\newtheorem{thm}{Theorem}[section]
\newtheorem{lemma}[thm]{Lemma}
\newtheorem{prop}[thm]{Proposition}
\newtheorem{defi}[thm]{Definition}
\newtheorem{remark}[thm]{Remark}
\newtheorem*{thm*}{Theorem}
\theoremstyle{definition}
\numberwithin{equation}{section}
\newcommand{\texto}[1]{\quad\mbox{#1}\quad}
\newcommand{\calX}{\mathcal X}
\newcommand{\ma}{\mathfrak{a}}
\newcommand{\g}{\mathfrak g}
\newcommand{\hh}{\mathbb{H}}
\newcommand{\lp}{\left(}
\newcommand{\rp}{\right)}
\newcommand{\doubleint}{\int\!\!\!\!\int}
\newcommand{\dist}{\text{dist}} %distance
\DeclareMathOperator{\divergence}{div}
\DeclareMathOperator{\Tr}{Tr}
\DeclareMathOperator{\Ad}{Ad}
\DeclareMathOperator{\re}{Re}
\DeclareMathOperator{\grad}{grad}
\DeclareMathOperator{\trace}{Trace}
\newcommand{\be}{\begin{equation}}
\newcommand{\ee}{\end{equation}}
\newcommand{\bee}{\begin{equation*}}
\newcommand{\eee}{\end{equation*}}
\newcommand{\bea}{\begin{eqnarray}}
\newcommand{\eea}{\end{eqnarray}}
\newcommand{\bs}{\begin{split}}
\newcommand{\es}{\end{split}}
\begin{document}

 \author{Gonz\'alez, Mar\'ia del Mar\footnote{Universidad Aut\'onoma de Madrid, Campus de Cantoblanco,
Departamento de Matem\'aticas, Madrid 28049, Spain. Email: mariamar.gonzalezn@uam.es.}\\
S\'aez, Mariel
\footnote{Pontificia Universidad Cat\'olica de Chile. Santiago
Avda. Vicu\~na Mackenna 4860, Macul, 6904441 Santiago, Chile. Email: mariel@mat.uc.cl. Author partially supported by Proyecto  Fondecyt  Regular 1150014}}

\title{Fractional Laplacians and extension problems: \\the higher rank case}
%\date{}
\maketitle

\abstract{The aim of this paper is two-fold: first, we look at the fractional Laplacian and the conformal fractional Laplacian from the general framework of representation theory on symmetric spaces and, second, we construct new boundary operators with good conformal properties that generalize the fractional Laplacian using an extension problem in which the boundary is of codimension two.}

\section{Introduction}

The standard fractional Laplacian on $\mathbb R^n$, $(-\Delta)^\gamma$, is a pseudo-differential operator with Fourier symbol given by $|\xi|^{2\gamma}$. For $\gamma\in(0,1)$, it is well known (see \cite{Caffarelli-Silvestre}, for instance) that it can be constructed as the Dirichlet-to-Neumann operator for the following extension problem in $\mathbb R^{n+1}_+=\{(x,y)\,:\,x\in\mathbb R^{n},y>0\}$:
\begin{equation}\label{div}\left\{\begin{split}
-\divergence \lp y^{1-2\gamma} \nabla U\rp U &=0\quad \mbox{in }\mathbb R^{n+1}_+, \\
U|_{y=0}&=f\quad \mbox{on }\mathbb R^n,
\end{split}\right.\end{equation}
Indeed, we can compute
\begin{equation}\label{fractional-Laplacian}
(-\Delta_{\mathbb R^n})^\gamma f =c_{\gamma} \lim_{y\to 0}y^{1-2\gamma} \partial_y U,
\end{equation}
for the constant
${c}_{\gamma}=-\frac{2^{2\gamma-1}\Gamma(\gamma)}{\gamma\Gamma(-\gamma)}$.

But, as it was pointed out in \cite{Chang-Gonzalez} (see also the survey \cite{Gonzalez:survey}), the elliptic problem \eqref{div} is equivalent to the following: let $\mathbb H^{n+1}$ be the hyperbolic space understood as the semispace $\mathbb R^{n+1}_+$ with the metric $g^+=\frac{dy^2+|dx|^2}{y^2}$. The conformal infinity (or boundary at infinity) is given by $\{y=0\}\simeq\mathbb R^n$. Set $f$ a smooth enough function on $\mathbb R^n$, and $s\in \mathbb C$ with $Re \, s>\frac{n}{2}$, $s\not\in \frac{n}{2}+\mathbb N$. Then there is a unique solution to the scattering equation
\begin{equation}\label{equation0}-\Delta_{g^+} u-s(n-s) u=0,\quad \text{in }\mathbb H^{n+1}\end{equation}
with the asymptotic expansion near $y=0$
$$u=y^{n-s}F+y^s G,\quad\text{for some }F,G\in\mathcal C^{\infty}(\overline{\mathbb H^{n+1}}),$$
where $F$ satisfies the boundary condition $F|_{y=0}=f$. The relation to \eqref{div} comes from the conformal relation of the hyperbolic metric $g^+$ to the Euclidean one, after the changes $u=y^{n-s}U$  and $s=\frac{n}{2}+\gamma$.
Moreover, the scattering operator on $\mathbb R^n$ is defined by
\begin{equation}\label{scattering-introduction}
  \mathcal S_{\mathbb R^n}(s)f=G|_{y=0},
\end{equation}
and it can be easily shown that it is a multiple of the fractional Laplacian \eqref{fractional-Laplacian} for $s=\frac{n}{2}+\gamma$. Indeed,
$$(-\Delta)^\gamma=2^{2\gamma}\frac{\Gamma(\gamma)}{\Gamma(-\gamma)} \mathcal S_{\mathbb R^n}\big(\tfrac{n}{2}+\gamma\big).$$

Thus we see how the extension problem \eqref{div}-\eqref{fractional-Laplacian} for the construction of the fractional Laplacian can be placed into the general framework of scattering theory on hyperbolic spaces, or more generally, on asymptotically hyperbolic manifolds, which has received a lot of attention (\cite{Mazzeo-Melrose,Graham-Zworski,Joshi-SaBarreto,Baum-Juhl}, and many others). The scattering operator has the important property of {\it conformal covariance}. More precisely, this property is that if we change the background metric conformally, say from $h$ to $h_w=e^{2w}h$, then the scattering operators with respect to each metric are related by a simple \emph{intertwining} rule of the form
\begin{equation}\label{intertwining-relation}\mathcal S_{h_w}(s)f=w^{a} \mathcal S_{h}(s)(fw^b)\end{equation}
for suitable real numbers $a,b$. In particular, scattering operators in this setting are also called intertwining operators.

On the other hand,  hyperbolic space is just the first example of a symmetric space of rank one. Naively speaking, the rank of a symmetric space is the codimension of its boundary at infinity, which in the case of the upper half-space model for hyperbolic space is just $\mathbb R^n$. There is a well established theory of scattering operators on symmetric spaces of any rank in the area of representation theory. However, this theory traditionally  has not been made available to the general PDE community. The first aim of this paper is to gather all the necessary results from representation theory to construct intertwining operators as Dirichlet-to-Neumann operators for an extension problem related to \eqref{div} when the boundary has any codimension. This is the content of Sections \ref{background}, \ref{joint-eigenspaces} and \ref{section-intertwining}.

These results rely on the particular group structure of a symmetric space. Thus if one wishes to construct a scattering operator on a more general setting, such an asymptotically product manifold (defined rigorously in Section \ref{section:asymptotically-product}), then one needs to provide a new direct proof.

Thus our second aim is to look carefully at the product case, which is of rank two, but it already contains the difficulties of the general higher rank case. This provides a construction for a scattering operator as the Dirichlet-to-Neumann operator for a suitable extension problem
 in the spirit of \eqref{div} but involving two extension variables $y_1,y_2$. This operator is conformally covariant in the sense of \eqref{intertwining-relation}, but it depends on two parameters $\gamma_1,\gamma_2$. We give a new direct proof that does not depend on the general symmetric space structure, so that it is easier to adapt to the curved case (on more general asymptotically hyperbolic manifolds). This is done mostly in Section \ref{section:asymptotically-product}.

There are already several works dealing with the resolvent operator for \eqref{equation0} on higher rank symmetric spaces, and its geometric properties \cite{Mazzeo-Vasy:noncompact,Biquard-Mazzeo:parabolic}, especially in relation to quantum $n$-body particle scattering. In the specific product case, we look at the relation to the scattering operator constructed in \cite{Biquard-Mazzeo:products,Mazzeo-Vasy:products,Huang:thesis}, which does not satisfy the conformal covariance property but it has been an inspiration for our work. The difference from their construction to ours comes from the distinction between weakly and strongly harmonic functions (see, for instance, the survey \cite{Koranyi:survey}), and the difference between the Martin boundary and the geometric boundary. This  is explored in Section \ref{section:Martin-boundary}.

Some technical difficulties come from the fact that the extension problem \eqref{div} becomes a system in the higher rank case. This is because a single equation of the type \eqref{equation0} does not have a unique solution when we impose a Dirichlet data $f$ on a submanifold of codimension higher than one. The trick is to decide which of these solutions is the suitable one for our construction, which will be decided from a system of equations.

The classical theory for such systems in the particular case of a  symmetric space is very old (see \cite{KKMOOT}), and it applies to boundary problems of systems of differential equations with regular singularities. It was developed for the calculation of joint eigenvalues of differential operators on symmetric spaces in order to solve a conjecture by Helgason \cite{Helgason:duality}. Note that another recent work, but in a different context, where systems have been used to understand a combination of fractional Laplacians is \cite{Cabre-Serra}.

Symmetric spaces of rank one have been completely understood from this point of view, since their study relies on ODE theory. An concrete example in the complex case can be found in \cite{Frank-Gonzalez-Monticelli-Tan}, where the authors construct the CR fractional Laplacian on the Heisenberg group as the conformal infinity of the complex hyperbolic space. They also give the energy formulation of this construction.

In the future we hope to explore other non-product symmetric spaces of rank bigger than one and the generalization to the curved case, always from the analytic point of view. A  first approach on this direction can be found in
\cite{Mazzeo-Vasy:SL3,Mazzeo-Vasy:SL3bis} where the authors study  the resolvent operator on
$SL_3(\mathbb R)/SO(3)$. However, it remains to characterize the conformally covariant boundary operators in this case.

The organization of the paper is the following: in Section 2 we review some classical background on representation theory of symmetric spaces. We include two very elementary subsections that develop explicitly the case of hyperbolic space for illustrative purposes, and that are aimed towards the unfamiliar reader. Experts can  directly to the next section. Section 3 and 4 contain  the construction of the scattering operators in the higher rank case, including the Poisson kernel and the study of systems of differential equations with regular singularities. Our main novel contributions come in Sections 5 and 6. In Section 5 we start developing the setting for the product of two hyperbolic spaces, while in Section 6 we provide the construction of the scattering operator in some asymptotically product manifolds.

Since this paper is aimed for the PDE community, we have tried to be self-contained as much as possible. In particular, Sections 5 and 6 do not require any representation theory background.

\section{Background} \label{background}

Here we cover classical concepts of Lie groups and representation theory. Basic references on representation theory of Lie groups are, for instance, \cite{Fulton-Harris,Knapp}, and more specifically on symmetric spaces \cite{Borel:book} or the three books by Helgason \cite{Helgason:libro2,Helgason:libro,Helgason:libro3}. The unfamiliar reader may read first Subsection \ref{subsection:hyperbolic-plane}, which deals with the simplest model: the hyperbolic plane and, in particular, the half-space model.

\subsection{The Iwasawa decomposition in representation theory}

Let $\mathcal X$ be a Riemannian symmetric space of the noncompact type. That is $\mathcal X=\mathcal G/K$, where $\mathcal G$ is a  connected semisimple Lie group with finite center and $K$ is a maximal compact subgroup of $\mathcal G$. Let $\mathfrak{g}$ be the Lie algebra associated to  $\mathcal G$ and let $\mathfrak k\subset\mathfrak g$ be the Lie algebra associated to $K$. We denote by $\exp$ the usual exponential map $\exp:\mathfrak g\to \mathcal G$, and by $\log$ its inverse.

A more geometric, but equivalent definition of a symmetric space comes by demanding that, at every point $x$ in a Riemannian manifold $\mathcal X$ there exists a global isometry $i_x$ such that $i_x$ reverses the geodesics passing through $x$.

The adjoint representation on the Lie group is defined by the map $\mathcal G\to \mbox{Aut} (\mathcal G)$ that for each $g\in \mathcal G$ it associates the automorphism
$\Ad_g(h)=ghg^{-1}$. This adjoint representation induces the adjoint representation on the Lie algebra $\mathfrak g$ by considering its derivative at the identity, so for each element $X\in\mathfrak g$, one has
$$\Ad(X)(H)=[X,H], \mbox{ for all } H\in\mathfrak g.$$
We denote by $\langle\cdot,\cdot\rangle$ the Killing form on $\mathfrak g$ given by  $$\langle X,Y\rangle=\Tr(\Ad(X)\circ \Ad(Y)).$$

It is possible to prove that
there exists a Cartan involution $\theta:\mathfrak g\to\mathfrak g$ with fixed point algebra $\mathfrak k$, which means that $\theta$ is an involutive automorphism such that the bilinear form on $\mathfrak{g}$ given by $(X,Y)\mapsto -\langle X,\theta Y\rangle$ is positive definite and such that the $+1$ eigenspace of $\theta$ coincides with $\mathfrak k$. Then the decomposition of $\mathfrak g$ into $+1$ and $-1$ eigenspaces reads
$$\mathfrak g=\mathfrak k\oplus \mathfrak p,$$
where $\mathfrak p\subset \mathfrak g$ is precisely the orthogonal complement with respect to the Killing form. We give $\mathcal X$ the Riemannian structure induced by the Killing form $\langle \cdot,\cdot\rangle$ restricted to $\mathfrak p$.

%{\color{red} If $\mathfrak g_{\mathbb C}$ is the complexification of $\mathfrak g$ then $\mathfrak g_c =\mathfrak k +i\mathfrak p\subset \mathfrak g_{\mathbb C}$ is a compact real form of $\g_{\mathbb C}$. }

Fix a maximal abelian subspace $\mathfrak a$ of $\mathfrak p$; all such subspaces have the same dimension which is called the real rank of $\mathcal X$, and will be denoted by $r$ in the rest of the paper. Then $\exp(\mathfrak a)$ is a totally geodesic flat submanifold in $\mathcal X$ which is maximal with respect to this property. Let $\ma^*$ be its dual and $\mathfrak a_{\mathbb C}^*$ the complexification of $\ma^*$.
Since $\ma$ is abelian, there is a simultaneous diagonalization for the commuting family of symmetric homomorphisms $\Ad(H)$ where $H\in\ma$ and we may define the set of restricted roots $\Lambda(\mathfrak g,\mathfrak a)$ with respect to $\mathfrak a$ as the set of nonzero linear functionals $\alpha\in\mathfrak a^*$ such that a simultaneous eigenvector $X$ satisfies
$$[X,H]=\alpha(H)X \texto{for every} H\in\ma.$$
The space of such eigenvectors $X$ associated to each $\alpha\in\Lambda(\mathfrak g,\mathfrak a)$ is the root space $\g_\alpha$, and its dimension, denoted by $m_\alpha$, is known as multiplicity of the restricted root. If $\alpha,\tilde\alpha\in \ma^*_{\mathbb C}$, let $H_\alpha\in \ma_{\mathbb C}$ be determined by \begin{equation}\label{H-alpha}
\alpha (H)=\langle H_\alpha,H\rangle,\quad \text{for all} \quad H\in\ma,
\end{equation}
 and set $\langle \alpha,\tilde\alpha\rangle=\langle H_\alpha,H_{\tilde\alpha}\rangle$. Since $\langle \cdot,\cdot\rangle$ is positive definite on $\mathfrak p$ we write $|\alpha|=\langle \alpha,\alpha\rangle^{1/2}$ for $\alpha\in\ma^*$ and $|X|=\langle X,X\rangle^{1/2}$ for $X\in \mathfrak p$.

%Moreover, since $\theta(\mathfrak g_\alpha)=\mathfrak g_{-\alpha}$, the set $\Lambda(\mathfrak g,\mathfrak a)$ is invariant under $\alpha\mapsto -\alpha$. Also, if $\alpha,c\alpha\in \Lambda^+$ for some $c\in\mathbb R$, then $c=1/2$ or 2.

A point $H\in\mathfrak a$ is called regular if $\alpha(H)\neq 0$ for all $\alpha\in\Lambda(\mathfrak g,\mathfrak a)$, otherwise singular. We denote the set of regular points by  $\mathfrak a'\subset\mathfrak a$. Additionally,
each root $\alpha$ defines a hyperplane
$\{\alpha(H)=0\}$. These  hyperplanes divide the space into finitely many connected components which are  called
Weyl chambers.
Fix a Weyl chamber $\ma^+$ and call a restricted root $\alpha$ positive if it has positive values on $\ma^+$. We denote by $\Lambda^+(\mathfrak g,\mathfrak a)$ the subsystem of positive restricted roots.

A root $\alpha\in\Lambda^+(\g,\ma)$ is called simple if it is not a sum of two positive roots. Let $\{\alpha_1,\ldots,\alpha_r\}$ be the set of simple roots, and $\{H_1,\ldots,H_r\}$ be its dual basis in $\mathfrak a$. Note that the walls of the Weyl chamber $\ma^+$ lie on the hyperplanes $\{\alpha_1=0\},\ldots,\{\alpha_l=0\}$ and thus
  $$\ma^+=\{H\in\ma\,: \, \alpha_1(H)>0,\ldots,\alpha_r(H)>0\}.$$
Let $\mathfrak a^*$ be ordered lexicographically with respect to this basis $\{\alpha_1,\ldots,\alpha_r\}$. We set
\begin{equation}\rho=\tfrac{1}{2} \sum_{\alpha\in \Lambda^+(\mathfrak g,\mathfrak a)} m_\alpha \alpha\in \mathfrak a^*.\label{defrho}\end{equation}

Since $\mathfrak a\subset \mathfrak p$ and $\mathfrak a^+\subset \mathfrak a'$ were chosen freely, all such choices are conjugate under the adjoint action of $K$ on $\mathfrak p$. The linear transformations of $\ma$ induced by those of members of $K$ which leave $\mathfrak a$ invariant constitute the Weyl group $W$. More precisely, let $A=\exp \ma $, $A^+=\exp{\ma^+}$ and $\overline{A^+}$ the closure of $A^+$ in $\mathcal G$.
Let $M$ be the centralizer of $A$ in $K$, $M'$ the normalizer of $A$ in $K$, and define the Weyl group to be $W:=M'/M$. The Weyl group acts as a group of linear transformations on $\ma$ and also on $\ma_{\mathbb C}^*$ by
$$(\omega\lambda)(H)=\lambda(\omega^{-1}H),\quad \mbox{for } H\in\ma,\lambda\in\ma_{\mathbb C}^*, \omega\in W.$$

The Cartan decomposition of $\mathcal G$ is
 $$\mathcal G=K\overline{A^+}K,$$
that is, each $g\in \mathcal G$ can be written as $g=k_1a^+k_2$, where $k_1,k_2\in K$ and $a^+\in \overline{A^+}$. Moreover, such  $a^+=a^+(g)$ is unique. We write $a^+(g)=\exp A^+(g)$ where $A^+(g)\in \overline{\mathfrak a^+}$.

The Iwasawa decomposition of $\mathfrak g$ is obtained as follows: let $\mathfrak n\subset \mathfrak g$ be the nilpotent Lie subalgebra
$$\mathfrak n=\sum_{\alpha\in\Lambda^+(\mathfrak g,\mathfrak a)}  \g_\alpha,$$
then
$$\mathfrak g=\mathfrak k\oplus \mathfrak a\oplus\mathfrak n.$$
Now let $A$ and $N$ be the analytic subgroups of $\mathcal G$ with Lie algebras $\mathfrak a$ and $\mathfrak n$ respectively. As a consequence, one may define the Iwasawa decompositions of $\mathcal G$ as
$$\mathcal G=K A N,\quad \mathcal G=N A K.$$
 For $g\in \mathcal G$ we denote by $H(g)\in\mathfrak a$ the unique element such that $g$ can be expressed as
\begin{equation}\label{Iwasawa1}g=k_1\exp H(g)\,n_1\end{equation}
with $k_1\in K$ and $n_1\in N$, and by $A(g)\in\ma$ the unique element such that $g=n_2\exp A(g)k_2$,
with $k_2\in K$, $n_2\in N$. Note that $A(g)=-H(g^{-1})$.\\

One may also obtain  a polar coordinate decomposition of the symmetric space $\mathcal X=\mathcal G/K$ as follows: denote by $\mathcal B$ the compact homogeneous space $K/M$. This $\mathcal B$ is known as the {\em distinguished boundary} of $\mathcal X$ or as the {\em Furstenberg boundary}.  Let $A'=\exp{\ma'}$ (where as before $\ma'$ is the regular part of $\ma$) and $\mathcal G'=KA'K$, which is an open dense subset of $\mathcal G$. Let $\mathcal X'=\mathcal G'/K$ be the regular part of $\mathcal X$.
 It is well known that $K/M\cdot A'$ is a covering of order $\omega$ of the regular set $\mathcal X'$, where $\omega$ is the order of the Weyl group. Then, the polar decomposition is
 $$\mathcal X'=\mathcal B\cdot A^+.$$
From here we see that $\mathcal B$ is understood as an edge, and $A^+$ an extension in $r$ variables, in which $\mathfrak a^+$ has a basis $\{H_1,\ldots,H_r\}$, the dual of the simple roots $\{\alpha_1,\ldots,\alpha_r\}$) defined in \eqref{H-alpha}.
%From the Iwasawa decomposition one observes that the  symmetric space $\mathcal X = G / K$ can be identified with the solvable group $S = N A$. From this identification one writes horocyclic coordinates for a point $x\in \mathcal X$. Indeed, if $x= gK = na K$, its horocyclic coordinates are $n$ and $a$. \textcolor{red}{needed?}
Under this identification we can set up the following notation: we  parameterize $A$ considering the inverse map for $t\in\mathbb R^r_+=\{t\in\mathbb R^r : t_j>0,j=1,\ldots r\}$, given by
$$a_t:=\exp \Big( -\sum_{j=1}^r (\log t_j)H_j\Big)\in A,$$
and note that $A^+$ is the restriction to $t_j\in(0,1)$, $j=1,\ldots,r$.
We denote
\begin{equation}\label{powers}a^{\nu}=\exp \{\nu (\log(a))\},\end{equation}
for $\nu\in\mathfrak a_{\mathbb C}^*$, $a\in A$.

Now we may define a horocycle in $\mathcal X$ as any orbit $N'\cdot z$ where $z\in\mathcal X$ and $N'$ is a subgroup of $\mathcal G$ conjugate to $N$. Each horocycle is a closed submanifold of $\mathcal X$ and is orthogonal to the manifold $A\cdot o$ at $o$. %The group $G$ acts transitively on the set of horocycles in $\mathcal X$ and the subgroup of $G$ %which maps the horocycle $\xi_0$ (the orbit $N\cdot x$) onto itself equals precisely $MN$.
It can be written as $\xi=kaMN$ where $b=kM\in \mathcal B$ and $a\in A$ are unique. We say the Weyl chamber $kM$ is normal to $\xi$ and call $\log a$ the composite distance from the origin $o$ to $\xi$.

Given any $b\in \mathcal B$ and $z\in \mathcal X$, there exists a unique horocycle $\xi(z,b)$ passing through the point $z$ with normal $b$. We denote by $A(z,b)\in \ma$ the composite distance from  $o$ to $\xi(z,b)$, which  is calculated as
\begin{equation}A(z,b)=-H(g^{-1}k),\label{composite distance}\end{equation}
if $z=gK\in\mathcal X$ and $b=kM\in \mathcal B$,
where $H$ is given in \eqref{Iwasawa1}.\\

It is possible to observe that $\mathcal B\simeq \mathcal G/P$, where $P=MAN$ is a minimal parabolic subgroup. Thus the boundary at infinity of the symmetric space $\mathcal X$ is precisely the Furstenberg boundary $\mathcal B$, which can be given a parabolic geometry structure inherited from the group $\mathcal G$ (see, for instance, \cite{Biquard-Mazzeo:parabolic}). Note also that if the dimension of $\mathcal B$ is $n$, then the dimension of $\mathcal X$ is $n+r$. For a very good introduction to parabolic geometries, see the book \cite{Cap-Slovak}.

\subsection{The model case of the Hyperbolic Plane} \label{subsection:hyperbolic-plane}

This subsection contains standard material and  details may be found in \cite{Sugiura} and \cite{Conrad}. The hyperbolic plane (understood as the upper half-  plane $\mathbb H^2$) may be realized as a symmetric space of rank 1 in the following way: let
$$\mathcal G=SL_2(\mathbb R)=\left\{\begin{pmatrix}a & b\\ c & d\end{pmatrix} \,:\, ad-bc=1\right\},$$
which acts transitively on $\mathbb H^2$ by
$$\begin{pmatrix}a & b\\ c & d\end{pmatrix}\cdot z=\frac{a z+b}{c z+d}, \quad z\in \mathbb H^2.$$
Consider $K$ as the subgroup that leaves $i$ invariant. More precisely,
$$K=\left\{\begin{pmatrix}\cos \theta & -\sin \theta\\ \sin\theta & \cos\theta\end{pmatrix} \,:\,\theta\in\mathbb R\right\}=SO(2).$$
Then the hyperbolic plane can be understood  as $\mathcal{X}=\mathcal G/K$.

We consider the subgroups
\begin{equation*}
A=\left\{\begin{pmatrix}r & 0\\ 0 & 1/r\end{pmatrix}\,:\, r>0\right\},\qquad
N=\left\{\begin{pmatrix}1 & x\\ 0&  1\end{pmatrix}\,:\, x\in\mathbb R\right\}.
\end{equation*}
Then the Iwasawa decomposition $\mathcal G=KAN$ of an element in  $\mathcal G$ is given by
\begin{equation*}
\begin{pmatrix}a & b\\ c & d\end{pmatrix}=\begin{pmatrix}\cos \theta & -\sin \theta\\ \sin\theta & \cos\theta\end{pmatrix}\begin{pmatrix}r & 0\\ 0 & 1/r\end{pmatrix}
\begin{pmatrix}1 & x\\ 0&  1\end{pmatrix}
\end{equation*}
where
\begin{equation*}
  r=\sqrt{a^2+c^2},\quad \cos\theta=a/r,\quad \sin \theta=\frac{c}{r},\quad x=\frac{ab+cd}{a^2+c^2}.
\end{equation*}
Note that $AN=NA$ and that
$$AN=\left\{\begin{pmatrix}y & x \\ 0 & 1/y\end{pmatrix}\,:\, y>0,\,x\in\mathbb R\right\}.$$
The hyperbolic plane is recovered as the orbit of the action over $i$
\begin{equation*}
\begin{pmatrix}\sqrt y & x/\sqrt y \\ 0 & 1/\sqrt y\end{pmatrix}\cdot i=x+i y \in\mathbb H^2,
\end{equation*}

Before we continue, let us recall some basic formulae: for each $H_1,H_2\in \mathfrak a$, a scalar product (the Killing form) is given by
$$\langle H_1,H_2 \rangle=4\trace(H_1 H_2).$$
However, since it is customary to write the metric in $\mathbb H^{2}$ as
$$g^+=\frac{dy^2+|dx|^2}{y^2},$$
one should consider instead
$$\langle H_1,H_2 \rangle=2\trace(H_1 H_2).$$
Without loss of generality, this is the normalization we will use for this model, at the expense of introducing/removing factors of 2. The Laplace-Beltrami operator with respect to this metric is
$$\Delta_{g^+}=y^2 \lp \partial_{yy}+\partial_{xx}\rp.$$

The Lie algebras associated to $G$ and $A$ are
$$\mathfrak g=\left\{\begin{pmatrix}a & b \\ c &-a\end{pmatrix}\,:\, a,b,c\in\mathbb R\right\},$$
$$\mathfrak{a}=\mathbb{R}H_0\quad \text{for}\quad H_0=\begin{pmatrix}1/2 & 0\\ 0 & -1/2\end{pmatrix}.$$
Note that $A$ is abelian, and in particular its Lie algebra may be be identified with $\mathbb R$ (this is, $\mathbb H^2$ is a symmetric space of rank one).

The roots $\alpha_{\pm}$can be easily calculated. Since $\{H_0\}$ is a basis for $\mathfrak a$ with $\langle H_0,H_0\rangle=1$, it is enough to define
$$\alpha_{\pm}(H_0)=\pm 1, \quad \alpha_{\pm}(H)=\pm\langle H,H_0\rangle,\, \text{ for each }H\in \mathfrak a,$$
so there are only two Weil chambers, related by a reflection $\omega$ across the origin.  In this case, the Weyl group consists of two elements $W=\{e,\omega\}$. Moreover,
$$\rho (H_0)=\tfrac{1}{2} \,\alpha_+ (H_0)=\tfrac{1}{2}.$$
In this case, for $y>0$, we can parametrize $A$ by elements
$$a_y=\begin{pmatrix}
1/\sqrt{y} & 0 \\
0 & \sqrt{y}
\end{pmatrix}.$$
Now for every $s\in \mathbb C$ we can define
$\lambda\in \mathfrak a_{\mathbb C}^*$, $\lambda=\lambda_s$, by
$$\lambda(H_0)=s-\tfrac{1}{2}:=\gamma.$$

Here we present the Poincar\'e disk model $\mathbb D=\{z\in\mathbb C \,:\, |z|<1\}$ for the hyperbolic plane.
Let
$$\mathcal G=SU(1,1)=\left\{\left(\begin{array}{cc}\alpha & \beta\\ \bar \beta & \bar \alpha\end{array}\right) : \alpha,\beta\in\mathbb C, |\alpha^2|-|\beta|^2=1\right\},$$
which acts transitively on $\mathbb DD$ by
$$\left(\begin{array}{cc}\alpha & \beta\\ \bar \beta & \bar \alpha\end{array}\right)\cdot z=\frac{\alpha z+\beta}{\bar{\beta} z+\bar\alpha}, \quad z\in \mathbb D.$$
Consider $K$ as the subgroup that leaves the origin $o$ invariant. More precisely,
$$K=\left\{\left(\begin{array}{cc}\alpha & 0\\ 0 & \bar \alpha\end{array}\right):|\alpha|=1\right\}=SO(2).$$
Then the hyperbolic disk can be realized as $\mathcal G/K$, with the metric
$g^+=\frac{4|dz|^2}{(1-|z|^2)^2}.$
We denote
\begin{eqnarray*}
%&K=\left\{\left(\begin{array}{cc}\alpha & 0\\ 0 & \bar \alpha\end{array}\right):|\alpha|=1\right\}=SO(2),\\
&A=\left\{\left(\begin{array}{cc}\cosh \frac{\tau}{2} & \sinh  \frac{\tau}{2}\\ \sinh  \frac{\tau}{2} & \cosh  \frac{\tau}{2}\end{array}\right): \tau\in\mathbb R\right\},\\
&N=\left\{\left(\begin{array}{cc}1+i \frac{\sigma}{2} & -i\frac{\sigma}{2}\\ i\frac{\sigma}{2} & 1-i\frac{\sigma}{2}\end{array}\right):\sigma \in\mathbb R\right\}.
\end{eqnarray*}
Then the Iwasawa decomposition of an element in  $\mathcal G$ is given by
$$\left(\begin{array}{cc}\alpha & \beta\\ \bar \beta & \bar \alpha\end{array}\right)=\left(\begin{array}{cc}\frac{\alpha+\beta}{|\alpha+\beta|} & 0\\ 0 &\frac{\bar{\alpha}+\bar{\beta}}{|\alpha+\beta|}\end{array}\right)
\left(\begin{array}{cc}\cosh \frac{\tau}{2} & \sinh  \frac{\tau}{2}\\ \sinh  \frac{\tau}{2} & \cosh  \frac{\tau}{2}\end{array}\right)\left(\begin{array}{cc}1+i \frac{\sigma}{2} & -i\frac{\sigma}{2}\\ i\frac{\sigma}{2} & 1-i\frac{\sigma}{2}\end{array}\right),$$
where
$e^{t}=|\alpha+\beta|$ and $\sigma=4\frac{\textrm{Im }\alpha\bar{\beta} }{|\alpha+\beta|}.$
%este calculo esta Unitary representations and harmonic analysis de Sugiura
The Iwasawa decomposition allows us to decompose any isometry of $\mathbb D$ as the product of at most three
elements in the groups $K$, $A$ and $N$. Then, it is possible to express each point $z\in \mathbb ED$ as an image of the
origin $o$ by some combination of elements in $K$, $A$ and $N$.

Now we compute the tangent spaces of these subgroups at the origin and denote them by $\mathfrak{g}$, $\mathfrak{k}$, $\mathfrak{a}$
and $\mathfrak{n}$ respectively. A direct computation implies that
$$\mathfrak{a}=\mathbb{R} \left(\begin{array}{cc}0 & 1\\ 1 & 0\end{array}\right).$$
As before, there are two roots $\{\alpha_{\pm}\}$. With some abuse of notation, we write
$\rho=1/2$. Now for every $s\in \mathbb C$ we can define
$\lambda\in \mathfrak a_{\mathbb C}^*$, $\lambda=\lambda_s$, by
$$ \lambda  \left(\left(\begin{smallmatrix}0 & 1\\ 1 & 0\end{smallmatrix}\right)\right)=s-\tfrac{1}{2}=:\gamma,$$
so for $\mathbb H^2$ many times we will identify $\lambda\in\mathfrak a_{\mathbb C}^*$ and $\gamma\in\mathbb R$.

In this model the hyperbolic distance has a very simple interpretation. Indeed,
$$\textrm{dist}\left( a_\tau, o\right)=\tau,$$ where
$$a_\tau=\left(\begin{array}{cc}\cosh \frac{\tau}{2} & \sinh  \frac{\tau}{2}\\ \sinh  \frac{\tau}{2} & \cosh  \frac{\tau}{2}\end{array}\right).$$

\subsection{Poisson kernels for  Hyperbolic Space and the scattering operator}\label{subsection:hyperbolic-space}

The objective of this section is to illustrate some of the key ideas in this paper for a simple model before getting into technicalities. References are, for example, the books \cite{Helgason:libro3} and \cite{Baum-Juhl}, but there are many others.

We start with the half-plane model for $\mathbb H^2$. Consider the scattering equation
$$-\Delta_{g^+} u-s(1-s)u=0 \quad\text{in }\mathbb H^2,$$
with Dirichlet-type data $f$, and let $s=\frac{1}{2}$. As we have mentioned in the Introduction,
there exists a unique solution with expansion near $\{y=0\}$ of the form
\begin{equation}\label{equation50}u=y^{\frac{1}{2}-\gamma} F+y^{\frac{1}{2}+\gamma} G, \end{equation}
where $F|_{y=0}=f$, $F,G$ smooth up to the boundary.

In the notation we have introduced in the previous subsection,
\eqref{equation50} is a simple way of writing
$$u=a_y^{(\lambda-\rho)(H_0)} F+a_y^{(-\lambda-\rho)(H_0)} G,$$
taking into account \eqref{powers}. We will many times perform this identification without further reference.\\

Let us look at the scattering equation in a general dimension, where we give the different models for hyperbolic space for convenience of the reader. \\

\noindent\emph{Half-space model.} Take
$$\mathbb H^{n+1}=\{ (x,y) \,: x\in\mathbb R^{n}, y>0\},$$
with the metric
$$g^+=\frac{dy^2+|dx|^2}{y^2}.$$ The boundary $\partial\mathbb H^{n+1}=\{y=0\}$ is identified with $\mathbb R^{n}$ with its Euclidean metric $|dx|^2$, and it is known as the conformal infinity.

We recall that the Laplace-Beltrami operator in $\hh^{n+1}$ is given by
$$\Delta_{g^+}=\left\{y^2 \Delta_{x,y}-(n-1)y\partial_y\right\}.$$
This operator is invariant under the action of the group $\mathcal G$. Moreover, it is possible to prove that all operators that are invariant under the action of $\mathcal G$ are generated by $\Delta_{\mathbb H^{n+1}}$ (see Section \ref{Section:joint-eigenvalues}) and, in particular, the joint eigenvalues for all such operators are determined by the eigenvalues of the Laplacian:
\begin{equation}\label{joint-eigenspaces-hyperbolic}-\Delta_{g^+} u-\mu u=0\quad \text{in }\mathbb H^{n+1}.\end{equation}
  We also recall that the
 $L^2$-spectrum  of $-\Delta_{g^+}$ is $[{n^2}/{4},\infty)$, so we will restrict to $\mu< {n^2}/{4}$, and denote
 \begin{equation}\label{gamma-nu}
\mu=s(n-s),\quad s=\tfrac{n}{2}+\gamma,\quad \gamma\in\big(0,\tfrac{n}{2}\big).
\end{equation}

To motivate the construction of spherical functions $\varphi_\gamma^{(n)}$ in later sections, let us construct particular solutions $\varphi$ to \eqref{joint-eigenspaces-hyperbolic} that depend only of the variable $y$, decaying at infinity, and normalized such that  $\varphi(0)=1$.
Then, considering the ``radial part" (i.e., the part that only depends on $y$) of the Laplace operator
$$R(\Delta_{g^+})=y^2 \partial_{yy}-(n-1)y\partial_y,$$
equation \eqref{joint-eigenspaces-hyperbolic} reduces to the simple ODE
\begin{equation}\label{ODE0}y^2 \partial_{yy}\varphi-(n-1)y\partial_y\varphi+\mu\varphi=0,\end{equation}
which has the explicit solution
\begin{equation}\label{spherical-hyperbolic}
\varphi(y)=y^{\frac{n}{2}} K_\gamma(y).\end{equation}
 Here $K_\gamma$ is the usual modified Bessel function.  This $\varphi$ is known as an elementary eigenfunction.

 One of the classical issues is to characterize all the eigenfunctions for \eqref{joint-eigenspaces-hyperbolic}. Taking into account the non-radial coordinates, one calculates the Poisson kernel for this problem:
\begin{equation}
\label{Poisson-hyperbolic}
P_\gamma^{(n)}(y, x):=y^{\frac{n}{2}-\gamma}\frac{y^{2\gamma}}{(|x|^2+y^2)^{\frac{n}{2}+\gamma}}, \quad x\in \mathbb R^{n},y>0.
\end{equation}
Remark that the minimal $\mu$-harmonic functions are obtained from the Poisson kernel \eqref{Poisson-hyperbolic}. Moreover, for each boundary value $f$ on $\mathbb R^n$, the solution of the eigenvalue equation \eqref{joint-eigenspaces-hyperbolic} can be expressed as the convolution
\begin{equation}\label{solution-u}u=P_\gamma^{(n)}(y,\cdot) *_x f,\end{equation}
In particular, from the asymptotics of the Bessel function $K_\gamma$ one knows that this solution \eqref{solution-u}
has the asymptotic expansion
\begin{equation}\label{scattering-asymptotics}u=y^{\frac{n}{2}-\gamma} F+y^{\frac{n}{2}+\gamma} G, \end{equation}
where $F,G$ are smooth functions up to the boundary $\partial\mathbb H^{n+1}$ with $F|_{y=0}=f$.

The scattering operator on $\mathbb R^n=\partial \mathbb H^{n+1}$ is defined as
\begin{equation}\label{scattering-one}\mathcal S_{\mathbb R^n}^\mu f:=G|_{y=0},\end{equation}
where $\mu$ and $\gamma$ are related by \eqref{gamma-nu}, and it can be understood as a Dirichlet-to-Neumann operator. Note that in the introduction we have used a different notation for the scattering operator \eqref{scattering-introduction}, but this new one is more suitable for the extension to the higher rank case.

From the asymptotics of \eqref{scattering-asymptotics} one can show that $\mathcal S^\mu_{\mathbb R^n}$ is a pseudo-differential operator which coincides with the fractional Laplacian $(-\Delta_{\mathbb R^n})^\gamma$. But its most important property is its conformal invariance: under a change of metric $\tilde h=e^{2w} h$ on $\mathbb R^n$, one has
$$\mathcal S_{\tilde h}^\mu(\cdot)=e^{-\frac{n+2\gamma}{2}w} \mathcal S_{h}^\mu(e^{\frac{n-2\gamma}{2} w}\,\cdot\,).$$

It is explained in \cite{Chang-Gonzalez,Gonzalez:survey} that problem \eqref{joint-eigenspaces-hyperbolic}-\eqref{scattering-asymptotics} is equivalent to the classical extension problem of \cite{Caffarelli-Silvestre} given by \eqref{div}-\eqref{fractional-Laplacian}. The equivalence follows from the relation $u=Uy^{\frac{n}{2}-\gamma}$; the underlying idea is to shift the point of view from the hyperbolic metric on the half-space $\mathbb R^{n+1}_+$ to the Euclidean one, since they are related by a simple conformal change. Thus there is an intrinsic link between fractional Laplacian operators and  representation theory of symmetric spaces of rank one. Moreover, this link is based in the study of the radial part of the scattering equation \eqref{joint-eigenspaces-hyperbolic}, which in this particular case reduces to \eqref{ODE0}.\\

\noindent\emph{Poincar\'e ball model}. As in the two-dimensional case, set
$$\mathbb H^{n+1}=\{ z\in \mathbb R^{n+1} \, : \, |z|< 1\}, \hbox{ with }n\geq 1,$$
with the Riemannian metric given by
$$g^+=\frac{4}{(1-|z|^2)^2}|dz|^2.$$
The boundary (or conformal infinity) $\partial \mathbb H^{n+1}$ is identified with the unit sphere $\mathbb S^{n}$.

Horocycles $\xi$ are are curves whose normal or perpendicular geodesics all converge asymptotically in the same direction. In the Poincar\'e ball model, these are circles tangent to the boundary $\mathbb S^n$. Let $b\in\mathbb S^n$ and consider a horocycle $\xi$ tangent to $\mathbb S^n$ at $b$. If $z$  is a point in $\xi$, we put $\langle z,b\rangle$:=distance from $o$ to $\xi$ (with sign). This is a non-Euclidean inner product.

Now, for $\mu\in\mathbb R$, consider the plane wave
$$e_{\gamma,b}:z\mapsto e^{(\gamma+\frac{n}{2})\langle z,b\rangle},\quad z\in \mathbb H^{n+1}.$$
Then one may check that $e_{\mu,b}$ is an eigenfunction for $-\Delta_{\mathbb H^{n+1}}$ with eigenvalue $\mu=\frac{n^2}{4}-\gamma^2$,
$$-\Delta_{\mathbb H^{n+1}} e_{\gamma,b}-\left(\tfrac{n^2}{4}-\gamma^2\right)e_{\gamma,b}=0.$$

 The theory of harmonic functions in this model is well known. In particular, the usual Poisson kernel is
 $$P(z,b)=\frac{1-|z|^2}{|b-z|^2},\quad z\in \mathbb{H}^{n+1}, b\in \partial \mathbb{H}^{N}.$$
In the notation above, the Poisson kernel may be written as
$$ P(z,b)=e^{\langle z,b\rangle},$$
while the Poisson kernel for \eqref{joint-eigenspaces-hyperbolic} (or the minimal $\mu$-harmonic functions in the sense of Definition \ref{def-minimal-harmonic}) is given by
\begin{equation}\label{Poisson-ball}
P^{\frac{n}{2}+\gamma}(\cdot,b),\quad b\in\mathbb S^{n},
 \end{equation}
under convention \eqref{gamma-nu}.

Given a smooth function $f$ on $\mathbb S^{n-1}$, a  $\mu$-harmonic function on $\mathbb H^{n+1}$ with boundary data $f$ may be written as
$$u(z)=\int_{\mathbb S^n} P^{\frac{n}{2}+\gamma}(\cdot,b)\,db.$$
In the case $\mu=0$, this reduces to the usual Poisson formula in a ball.
The spherical functions  have the form
$$\varphi_\gamma^{(n)}(z)=\int_{\mathbb S^n}P^{\frac{n}{2}+\gamma}(z,b)\,db,\quad z\in\mathbb H^{n+1}.$$

The proof of these facts is relies on the study of the radial part of \eqref{joint-eigenspaces-hyperbolic}. Historically this equation has usually been considered in a different set of coordinates (in the hyperboloid model for hyperbolic space), that we present next.\\

\noindent\emph{Hyperboloid model.} $\mathbb H^{n+1}$ may be defined as the upper branch of a hyperboloid in $\mathbb R^{n+2}$ with the metric induced by the Lorentzian metric in $\mathbb R^{n+2}$ given by $-dx_0^2+dx_1^2+\ldots+dx_{n+1}^2$, i.e., $$\mathbb H^{n+1}  =\{(x_0,\ldots,x_{n+1})\in \mathbb R^{n+2}: x_0^2-x_1^2-\ldots-x_{n+1}^2=1, \; x_0>0\}.$$ Here
$\mathbb H^{n+1}=\mathcal G/K$ for $\mathcal G=SO_e(n+1,1)$, $K=SO(n+1)$.
The relation to the previous model is given by the following change of variables:
$$(z_1,\ldots,z_{n+1})=\left(\frac{x_1}{1+x_0},\ldots,\frac{x_{n+1}}{1+x_0}\right).$$
In polar coordinates, $\mathbb H^{n+1}$ may be parameterized as
$$\mathbb H^{n+1}= \{x\in \mathbb R^{n+2}: x= (\cosh t, \sinh t \,\theta), \; t\geq 0, \; \theta\in \mathbb S^{n}\},$$
with the metric $g_{\mathbb H^{n+1}}=dt^2+\sinh^2 t\, g_{\mathbb S^n},$ where $g_{\mathbb S^n}$ is the canonical metric on $\mathbb S^{n}$.
Under these definitions the Laplace-Beltrami operator is given by
\begin{equation}\label{hyperbolic-Laplacian}\Delta_{\mathbb H^{n+1}}=\partial_{tt}+n \coth t\, \partial_t+\frac{1}{\sinh^2 t}\,\Delta_{\mathbb S^{n}}.\end{equation}
By construction, the (hyperbolic) distance between a point  $x=(\cosh t, \sinh t \,\theta)$ and the origin is precisely $t$.

Note that the scattering equation \eqref{scattering-introduction}, in the radial case, reduces to
$$\partial_{tt} \varphi+n \coth t\partial_t +\big(\tfrac{n^2}{4}-\gamma\big)\varphi=0.$$
After the change of variable $\sigma=-\sinh^2 t$ it can be rewritten as the standard hypergeometric equation
$$\sigma(\sigma-1)\varphi''(\sigma)+\big[(a+b+1)\sigma-c\big]\varphi'(\sigma)+ab\varphi(\sigma)=0$$
for
$$a=\tfrac{1}{2}\left(\tfrac{n}{2}+\gamma\right),\quad b=\frac{1}{2}\left(\tfrac{n}{2}-\gamma\right),\quad c=\tfrac{n+1}{2},$$
which has the explicit solution
$$\varphi(t)=\,_2F_1(a,b,c,-\sinh^2 t).$$
(Compare to \eqref{ODE-sol}). Here $\,_2F_1$ denotes the standard Hypergeometric function.

One may also calculate the Poisson kernel, the spherical functions and the scattering operator. In particular, the scattering operator on $\mathbb S^n$  (see \cite{Branson:sharp-inequalities}, or the survey \cite{Gonzalez:survey}) is given by a constant multiple of
$$\mathcal S_{\mathbb S^n}^\mu=\frac{\Gamma\lp A_{1/2}+\gamma+\tfrac{1}{2}\rp}{\Gamma\lp A_{1/2}-\gamma+\tfrac{1}{2}\rp}, \quad\text{where}\quad
A_{1/2}=\sqrt{-\Delta_{\mathbb S^n}+\lp\tfrac{n-1}{2}\rp^2}.$$

We conclude this section with a few words of another symmetric space, which is of rank two, and that will be briefly mentioned in the following. Consider
$\mathcal X=\mathcal G/K$ for $\mathcal G=SL_3(\mathbb R)$, $K=SO(3)$. Here $\mathfrak g$ consists of 3-matrices of trace zero, $\mathfrak k$ are skew-symmetric, and $\mathfrak p$ symmetric matrices.  Let $\mathfrak a$ be the subspace of diagonal matrices of trace zero, and denote these diagonal entries by $t_i,i=1,\ldots,3$, $t_1+t_2+t_3=0$.
This model has been considered in relation to quantum $3$-body scattering,  and specific references are, for instance,
\cite{Mazzeo-Vasy:SL3,Mazzeo-Vasy:SL3bis}.

\section{Differential operators and joint eigenspaces} \label{joint-eigenspaces}

After the introduction in the previous paragraphs for the hyperbolic space model, we move on by recalling the necessary results on the symmetric space case. The main idea is to develop the general theory to handle \eqref{joint-eigenspaces-hyperbolic}.

\subsection{Spherical functions}

The results of this section are classical and go back to the work of Harish-Chandra \cite{Harish-Chandra:sphericalI,Harish-Chandra:sphericalII} (see also the two survey papers \cite{Gangolli:spherical,Helgason:jewel}). However, we refer to the more modern exposition in
Helgason's books (\cite{Helgason:libro,Helgason:libro2,Helgason:libro3}).

% they go back to the work of Harish-Chandra \cite{Harish-Chandra:sphericalI,Harish-Chandra:sphericalII} (see also the two survey papers \cite{Gangolli:spherical,Helgason:jewel}).

Using the notation in the previous section, the Killing form $\langle \cdot,\cdot\rangle$ induces Euclidean measures on $A$, its Lie algebra $\ma$ and the dual space $\ma^*$. We multiply these measures by the factor $(2\pi)^{-r/2}$ and thereby obtain invariant measures $da$, $dH$ and $d\lambda$ on $A$, $\ma$ and $\ma^*$ respectively.
%This normalization has the property that the Fourier transform
%$$f^*(\lambda)=\int_A f(a)e^{-i\lambda(\log a)}\,da,\quad \lambda\in\ma^*,$$
%for $f\in\mathcal S(A)$, is inverted without a multiplicative constant, i.e.,
%$$f(a)=\int_{\ma^*} f^* (\lambda)e^{i\lambda(\log a)}\,d\lambda,\quad f\in\mathcal C^\infty_c(A).$$
%We normalized the Haar measures $dk$ and $dm$ on the compact groups $K$ and $M$, respectively, such that the total measure is 1.\\

A spherical function is a continuous function $\varphi\not\equiv 0$ on $\mathcal G$ satisfying the functional equation
$$\int_K \varphi(xky)\,dk=\varphi(x)\varphi(y),\quad x,y\in \mathcal G.$$
Such a function is necessarily bi-invariant under $K$. By the results of Harish-Chandra, these are completely classified:

 \begin{thm}[\cite{Harish-Chandra:sphericalI}]\label{thm:spherical}
 The spherical functions are precisely the functions of the form
\begin{equation}\label{spherical-function}\varphi_\lambda(g)=\int_K e^{(-\lambda-\rho)(H(g^{-1}k))}\,dk,\quad g\in \mathcal G,\end{equation}
where $\lambda\in \mathfrak a_{\mathbb C}^*$ is arbitrary,  $H$ is defined in \eqref{Iwasawa1} and $\rho$ in \eqref{defrho}. Moreover, two such functions are identical $\varphi_\lambda\equiv\varphi_{\tilde{\lambda}}$ if and only if $\lambda=\omega\tilde{\lambda}$ for some $\omega$ in the Weyl group $W$.
\end{thm}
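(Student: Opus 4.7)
The plan is to identify spherical functions with normalized joint eigenfunctions of the commutative algebra $\mathcal{D}(\mathcal{X})$ of $\mathcal{G}$-invariant differential operators on $\mathcal{X}=\mathcal{G}/K$, and then to classify such eigenfunctions. First, I would show that a continuous bi-$K$-invariant function $\varphi$ with $\varphi(e)=1$ is spherical iff $D\varphi = \chi(D)\,\varphi$ for every $D \in \mathcal{D}(\mathcal{X})$ and some character $\chi$. The forward implication comes from applying $D$ in the $y$-variable of $\int_K \varphi(xky)\,dk = \varphi(x)\varphi(y)$ and setting $y=e$; the converse uses Godement's characterization of the characters of the commutative convolution algebra of bi-$K$-invariant, compactly supported functions on $\mathcal{G}$.

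Next I would verify directly that the candidate $\varphi_\lambda$ is spherical. Continuity and bi-$K$-invariance follow from $K$-invariance of the Haar measure together with the cocycle identities satisfied by the Iwasawa projection $H$ of \eqref{Iwasawa1}, and $\varphi_\lambda(e)=1$ follows from $H(k)=0$ for $k\in K$. To obtain the functional equation I would expand both sides using Fubini and perform the change of variables $k\mapsto k'k$ on $K$, reducing matters to the additive cocycle $H(g_1 g_2) = H(g_1) + H(\kappa(g_1^{-1})\,g_2)$, where $\kappa$ denotes the $K$-component in the Iwasawa decomposition. This is Harish-Chandra's original calculation.

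For the converse, let $\varphi$ be spherical; by the first step it is a joint eigenfunction of $\mathcal{D}(\mathcal{X})$. The Harish-Chandra isomorphism identifies $\mathcal{D}(\mathcal{X})$ with the algebra of Weyl-invariant polynomials on $\mathfrak{a}_\mathbb{C}^*$, so the character $\chi$ is pinned down by a $W$-orbit $W\lambda\subset\mathfrak{a}_\mathbb{C}^*$. Using the Cartan decomposition $\mathcal{G}=K\overline{A^+}K$, bi-$K$-invariance restricts $\varphi$ to a function on $\overline{A^+}$ satisfying a system of ODEs with regular singularities on the walls of the Weyl chamber, with indicial exponents $\{-\omega\lambda-\rho:\omega\in W\}$; the normalization at the identity singles out a one-dimensional solution space. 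Since $\varphi_\lambda$ is itself such a normalized bi-$K$-invariant eigenfunction (by the first two steps), uniqueness forces $\varphi = \varphi_\lambda$. The final Weyl-invariance assertion then follows from the same uniqueness: $\varphi_\lambda$ and $\varphi_{\omega\lambda}$ produce the same $W$-invariant eigenvalue character and the same normalization, hence must coincide; conversely $\varphi_\lambda=\varphi_{\tilde\lambda}$ implies equality of eigenvalues on $\mathcal{D}(\mathcal{X})$, forcing $W\lambda = W\tilde\lambda$.

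The main obstacle is the uniqueness step: showing that the space of bi-$K$-invariant joint eigenfunctions with a given character $\chi$ is one-dimensional modulo the normalization at $e$. This rests on a careful analysis of the radial parts of the operators in $\mathcal{D}(\mathcal{X})$ on $\mathfrak{a}^+$, together with control of the behaviour of solutions across the walls of the Weyl chamber, where \emph{a priori} the dimension of the solution space could jump. In the rank one setting this is simply the uniqueness of the Bessel-type ODE solution underlying \eqref{spherical-hyperbolic}; in higher rank it is the classical theorem of Harish-Chandra relying on the series expansion in terms of the exponents $\{-\omega\lambda-\rho\}_{\omega\in W}$ and the Harish-Chandra $\mathbf{c}$-function.
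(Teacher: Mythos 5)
The paper does not prove Theorem \ref{thm:spherical}: it is stated as a classical result and attributed directly to Harish-Chandra (\cite{Harish-Chandra:sphericalI}), so there is no in-paper proof to compare against. Your sketch is the standard Harish-Chandra argument found in \cite{Helgason:libro3}, Chapter IV: identify spherical functions with normalized bi-$K$-invariant joint eigenfunctions via Godement's characterization of characters of the convolution algebra $\mathcal C_c^\infty(K\backslash\mathcal G/K)$, verify that $\varphi_\lambda$ is spherical via the cocycle identity for the Iwasawa projection $H$, and then use the Harish-Chandra isomorphism plus the radial ODE system on $\overline{A^+}$ (indicial exponents indexed by the Weyl orbit of $\lambda$) together with regularity at the origin to get uniqueness. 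The outline is essentially correct, and you correctly flag that the real work is the uniqueness step across the Weyl chamber walls. One small caution: the indicial exponents you write as $\{-\omega\lambda-\rho\}_{\omega\in W}$ should, in the paper's conventions (compare $\varphi_\lambda(a_tK)=\sum_\omega h_\omega(\lambda,t)\,a_t^{\omega\lambda-\rho}$ and the characteristic exponents $s_{\omega,j}=(\rho-\omega\lambda)(H_j)$ in Section \ref{section-intertwining}), be $\{\omega\lambda-\rho\}_{\omega\in W}$; since $-1\notin W$ for general root systems this sign is not vacuous, though it does not affect the structure of the argument.
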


By $e^{\lambda}$ for $\lambda\in\mathfrak a^*$ we mean $e^{\lambda(a)}=e^{\lambda(\log a)}$. It is customary to geometrically rewrite the spherical function \eqref{spherical-function} as follows:
\begin{equation*}
\varphi_\lambda(z)=\int_{\mathcal B} e^{(\lambda+\rho)A(z,b)}\,db,\quad z\in\mathcal X,
\end{equation*}
where $A(z,b)$ is  defined in \eqref{composite distance} and $db$ is a suitable $K-$invariant measure on $\mathcal B$ of total measure 1.

\begin{remark} Additionally, note that $\varphi_\lambda$ takes the value one at the identity and that $\varphi_\lambda$ is an eigenfunction for any differential operator $D$ in $\mathbf D(\mathcal X)$ (these will be defined in the coming subsection, see formula \eqref{joint eigenspace}).
\end{remark}

\subsection{The Poisson transform}

Let $\lambda\in\mathfrak a^*_{\mathbb C}$.  For a smooth function defined on  $\mathcal B$, its Poisson transform $\mathcal P_\lambda(f)$ on $\mathcal X$ is defined by
\begin{equation}\label{definition-Poisson}(\mathcal P_\lambda f)(z)=\int_{\mathcal B} P_\lambda(z,b) f(b)\,db,\end{equation}
where the Poisson kernel is given by
\begin{equation}\label{Poisson-kernel}
P_\lambda(z,b)=e^{(-\lambda-\rho)H(z^{-1}b)},\quad z\in\mathcal X, b\in \mathcal B.
\end{equation}
We know $\mathcal P_\lambda(f)$ is an analytic function on $\mathcal X$.

Note that, according to \eqref{spherical-function}, the symmetric functions are given in terms of the Poisson transform of the constant function one.

\subsection{Differential operators and joint eigenvalues}\label{Section:joint-eigenvalues}

This section will follow closely the exposition in \cite{Helgason:libro3}, Chapter II.

 Let $\mathbf D(\mathcal X)$ denote the algebra of differential operators on $\mathcal X=\mathcal G/K$ which are invariant under the action of $\mathcal G$.  We also consider the algebra $\mathbf D(A)$ of differential operators on $A$ which are invariant under all translations ($A$ is abelian, thus $\mathbf D(A)$ contains the differential operators on $A$ with constant coefficients), as well as the subalgebra $\mathbf D_W(A)\subset \mathbf D(A)$ of invariant operators under the action of $W$ on $A$.

The first example of an invariant operator is the Laplace-Beltrami operator $\Delta_\mathcal X$. We note here that it coincides with the Casimir operator in this setting.

Let  $V$ be a submanifold in $\mathcal X$ and let $H$ be a Lie transformation group on $V$  with a suitable transversality condition, i.e., for each $v\in V$ the tangent space at $v$ can be written as direct sum
 $$\mathcal X_v=(H\cdot v)_v\oplus V_v.$$
 Let us denote by $\bar{f}$ the restriction of a function $f$ to $V$. Then,
   for each $D\in \mathbf D(\mathcal X)$, there exists a unique differential operator $R_H(D)$ on $V$, called the \emph{radial part} of $D$ with respect to $H$, such that
 $$\overline{(Du)}=R_H(D) \bar u,$$
for each locally invariant function $u$ on an open subset of $V$. %Here the bar denotes the restriction $V$.
This radial part can be explicitly computed as
$$R_H(D)=\delta^{-1/2} D_V\circ \delta^{1/2}-\delta^{-1/2} L_V(\delta^{1/2})=D_V+\grad_V(\log\delta),$$
where $\circ$ denotes composition of differential operators and $\delta$ is the density function for $V$, which is defined as follows: for each $v\in V$, the orbit $H\cdot v$ inherits a Riemannian structure from that of $V$. The corresponding Riemannian measure must be of the form
$$d\sigma_v=\delta(v) dh, $$
for $dh$ a left-invariant Haar measure on $H$.\\

Consider the Iwasawa decomposition of $\mathcal G=KAN$. We compute the radial parts of the Laplace-Beltrami in $\mathcal X$ for the actions of both $N$ and $K$.

First, the orbits $N\cdot o$ and $A\cdot o$ are orthogonal under the product defined by the Killing form, which guarantees the transversality condition. From the previous formula we have
$$R_N(\Delta_{\mathcal X})=e^{\rho} \Delta_A \circ e^{-\rho} - \langle \rho,\rho\rangle,$$
where $\Delta_A$ denotes the Laplacian on $A\cdot o$.
In general, for any $D\in\mathbf D(\mathcal X)$ we have the classical theorem:
\begin{thm}
The mapping defined by
\begin{equation}\label{Gamma}\mathbf\Gamma:D\mapsto e^{-\rho}R_N (D)\circ e^{\rho}\end{equation}
is an isomorphism of $\mathbf D(\mathcal X)$ onto $\mathbf D_W(A)$.
\end{thm}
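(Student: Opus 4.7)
The plan is to establish that $\mathbf{\Gamma}$ is (i) well-defined with image in $\mathbf{D}_W(A)$, (ii) an algebra homomorphism, (iii) injective, and (iv) surjective. The key technical device throughout will be to test operators against the spherical functions $\varphi_\lambda$ furnished by Theorem \ref{thm:spherical}, exploiting the remark that each $D \in \mathbf{D}(\mathcal X)$ acts on $\varphi_\lambda$ by a scalar $p_D(\lambda)$. A standard argument (using that $\varphi_\lambda$ is real-analytic in $\lambda$ and $D$ is $\mathcal G$-invariant) shows $\lambda\mapsto p_D(\lambda)$ is polynomial, and that $D\mapsto p_D$ is an algebra homomorphism from $\mathbf D(\mathcal X)$ into polynomial functions on $\mathfrak a^*_{\mathbb C}$.

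For step (i), since $\varphi_\lambda$ is $K$-bi-invariant, its restriction to $A$ satisfies $R_N(D)(\varphi_\lambda|_A) = p_D(\lambda)\,\varphi_\lambda|_A$. Examining the Harish-Chandra asymptotic series for $\varphi_\lambda$ on $A^+$, whose leading exponentials are of the form $e^{(\omega\lambda-\rho)(\log a)}$ with $\omega \in W$, one reads off that $\mathbf{\Gamma}(D) = e^{-\rho} R_N(D) \circ e^{\rho}$ acts on each character $a\mapsto e^{\lambda(\log a)}$ by multiplication by $p_D(\lambda)$. Since $\mathbf{\Gamma}(D)$ multiplies every exponential by the value of a fixed polynomial, it is necessarily the constant-coefficient differential operator $p_D(\partial)$ on $A$, so $\mathbf{\Gamma}(D) \in \mathbf{D}(A)$. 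The Weyl-invariance $p_D(\omega\lambda) = p_D(\lambda)$ follows immediately from the equality $\varphi_{\omega\lambda} = \varphi_\lambda$ of Theorem \ref{thm:spherical}, placing $\mathbf{\Gamma}(D)$ in $\mathbf{D}_W(A)$. Step (ii) is then routine: $R_N$ is multiplicative on functions invariant along $N$-orbits (a general feature of radial parts under transverse actions), and conjugation by $e^{\rho}$ preserves composition.

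For step (iii), if $\mathbf{\Gamma}(D)=0$ then $p_D\equiv 0$, so $D$ annihilates every spherical function $\varphi_\lambda$. By the Harish-Chandra spherical Plancherel theory, $\{\varphi_\lambda\}$ is dense in the space of $K$-bi-invariant functions on $\mathcal G$; combined with $\mathcal G$-invariance of $D$ this forces $D=0$. For step (iv), I would compare algebra structures. By Harish-Chandra's symmetrization isomorphism one has $\mathbf D(\mathcal X)\cong S(\mathfrak p)^K$, and by Chevalley's restriction theorem the map $S(\mathfrak p)^K\to S(\mathfrak a)^W \cong \mathbf D_W(A)$ induced by restriction is an isomorphism of polynomial algebras. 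A direct check shows the polynomials $p_D$ obtained from spherical-function eigenvalues match (up to the Weyl shift by $\rho$) the symbols produced by this Chevalley map on a set of algebra generators, so $\mathbf{\Gamma}$ surjects onto $\mathbf D_W(A)$.

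The principal obstacle is step (iv): beyond the spherical function machinery, one must import the two nontrivial structural facts $\mathbf D(\mathcal X)\cong S(\mathfrak p)^K$ and the Chevalley restriction isomorphism $S(\mathfrak p)^K\cong S(\mathfrak a)^W$, and then carefully verify that the identification these provide coincides algebraically with $\mathbf{\Gamma}$ rather than merely matching dimensions degree-by-degree. The subtle point is bookkeeping for the $e^{\rho}$-conjugation, which shifts the polynomial $p_D$ by $\rho$ in its argument but does not affect either Weyl-invariance or the rank of the resulting algebra.
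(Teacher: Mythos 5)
The paper does not prove this statement; it is cited as the classical Harish-Chandra isomorphism (Helgason, \emph{Groups and Geometric Analysis}, Ch.~II.5), and the paper moves straight on to consequences. Your outline reconstructs the standard proof of that classical result, and steps (i) and (ii) can be carried through essentially as you say. (One stylistic shortcut: that $\mathbf\Gamma(D)\in\mathbf D(A)$, i.e.\ has constant coefficients, follows directly from $A$-equivariance of $R_N(D)$ --- $A$ normalizes $N$ and $D$ is $\mathcal G$-invariant, so $R_N(D)$ commutes with $A$-translations on the abelian group $A$ --- and you do not actually need the Harish-Chandra asymptotic series for this part. The spherical functions are genuinely needed only for the $W$-invariance, which you handle correctly via $\varphi_{\omega\lambda}=\varphi_\lambda$.)

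The genuine gaps are in (iii) and (iv). For injectivity you invoke the spherical Plancherel theory, claiming $\{\varphi_\lambda\}$ is ``dense in the space of $K$-bi-invariant functions.'' This is both imprecise (the $\varphi_\lambda$ for $\lambda\in\mathfrak a^*$ are bounded but not in $L^2$, so ``dense'' must be replaced by the inversion formula of the spherical transform, a different and stronger statement) and, more seriously, circular: in the standard development the spherical Plancherel theorem is proved \emph{after} the Harish-Chandra isomorphism and depends on it. In (iv), the phrase ``Harish-Chandra's symmetrization isomorphism $\mathbf D(\mathcal X)\cong S(\mathfrak p)^K$'' should be read only as a linear (equivalently, graded) isomorphism --- symmetrization does not respect products, so you cannot compare algebra structures directly via it. The way to repair both steps at once, and the way the proof is actually done, is to filter $\mathbf D(\mathcal X)$ and $\mathbf D_W(A)$ by operator order and pass to associated graded algebras; the induced map on graded pieces is identified, with no Plancherel input, with the Chevalley restriction $S(\mathfrak p)^K\to S(\mathfrak a)^W$, which is an algebra isomorphism. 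A filtered map whose associated graded map is bijective is itself bijective, giving injectivity and surjectivity simultaneously. You gesture at this at the very end of (iv), but as written the injectivity step rests on a tool you should not be assuming at this stage of the theory.
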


Note that for the Laplace-Beltrami operator, we immediately have the explicit formula $\mathbf\Gamma(\Delta_{\mathcal X})=\Delta_A-\langle\rho,\rho\rangle$.

As a consequence of the previous Theorem, the algebra $\mathbf D(\mathcal X)$ is a (commutative) polynomial ring in $r=\textrm{rank }\mathcal{X}$ algebraically independent generators $D_1,\ldots,D_r$, whose degrees $d_1,\ldots,d_r$ are canonically determined by $\mathcal G$.\\

Before we continue with the exposition, let us give some examples. In the rank one case, a generator for $\mathbf D(\mathcal X)$ is precisely the Laplace-Beltrami operator $\Delta_{\mathcal X}$. If $\mathcal X$ is the product of two rank one symmetric spaces $\mathcal X=\mathcal X_1\times\mathcal X_2$, then $\mathbf D (\mathcal X)$ is generated by $\{\Delta_{\mathcal X_1},\Delta_{\mathcal X_2}\}$.

The next nontrivial example is $\mathcal X=SL_3(\mathbb R)/SO(3)$, a symmetric space of rank two. In this case there are two generators for $\mathbf D(\mathcal X)$ (see \cite{Oshima:commuting}):
\begin{equation*}\begin{split}
D_1:&=-(\theta_1 -1)(\theta_1-\theta_2)(\theta_2-\theta_1)\\
&+2t_1^2 t_2^2(\partial_x+y\partial_z)\partial_y \partial_z+(\theta_1-1)t_2^2\partial_{yy}-(\theta_1-\theta_2-1)t_1^2t_2^2\partial_{zz}-(\theta_2-1)t_1^2(\partial_x+y\partial_z)^2,\\
D_2:&=(\theta_1 -1)^2+(\theta_1 -1)(\theta_2-1)-(\theta_2 -1)^2\\
&-t_2^2 \partial_{yy} -t_1^2 t_2^2\partial_{zz}-t_1^2(\partial_x+y\partial_z)^2.
\end{split}\end{equation*}
Here we have defined $t_1,t_2$ as the two radial variables (in $A$), while $x,y,z\in\mathcal B$. Moreover, we have denoted
$$\theta_i:=t_i\partial_{t_i},\quad i=1,2.$$
It is interesting to observe that ellipticity is lost, since $D_1$ is of third order.\\

Now we go back to the calculation of radial parts. The action of the group $K$ on $\mathcal X$ yields a radial decomposition for the regular part of $\mathcal X$.
   %We may now characterize the radial part of any differential operator, for which we decompose
$$\mathcal X'=K\cdot(A^+\cdot o).$$
 Recall that here $A^+$ corresponds to the exponential of a fixed Weyl chamber.

In particular, we have have the following simple expression for the radial Laplace-Beltrami operator $\Delta$ on $\mathcal X$,
\begin{equation}\label{RK}R_K(\Delta_{\mathcal X})=\Delta_A+\sum_{\alpha\in \Lambda^+(\mathfrak g,\mathfrak a)}m_\alpha (\coth \alpha)H_\alpha,\end{equation}
where we have defined the vector $H_\alpha\in\mathfrak a$ by \eqref{H-alpha}. Note that this operator is only defined in $\mathfrak a'$ (the regular part of $\mathfrak a$) since the coefficients of the first order terms present singularities along the walls of the Weyl chambers $\{\alpha_i=0\}$, $i=1,\ldots,r$. But, on the other hand, when $\alpha\to\infty$, it  converges to a constant coefficient operator with no singularity. For instance, to $\partial_{tt}+n\partial_t$ in the case of the hyperbolic Laplacian  \eqref{hyperbolic-Laplacian}.

There is a similar behavior for a general differential operator:

\begin{prop}[\cite{Helgason:libro}, Chapter II]Given any $D\in\mathbf D(\mathcal X)$, is known that
$$R_K(D)=e^{-\rho}\mathbf\Gamma(D)\circ e^\rho+D'.$$
Moreover, $e^{-\rho}\mathbf\Gamma(D)\circ e{\rho}$ is a constant coefficient operator, and $D'$ has smaller degree than  $D$. In particular,  if we denote
$D'_{tH}$  the operator $D'$ with its coefficients evaluated at $tH$ we have that
$\displaystyle{\lim_{t\to +\infty}} D'_{tH}=0$ for any $H\in\ma^+$.
\end{prop}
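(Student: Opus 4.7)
The plan is to bootstrap from the preceding theorem: since $\mathbf\Gamma(D)=e^{-\rho}R_N(D)\circ e^\rho$ is constant-coefficient and $W$-invariant, one has $R_N(D)=e^\rho\circ\mathbf\Gamma(D)\circ e^{-\rho}$, so $e^{-\rho}\mathbf\Gamma(D)\circ e^{\rho}$ is automatically a well-defined constant-coefficient differential operator on $A$. The $K$-radial part $R_K(D)$ lives on the same abelian subgroup, and the idea is to show that it differs from a similar conjugation only by exponentially decaying lower-order corrections, coming from the difference between the $K$- and $N$-orbit densities on $A^+$.

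The key computation is the factorization of the $K$-orbit density on $A^+$,
$$\delta_K(\exp H)=c\prod_{\alpha\in\Lambda^+(\g,\ma)}\big(\sinh\alpha(H)\big)^{m_\alpha}=c'\,e^{2\rho(H)}\prod_{\alpha\in\Lambda^+(\g,\ma)}\big(1-e^{-2\alpha(H)}\big)^{m_\alpha},$$
so that $\delta_K^{1/2}=c''\,e^{\rho}\,\Phi^{1/2}$ with $\Phi^{\pm 1/2}=1+\sum_{\nu}c^{\pm}_{\nu}\,e^{-\nu(H)}$, the sum running over positive integer combinations of simple roots. Substituting this factorization into the general radial-part formula and expanding yields a convergent expression
$$R_K(D)=\widetilde D+\sum_{\nu}e^{-\nu(H)}E_{\nu},$$
where $\widetilde D$ is constant-coefficient and each $E_{\nu}$ is a differential operator of order at most $\deg D-1$. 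To identify $\widetilde D$ with $e^{-\rho}\mathbf\Gamma(D)\circ e^{\rho}$, I would test against the spherical functions: $D\varphi_\lambda=\gamma_D(\lambda)\varphi_\lambda$ for the $W$-invariant polynomial $\gamma_D$ (the Harish-Chandra image of $D$), and the Harish-Chandra expansion $\varphi_\lambda(\exp H)=\sum_{\omega\in W}c(\omega\lambda)\,e^{(\omega\lambda-\rho)(H)}(1+o(1))$ forces $\widetilde D(e^{(\omega\lambda-\rho)(H)})=\gamma_D(\lambda)e^{(\omega\lambda-\rho)(H)}$; $W$-invariance of $\gamma_D$ then matches this with $e^{-\rho}\mathbf\Gamma(D)\circ e^{\rho}$. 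The remainder $D':=\sum_\nu e^{-\nu(H)}E_\nu$ has degree strictly less than $D$ by construction, and $D'_{tH}\to 0$ as $t\to+\infty$ because each term carries the exponentially decaying coefficient $e^{-\nu(tH)}$ with $\nu>0$ and $H\in\ma^+$.

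The main obstacle is the order-reduction argument for generators of degree greater than two, since the explicit radial-part formula from the excerpt is only transparent for second-order operators. For the template case $D=\Delta_{\mathcal X}$ the verification is direct: $e^{-\rho}(\Delta_A-\langle\rho,\rho\rangle)e^{\rho}=\Delta_A+2H_\rho=\Delta_A+\sum_\alpha m_\alpha H_\alpha$, and combined with \eqref{RK} this gives $D'=\sum_\alpha m_\alpha(\coth\alpha-1)H_\alpha$, which is first-order with $\coth\alpha-1=\frac{2e^{-2\alpha}}{1-e^{-2\alpha}}\to 0$ in $\ma^+$. For higher-degree $D$ I would proceed by induction on degree along the algebraically independent generators $D_1,\ldots,D_r$ supplied by the preceding theorem, carefully tracking commutators of the differential part with $\Phi^{\pm 1/2}$; the spherical-function computation bypasses most of this bookkeeping for the identification of the leading term, leaving only the termwise decay estimates to control $D'$.
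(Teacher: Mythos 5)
The paper does not prove this proposition; it cites Helgason's book \cite{Helgason:libro}, Chapter~II, without argument. So the relevant question is whether your proposed proof is sound on its own terms, and here there is a genuine gap that you partly diagnose yourself but do not close.

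Your density factorization $\delta_K(\exp H)=c\,e^{2\rho(H)}\prod_\alpha(1-e^{-2\alpha(H)})^{m_\alpha}$ and the consequent expansion $\Phi^{\pm1/2}=1+\sum_\nu c^\pm_\nu e^{-\nu(H)}$ are correct, and your explicit verification for $\Delta_{\mathcal X}$ is also correct. The trouble is the phrase ``substituting this factorization into the general radial-part formula and expanding.'' The radial-part identity $R_H(D)=\delta^{-1/2}D_V\circ\delta^{1/2}-\delta^{-1/2}L_V(\delta^{1/2})$ reproduced in the paper is really only the Laplacian formula (the second form $D_V+\grad_V\log\delta$ only makes sense for $D$ the Laplacian), and for a general invariant operator $R_K(D)$ is \emph{not} a conjugation of a constant-coefficient operator by $\delta_K^{1/2}$ up to a multiplication term. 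Without that structure, the Leibniz expansion in $\Phi^{\pm1/2}$ has nothing to act on, and step~2 of your argument is not established.

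Your proposed repair, inducting on degree along the generators $D_1,\dots,D_r$, fails for the same reason: the induction step (multiplying operators that satisfy the conclusion) works out, but the base cases are the $r$ algebraically independent generators themselves, and in rank $\geq 2$ these can have degree strictly greater than two. The paper's $SL_3(\mathbb{R})/SO(3)$ example exhibits a cubic generator $D_1$; for it there is no explicit conjugation formula to expand, so the base case is exactly the statement you are trying to prove. The spherical-function test does correctly pin down the constant-coefficient part $\widetilde D$ \emph{assuming} $R_K(D)=\widetilde D+D'$ holds with $D'$ lower order and decaying; it cannot be used to establish that decomposition, so invoking it to ``bypass most of this bookkeeping'' does not fill the hole. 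Helgason's actual argument is algebraic: it works at the level of $U(\mathfrak g)$, decomposing $U(\mathfrak g)=U(\mathfrak a)\oplus(\bar{\mathfrak n}\,U(\mathfrak g)+U(\mathfrak g)\mathfrak k)$ and tracking how $\mathfrak k$-vectors, re-expressed in $\mathfrak a\oplus\mathfrak n$ along $A^+$, contribute coefficients that are rational in the $e^{-\alpha(H)}$ and hence decay; this is what makes the structural claim available for operators of any degree, and it is the piece your analytic approach does not reach.
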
Thus $e^{-\rho}\mathbf\Gamma(D)\circ e^{\rho}$ can be understood as the model operator for $R_K(D)$ when we are away from the walls of the Weyl chamber. \\

%Note that $K$-invariant functions on $\mathcal X$ can be regarded as the $W$-invariant functions on $A$. When an operator $D\in\mathbf D(\mathcal X)$ is applied to such functions it becomes a singular differential operator on $A$. The principal part of this operator has constant coefficients and this coincides with the highest order term in $\mathbf\Gamma(D)$.\\

Let $\chi:\mathbf D(\mathcal X)\to \mathbb C$ be an algebra-homomorphism and denote by $E_\chi$ the \emph{joint eigenspaces}
$$ E_\chi=\left\{u\in \mathcal C^\infty(\mathcal X)\,: \, D u=\chi(D)u\mbox{ for every }D\in \mathbf D(\mathcal X)\right\}.$$
Using the isomorphism \eqref{Gamma}, one can parameterize the set of these joint eigenspaces as follows: recall that the spherical function $\varphi_\lambda$
is an eigenfunction for any differential operator in $\mathbf D(\mathcal X)$. More precisely, it
satisfies the differential equation
\begin{equation}\label{equation20}R_K(D)\varphi_\lambda=\Gamma(D)(\lambda)\varphi_\lambda\end{equation}
in $A^+$. For each $\lambda\in\mathfrak a_{\mathbb C}^*$, define the character $\chi_\lambda(D)=\Gamma(D)(\lambda)$, and
\begin{equation}\mathcal E_\lambda(\mathcal X)=\{u\in\mathcal C^\infty(\mathcal X)\,:\, Du=\chi_\lambda(D)u\text{ for }D\in\mathbf D(\mathcal X)\}.\label{joint eigenspace}\end{equation}
Thanks to the isomorphism \eqref{Gamma}, these constitute all the joint eigenspaces $E_{\chi_\lambda}$ of the algebra $\mathbf D(\mathcal X)$.
%Since $\mathbf D(\mathcal X)$ contains elliptic operators, in particular, the Laplace-Beltrami operator, a joint eigendistribution would by classical regularity theorems be a $\mathcal C^\infty$ function (even analytic function).

Moreover, functions $u$ in  $\mathcal E_\lambda(\mathcal X)$ are characterized by the functional equation
$$\int_K u(gk\cdot z)\,dk=\varphi_\lambda(z)u(g\cdot o),\quad z\in \mathcal X,g\in \mathcal G.$$
These eigenspaces constitute canonical representations for the group $\mathcal G$. The natural representation  $\pi_\lambda$ of $\mathcal G$ on $\mathcal E_\lambda(\mathcal X)$ is
\begin{equation}\label{representation}(\pi_\lambda(g) u)(z)=u(g^{-1}\cdot z).\end{equation}

Finally, we say  that $u$ is \emph{strongly $\lambda$-harmonic} if
it belongs to the joint eigenspace $\mathcal E_\lambda(\mathcal X)$, $\lambda\in\mathfrak a_{\mathbb C}^*$. Note that, in the rank 1 case, since the Laplace Beltrami operator is the generator of all invariant operators, identifying $\lambda\in\mathfrak a_{\mathbb C}^*\sim\mathbb C^*$ with $\mu\in\mathbb C$, this notion agrees with the classical definition of a $\mu$-harmonic function, i.e., a solution to
$$-\Delta_{\mathcal X}u-\mu u=0.$$
However,
 in the higher rank case, $u$ is strongly $\lambda$-harmonic if and only if it is the solution of a system of $r$ equations (not necessarily elliptic). This type of systems will be considered in Section \ref{section-intertwining}.

\subsection{The $c$-function}

Harish-Chandra's idea to construct spherical functions was to obtain one particular solution $\Phi_\lambda$ of equation \eqref{equation20} in terms of an asymptotic expansion in which the coefficients are given by a recursion relation.  Note that in the case when $D=\Delta_{\mathcal X}$, the Laplace operator, this equation reduces to
\begin{equation}\label{equation40}
R_K(\Delta_{\mathcal X}) \varphi_\lambda+(-\langle \lambda,\lambda\rangle+\langle \rho,\rho\rangle)\varphi_\lambda=0.\end{equation}
Then he showed that the functions $\{\Phi_{\omega\lambda}\,:\, \omega\in W\}$ are linearly independent, so any spherical function must be of the form
\begin{equation}\label{spherical-formula}\varphi_\lambda=\sum_{\omega\in W} \mathbf c(\omega\lambda)\Phi_{\omega\lambda}\quad \text{on } A^+\cdot o,\end{equation}
for $\lambda\in\ma_{\mathbb C}^*$ except for a set of exceptional values.
%such that $i(s\lambda-\sigma\lambda)\not\in\tilde\Lambda$ for $s\neq\sigma$ in $W$ and that $\langle \tilde{\lambda},\timarlde{\lambda}\rangle\neq 2i\langle \tilde{\lambda},s\lambda\rangle$ for all $\tilde{\lambda}\in\Lambda\backslash \{0\}$, $s\in W$.
The function $\mathbf c$ is known as Harish-Chandra's $c$-function.

A product formula was obtained in \cite{Gindikin-Karpelevi}:
$$\mathbf c(\lambda)=c_0 \prod_{\alpha\in\Lambda_o^+(\mathfrak g,\mathfrak a)} \frac{2^{-\langle \lambda,\alpha_o\rangle}\Gamma(\langle \lambda,\alpha_o\rangle)}{\Gamma\lp \tfrac{1}{2}\lp \tfrac{1}{2}m_\alpha+1+\langle \lambda,\alpha_o\rangle\rp\rp \Gamma\lp \tfrac{1}{2}\lp \tfrac{1}{2}m_\alpha+m_{2\alpha}+\langle \lambda,\alpha_o\rangle\rp\rp}.$$
It is a meromorphic function on $\ma_{\mathbb C}^*$. Here $\alpha_o$ denotes the normalized root $\alpha/\langle \alpha,\alpha\rangle$, the constant $c_0$ is given by the condition $\mathbf c(-\rho)=1$ and we denote by $\Lambda_o^+(\mathfrak g,\mathfrak a)$ the set of positive indivisible roots.

The importance of the $c$-function arises when considering the Helgason-Fourier transform of symmetric spaces, since it is the measure that appears in the Plancherel theorem for this Fourier transform. We will not consider this subject here, but refer instead to Helgason's books \cite{Helgason:libro,Helgason:libro2,Helgason:libro3}.

The asymptotic behavior of the spherical functions is well known and indeed, from the work of Harish-Chandra,
for $\lambda$ satisfying $\re \lambda\in\mathfrak a^*_+$,  and any $H\in\ma^+$,
$$\mathbf c(\lambda)=\lim_{t\to+\infty} e^{(-\lambda+\rho)(tH)} \varphi_\lambda(\exp(tH)).$$
This limit exists and is independent of $H$.

\subsection{A special case: symmetric spaces of rank one}

Joint eigenspaces for symmetric spaces of rank one have been completely understood (\cite{Helgason:libro}, for instance) since the equation  in \eqref{joint eigenspace} can be reduced to an ODE, which is of Sturm-Liouville type and can be solved in terms of special functions. The original calculation can be found in Section 13 of \cite{Harish-Chandra:sphericalI}.

The symmetric spaces one needs to consider are:
\begin{itemize}
\item For $\mathbb K=\mathbb R$, $\mathcal G = SO_e(n+1, 1)$, $K = SO(n+1)$,
\item For $\mathbb K = \mathbb C $, $\mathcal G = SU(n+1, 1)$, $K = S(U(n+1) × U(1))$,
\item For  $\mathbb K = \mathbb H$, $\mathcal G = Sp(n+1, 1)$, $K = Sp(n+1) × Sp(1)$,
\item For  $\mathbb K = \mathbb O$, $\mathcal G = F4(-20)$, $K = Spin(9)$.
\end{itemize}
Here we make the obvious identifications $\mathfrak p\simeq\mathbb R^{d(n+1)}$, where $d=\dim_{\mathbb R}\mathbb K$, $\ma\simeq\mathbb R$, $\mathfrak a^+\simeq (0,\infty)$, and the roots are
\begin{equation*}\begin{split}
&\Lambda(\mathfrak g,\mathfrak a)=\{\pm\alpha\}\quad\text{if } \mathbb K=\mathbb K=\mathbb R,\\ &\Lambda(\mathfrak g,\mathfrak a)=\{\pm \alpha,\pm 2\alpha\}\quad \text{if } \mathbb K=\mathbb C,\mathbb H,\mathbb O,
\end{split}\end{equation*} with multiplicities $$m_\alpha=dn,\quad m_{2\alpha}=d-1.$$
Then
$$\rho=\tfrac{1}{2}(m_\alpha \alpha+m_{2\alpha}2\alpha)=\tfrac{dn}{2}+d-1.$$
%It is well known that
%$$\rho=\frac{1}{2}\lim_{R\to \infty}\frac{\log vol(B_x(R))}{R}.$$

For $p\in\mathcal X$, consider geodesic polar coordinates around $p$. Let $S_r(p)$ denote the sphere in $\mathcal X$ with center $p$ and radius $r$. Let $A(r)$ be the area of $S_r(p)$, which, by the homogeneity of $\mathcal X$, is independent of $p$. Then (\cite{Helgason:libro3}, chapter II)
$$\Delta_{\mathcal X}=\partial_{rr}+\frac{\partial_r A(r)}{A(r)}\partial_r+\Delta_S,$$
where $\Delta_S$ is the Laplace-Beltrami operator on $S_r(p)$. An explicit expression for $A(r)$ is given in the same reference.
%$$A(r)=C\sinh^{m_\alpha}(cr)\sinh^{m_{2\alpha}}(2cr)$$
%for some positive constant $C$ that depends on each model, $c=(2m_\alpha+8m_{2\alpha})^{-1/2}$, where $\pm \alpha$ and possibly $\pm 2\alpha$
%denote the restricted roots with multiplicities $m_\alpha$ and $m_{2\alpha}$.\\

Consider the radial part of the Laplacian from \eqref{RK}. The scattering equation in radial coordinates \eqref{equation20}, written as \eqref{equation40} has a very simple expression in the rank one case. Indeed, let us write the ODE as
$$-R_K(\Delta_{\mathcal X}) \varphi(t)-\mu \varphi(t)=0,$$
and use the notation  \eqref{gamma-nu}.
 After the change of variable $\sigma=-\sinh^2 t$, it reduces to
$$\sigma(\sigma-1)\varphi''(\sigma)+[(a+b-1)z-c]\varphi'(\sigma)+ab\,\varphi(\sigma)=0$$
for
\begin{equation*}\begin{split}
&a=\tfrac{1}{2} \big( \tfrac{1}{2}m_\alpha+m_{2\alpha}+\gamma\big),\\
&b=\tfrac{1}{2} \big( \tfrac{1}{2}m_\alpha+m_{2\alpha}-\gamma\big),\\
&c=\tfrac{1}{2}(m_\alpha+m_{2\alpha}+1).
\end{split}\end{equation*}
Thus an elementary eigenfunction can be given an explicit formula
\begin{equation}\label{ODE-sol}\varphi(t)=\,_2F_1(a,b,c,-\sinh^2 t),\end{equation}
that has a very precise asymptotic behavior near $\partial \mathcal X$.
%Given $\lambda\in\mathbb C$, the equation satisfied by $\varphi_\lambda$ is \eqref{equation40}, so once we substitute the radial part of the Laplacian, we obtain the following equation for $\varphi:=\varphi_\lambda(t)$
%\begin{equation}\label{ODE}\varphi''+[m_\alpha\coth t+2m_{2\alpha}\coth(2t)] \varphi'+
%[d_\alpha\sinh^{-2}t+d_{2\alpha}\sinh^{-2}(2t)-\ell(\ell+m_\alpha+2m_{2\alpha})]\varphi=0,\end{equation}
%for some suitable constants $d_\alpha$, $d_{2\alpha}$, and where we have set $\ell=\lambda-\rho$. This ODE has an explicit solution given by
%$$\varphi=C(\lambda) \tanh^s t \,\cosh^\ell t \cdot F\big(\tfrac{1}{2}(s+r-\ell),\tfrac{1}{2}(s-r-\ell+1-m_{2\alpha}); s+\tfrac{1}{2}(m_\alpha+m_{2\alpha}+1);\tanh^2 t\big),$$
%where $F(a,b;c;d)$ is the standard hypergeometric function, and $s$ a suitable integer. For the precise formulas, see Theorem 11.2, Chapter III in \cite{Helgason:libro}.\\

%Note here that if we go back to the half-space model for the real hyperbolic space $\mathbb H^{n+1}$ from Section \ref{subsection:hyperbolic-space}, the ODE is \eqref{??}
%which yields \eqref{spherical-hyperbolic}.

Note that the complex and quaternionic cases have been studied in \cite{Frank-Gonzalez-Monticelli-Tan}, from a variational interpretation together in terms of energy. For differential forms, see \cite{Carron-Pedon}.

The spherical functions are joint eigenfunctions  \eqref{joint eigenspace} that are $\mathcal G$-invariant. Since $\Delta_{\mathcal X}$ commutes with $\mathcal G$, any left translate of $\varphi$ will be also an eigenfunction for the same eigenvalue $\mu$. In particular, averaging over $K$, the function
 $$\varphi_\gamma^{(n)}(\cdot)=\int_K \varphi( k \,\cdot)\,dk$$
is a $K$ invariant function of $\Delta_{\mathcal X}$ in $\mathcal G/K$. We have seen in Theorem \ref{thm:spherical} that these constitute all the spherical functions.

\section{Strong solutions and the intertwining operators}\label{section-intertwining}

References for the background material in this section are the book \cite{Schlichtkrull}, chapter 5, and \cite{KKMOOT}.

Let $u$ be a strongly $\lambda$-harmonic function on $\mathcal X$. Then $u$ satisfies the
following system of differential equations
\begin{equation}(\mathcal M_\lambda): \, Du=\chi_\lambda(D)u, \mbox{ for all }D\in\mathbf D(\mathcal X).\label{strong system}\end{equation}
Note that this is a system of $r$ equations since $\mathbf D(\mathcal X)$ is generated by $r$ operators $D_1,\ldots,D_r$.

It is well known that, given a Dirichlet data $f$ on $\mathcal B$, its Poisson transform $\mathcal P_\lambda f$, defined in \eqref{definition-Poisson}, is a strong solution for \eqref{strong system}. Here we would like to consider the reverse question.

In order to write boundary values, one needs to introduce the space of \emph{hyperfunctions} on $\mathcal B$, denoted by $B(\mathcal B)$. A detailed description lies outside the scope of this paper, so we just say that the theory of hyperfunctions  generalizes the theory of distributions and, on a compact real analytic manifold,  hyperfunctions
 agree with  analytic functionals, that is,  continuous linear functionals on the space on analytic functions on the manifold. On the other hand, they can be understood as boundary values of analytic functions and they are natural boundary conditions for solutions to \eqref{strong system}. For further details we refer the reader to the books \cite{Schlichtkrull} and \cite{Sato-Kawai-Kashiwara}.

%{\color{red} agregar aca japoneses de relacion de hyperfunctions con funciones}

Next, we also  introduce density bundles, which is a standard notation in the bibliography to state the intertwining property.
For $\lambda\in\mathfrak a_{\mathbb C}^*$ we denote by $B(\mathcal G/P,L_\lambda)\simeq B(\mathcal B,L_\lambda)$, under a canonical identification, the space of hyperfunctions $f$ on $\mathcal G$ that satisfy
$$f(gman)=a^{\lambda-\rho}f(g)$$
for all $g\in \mathcal G$, $m\in M$, $a\in A$, $n\in N$. Also, according to the notation in \eqref{representation}, for $g\in \mathcal G$ and $f\in B(\mathcal B,L_\lambda)$,
$$(\pi_\lambda(g)f)(x)=f(g^{-1}x)$$
defines a representation of $\mathcal G$ on $B(\mathcal B,L_\lambda)$. In the conformal case, this is just a sophisticated notation to introduce the conformal property \eqref{intertwining-relation}.

Let $An(\mathcal X,\mathcal M_\lambda)$ denote the space of real analytic solutions of $(\mathcal M_\lambda)$ on $\mathcal X$ for $\lambda\in\mathfrak a^*_{\mathbb C}$. The crucial theorem is, without being extremely rigorous in the statement (we will elaborate on it in the coming subsections though),

\begin{thm}
For generic values of $\lambda$, the Poisson transform is a bijection of $B(\mathcal B,L_\lambda)$ onto $An(\mathcal X,\mathcal M_\lambda)$, i.e.,
 any  $u\in An(\mathcal X,\mathcal M_\lambda)$  may be represented as a Poisson integral of a hyperfunction $f$ on the boundary $\mathcal B$ and, for generic eigenvalues, $f$ is the boundary value of $u$ in a suitable sense.
\end{thm}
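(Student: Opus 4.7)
The plan is to establish the two directions of the asserted bijection separately. That $\mathcal P_\lambda$ lands in $An(\mathcal X,\mathcal M_\lambda)$ is the routine direction: for fixed $z$ the kernel $P_\lambda(z,\cdot)$ is real-analytic on $\mathcal B$, so pairing against a hyperfunction $f \in B(\mathcal B,L_\lambda)$ produces a well-defined value; applying any $D \in \mathbf D(\mathcal X)$ under the pairing and using that $D\,P_\lambda(\cdot,b) = \chi_\lambda(D)\,P_\lambda(\cdot,b)$ (via the isomorphism $\mathbf\Gamma$ of \eqref{Gamma}) confirms $\mathcal P_\lambda f$ satisfies $(\mathcal M_\lambda)$. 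Real-analyticity of $\mathcal P_\lambda f$ in $z$ then follows from real-analyticity of the Poisson kernel together with compactness of $\mathcal B$.

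For injectivity the strategy is to read $f$ off the leading asymptotics of $\mathcal P_\lambda f$ along the Weyl chamber. Substituting the Harish-Chandra expansion \eqref{spherical-formula} that underlies $\varphi_\lambda$ into the convolution definition \eqref{definition-Poisson}, one expects an asymptotic expansion of the shape
$$
(\mathcal P_\lambda f)(ka_t) \;=\; \sum_{\omega \in W} a_t^{\omega\lambda - \rho}\,\mathbf c(\omega\lambda)\,[T_\omega f](k) \;+\; \text{subdominant}, \qquad t\to\infty \text{ in } \mathfrak a^+,
$$
where the $T_\omega$ are Knapp--Stein-type intertwiners on $B(\mathcal B)$. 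For $\re\lambda$ deep in the positive chamber the $\omega = e$ term dominates and recovers $\mathbf c(\lambda) f$; injectivity for generic $\lambda$ (away from the zeros and poles of $\mathbf c$) then follows by meromorphic continuation in $\lambda$.

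Surjectivity is the substantive step and is where the regular-singularity theory of \cite{KKMOOT} is indispensable. Via $\mathbf\Gamma$ and the formula \eqref{RK} for $R_K(\Delta_{\mathcal X})$, the system $(\mathcal M_\lambda)$ rewrites on $A^+ \cdot \mathcal B$ as a holonomic system of PDE with regular singularities along the walls $\{\alpha_i = 0\}$ of the Weyl chamber, with indicial roots precisely $\{\omega\lambda - \rho : \omega \in W\}$. For $\lambda$ outside the resonance locus, the multivariate Frobenius theory produces a basis of convergent formal solutions whose leading coefficients are hyperfunctions on $\mathcal B$; global real-analyticity of $u$ on $\mathcal X$ forces the $W$-symmetry that collapses these data to a single hyperfunction $f$ on $\mathcal B$, and matching leading terms with the computation of the previous paragraph shows $u = \mathcal P_\lambda f$.

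The main obstacle is making the boundary-value extraction rigorous in the hyperfunction category: this requires Sato--Kawai--Kashiwara microlocal analysis to show the characteristic variety of $(\mathcal M_\lambda)$ is regular along the boundary strata, so that real-analytic global solutions on $\mathcal X$ produce genuine hyperfunction (rather than merely distributional or formal) boundary data on $\mathcal B$. A secondary obstacle is specifying the word \emph{generic}: one needs $\lambda$ outside the zero locus of $\prod_{\omega\in W}\mathbf c(\omega\lambda)$ together with the discrete set where pairs of indicial roots $\omega\lambda - \omega'\lambda$ lie in the nonnegative integer span of $\Lambda^+(\mathfrak g,\mathfrak a)$, since at these values the formal expansions either develop logarithmic terms or cease to depend bijectively on the hyperfunction boundary data.
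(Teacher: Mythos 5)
Your proposal correctly reconstructs the argument of \cite{KKMOOT} that the paper cites without proof: the paper merely states the theorem with a reference and then develops in Subsections 4.1--4.2 precisely the regular-singularity machinery you invoke, where bijectivity is obtained by matching the indicial roots $\{\omega\lambda-\rho\}_{\omega\in W}$ of the holonomic system $(\mathcal M_\lambda)$ at the boundary and extracting the hyperfunction coefficient of the leading ($\omega=e$) term, up to the $\mathbf c$-function factor. Your genericity conditions also agree with the paper's \eqref{hypothesis-lambda} (non-integrality of differences of characteristic exponents) supplemented by the nonvanishing of $\prod_{\omega\in W}\mathbf c(\omega\lambda)$, so the sketch is a faithful expansion of the approach the paper defers to the literature.
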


 This was shown for the rank one case and conjectured for the higher rank in \cite{Helgason:duality}, and later proved in general in \cite{KKMOOT}.   The proof relies on the study of systems of differential equations with regular singularities and their boundary values \cite{Kashiwara-Oshima:regular-singularities} (see the next subsections for more details).

The subsequent paper \cite{Oshima-Sekiguchi} characterizes when boundary values are actually distributions, under some exponential decay assumptions for $u$.
We note that in this context a natural question is in which sense the boundary values are attained. This problem has been addressed by several authors \cite{Koriyaniboundary1, Koriyaniboundary2, KoriyaneiHelgasonboundary, Knappboundary, Michelson, Sjoren,Schlichtkrull:paper}. One of such results stated that
 when $f$ is a continuous function,  then $f$ can be realized as the transversal limit of the ratio $u/\varphi_\lambda$, where $\varphi_\lambda$ is a spherical function and the results holds true a.e. when $f\in L^\infty$ (\cite{Michelson}). Different types of convergence (weak, $L^p$, etc.) have been studied, and there is an extensive bibliography.

\subsection{Systems of differential equations with regular singularities}

Consider a system of $r$ differential equations
$$(\mathcal M): \, L_j u=0, \quad j=1,\ldots,r,$$
defined on a neighborhood $V$ of the origin in $\mathbb R^{n+r}$, where each $P_j$ is an analytic differential operator of order $\sigma_j$. We define the chamber as $$V_+=\{(x_1,\ldots,x_n,t_1,\ldots,t_r)\in V\,:\, t_1,\ldots,t_r>0\},$$ the walls  $X_j=\{(x_1,\ldots,x_n,t_1,\ldots,t_r)\in V \,: \, t_i=0\}$, $j=1,\ldots, r$, and the edge $X=\{(x_1,\ldots,x_n,0,\ldots,0)\in V\}$. Assume also that the $L_j$ commute with each other.

\begin{defi}
  The system $(\mathcal M)$ is said to have a regular singularity along the walls $\{X_1,\ldots,X_r\}$ with edge $X$ if the following conditions hold, for each $j=1,\ldots,r$:
  \begin{itemize}
  \item[i.] $L_j$ is of the form $L_j(x,t,t\partial_x,t\partial_t)$ where, for each point $(x,t)$, $P_j(x,t,v,s)$ is a polynomial in $v\in\mathbb C^{nr}$ and $s\in\mathbb C^r$, and we have defined
$t\partial_x=(t_\alpha\partial_{x_\beta})$, $\alpha=1,\ldots,r$, $\beta=1,\ldots,n$ and $t\partial_t=(t_1\partial_{t_1},\ldots, t_r\partial_{t_r})$.

\item[ii.] The degree of $a_j(x,s):=L_j(x,0,0,s)$, which we call the inditial polynomial of $L_j$, is $\sigma_j$ for all $x$. Also, let $a_j^0(x,s)$ be the homogeneous part of degree $\sigma_j$ with respect to $s$ of $a_j(x,s)$. Then the roots of the equations $a_j^0(x,s)=0$ with respect to $s$ consist of $s=0$ only, for any $x$.
  \end{itemize}
\end{defi}

In this setting, if
$$P_j(x,t,v,s)=\sum a^j_{i_1,\ldots,i_{nr},k_1,\ldots,k_r}(x,t)v_1^{i_1}\ldots v_{nr}^{i_{nr}}s_1^{k_1}\ldots,s_r^{k_r},$$
then
$$L_j=\sum a^j_{i_1,\ldots,i_{nr},k_1,\ldots,k_r}(x,t)(t_1\partial_{x_1})^{i_1}\ldots (t_r\partial_{x_n})^{i_{nr}}(t_1\partial_{t_1})^{k_1}\ldots,(t_r\partial_{t_r})^{k_r}.$$
The roots $s(x)\in\mathbb C^r$ to the system of equations
$$a_1(x,s)=\ldots=a_r(x,s)=0$$
are called the \emph{characteristic exponents} of $(\mathcal M)$.\\

The main idea in \cite{KKMOOT} is that the system $(\mathcal M_\lambda)$ from \eqref{strong system} can be put into this form.
This system has regular singularities along the Weyl chamber walls with edge $\mathcal B\times \{0\}$. Moreover,
the characteristic exponents  are given by
$$s_\omega=(s_{\omega,1},\ldots,s_{\omega,r}),\quad \text{where}\quad s_{\omega,j}=(\rho-\omega\lambda)(H_j),\,\,j=1,\ldots,r.$$
In the following, we only consider $\lambda\in\mathfrak a^*_{\mathbb C}$ satisfying that, for each $\omega\in W$ not equal to the identity, where $W$ is the Weyl chamber,  there exists $i\in\{1,\ldots,r\}$ such that
\begin{equation}\label{hypothesis-lambda}(\omega\lambda-\lambda)(H_j) \not\in \mathbb Z,\end{equation}
so that the difference of two characteristic exponents does not belong to $\mathbb Z^r$. We denote the set of such $\lambda$ by $\tilde{\mathfrak a}^*$. This is needed in order to easily construct enough linearly independent solutions of $(\mathcal M)$ in order to build the general solution for the problem (compare to \eqref{expansion-extension}). Once $u$ is in this form, one is able to define boundary values as we do in Section
\ref{subsection:intertwining}.
Note that condition \eqref{hypothesis-lambda} is equivalent to
$$\frac{2\langle \lambda,\alpha\rangle}{\langle \alpha,\alpha\rangle}\not\in\mathbb Z\quad \text{for all}\quad \alpha\in \Lambda.$$

Before we continue with the exposition, let us look at the model case of symmetric functions. We denote
$$t^{s_\omega}=t_1^{s_{\omega,1}}\ldots t_r^{s_{\omega,r}}.$$
As we have mentioned, the symmetric functions are solutions for $(\mathcal M_\lambda)$. Moreover, we can give a more precise expansion for \eqref{spherical-formula}: for each $\omega\in W$ there exists a function $h_\omega(\lambda,t)$, defined on a neighborhood of $\tilde{\mathfrak a}^*\times\{0\}$ in $\tilde{\mathfrak a}^*\times\mathbb R^r_+$, holomorphic in $\lambda$ and real analytic in $t$, and with $h_\omega(\lambda,0)=\mathbf c(\omega\lambda)$, such that
$$\varphi_\lambda(a_tK)=\sum_{\omega\in W} h_\omega(\lambda,t)\, a_t^{\omega\lambda-\rho},$$
which, with the above identification, is written as
$$\varphi_\lambda(t)=\sum_{\omega\in W} h_\omega(\lambda,t) \,t^{s_{\omega}}.$$
This expansion is also true for any solution of $(\mathcal M_\lambda)$. Indeed,

\begin{thm}[\cite{KKMOOT}] Any solution $u\in\mathcal A(\mathcal X,\mathcal M_\lambda)$ has a formal expansion
\begin{equation}\label{expansion-extension}u=\sum_{\omega\in W} Q_{\omega}(x,D_x,D_t)\phi_{\omega\lambda}(x)\,t^{s_\omega},\end{equation}
where $Q_{\omega}(x,D_x,D_t)$ is a unique pseudo-differential operator of order 0 such that its principal symbol $\sigma(Q_\mu)(x,0,s)$ is equal to the constant 1 for $s_j\neq 0$, $j=1,\ldots,r$.
\end{thm}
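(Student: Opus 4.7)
My plan is to adapt the Frobenius method for systems of partial differential equations with regular singularities along a system of hypersurfaces, following the framework of Kashiwara--Oshima that underlies \cite{KKMOOT}. The output will be exactly $|W|$ formal branches indexed by the characteristic exponents $\{s_\omega\}_{\omega\in W}$, from which the operators $Q_\omega$ will be read off.

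First I would seek, for each $\omega\in W$, a formal solution of the ansatz form
$$u_\omega(x,t)=t^{s_\omega}v_\omega(x,t),\qquad v_\omega(x,t)=\sum_{m\in\mathbb{N}^r}v_{\omega,m}(x)\,t^m,$$
with $v_{\omega,m}$ real-analytic on $\mathcal{B}$. Because the system $(\mathcal M_\lambda)$ has regular singularities in the sense recalled above, each $L_j$ has the form $L_j(x,t,t\partial_x,t\partial_t)$ with indicial polynomial $a_j(x,s)=L_j(x,0,0,s)$. Plugging the ansatz into $L_j u_\omega=0$ and matching the leading power of $t$ forces $a_j(x,s_\omega)=0$ for every $j$; since the zero set of the system $\{a_1=\cdots=a_r=0\}$ consists precisely of the characteristic exponents $s_{\omega'}=((\rho-\omega'\lambda)(H_1),\ldots,(\rho-\omega'\lambda)(H_r))$, this fixes the leading behavior and leaves $v_{\omega,0}(x)$ a free boundary datum, which I will denote $\phi_{\omega\lambda}(x)$.

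Second, matching the coefficient of $t^{s_\omega+m}$ yields, for each $m\in\mathbb{N}^r\setminus\{0\}$, a linear system
$$a_j(x,s_\omega+m)\,v_{\omega,m}(x)=F_{j,\omega,m}\!\big(x;\{v_{\omega,m'}\}_{m'<m}\big),\qquad j=1,\ldots,r,$$
where the right-hand side is an $x$-differential polynomial in the previously determined coefficients. Overdetermined compatibility is guaranteed by the commutativity of the $L_j$, which in turn descends from the commutativity of $\mathbf D(\mathcal X)$. The hypothesis \eqref{hypothesis-lambda} rules out $s_\omega+m=s_{\omega'}$ for any $\omega'\in W$ and any nonzero $m$, so the scalar factor $a_j(x,s_\omega+m)$ is invertible (as an operator acting on the analytic coefficient $v_{\omega,m}$) and the recursion is uniquely solvable. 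This produces, for each $\omega$, a well-defined differential operator $P_{\omega,m}(x,D_x)$ on $\mathcal B$ such that $v_{\omega,m}(x)=P_{\omega,m}(x,D_x)\phi_{\omega\lambda}(x)$, with $P_{\omega,0}=\mathrm{Id}$. Convergence of $\sum_m v_{\omega,m}(x)t^m$ on a neighborhood of the edge $\mathcal{B}\times\{0\}$ inside $\mathcal{B}\cdot A^+$ follows from the majorant estimates in \cite{Kashiwara-Oshima:regular-singularities}, which handle regular-singular systems precisely in this noncommutative, multi-variable setting.

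Third, the $|W|$ analytic solutions $u_\omega$ are linearly independent because their leading branches $t^{s_\omega}$ are distinguishable under \eqref{hypothesis-lambda}; a dimension count shows they span $\mathcal A(\mathcal X,\mathcal M_\lambda)$, so every analytic solution admits a unique decomposition $u=\sum_\omega u_\omega$. Assembling $Q_\omega(x,D_x,D_t):=\sum_{m}P_{\omega,m}(x,D_x)\,t^m$ gives the claimed representation
$$u=\sum_{\omega\in W}Q_\omega(x,D_x,D_t)\,\phi_{\omega\lambda}(x)\,t^{s_\omega},$$
and the normalization $P_{\omega,0}=\mathrm{Id}$ yields $\sigma(Q_\omega)(x,0,s)=1$ when $s_j\neq 0$. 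Uniqueness of $Q_\omega$ follows from the unique solvability of the Frobenius recursion together with \eqref{hypothesis-lambda}.

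The main obstacle is \emph{not} the algebraic Frobenius recursion, which is straightforward given the explicit form of $R_K(D)$ and the indicial data, but the \emph{analytic} step: proving convergence of the formal series and verifying that the resulting $Q_\omega$ is a genuine pseudo-differential operator with the stated order and principal symbol. This is the content of the regular-singularities theorem in \cite{Kashiwara-Oshima:regular-singularities} and requires controlling the iterated inverses $a_j(x,s_\omega+m)^{-1}$ uniformly as $|m|\to\infty$ together with the growth of $P_{\omega,m}$ in order, which is the technical heart of the argument.
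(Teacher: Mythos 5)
The paper does not actually prove this theorem; it is quoted from \cite{KKMOOT} and used as a black box, so there is no internal proof to compare against. Your sketch does reproduce the \emph{existence} half of the picture faithfully: the Frobenius ansatz $u_\omega = t^{s_\omega}\sum_m v_{\omega,m}(x)t^m$, the indicial computation identifying the exponents $s_\omega$, the order-by-order recursion that is unobstructed under hypothesis \eqref{hypothesis-lambda} (no resonances $s_\omega + m = s_{\omega'}$), and the identification of the free leading coefficient with the boundary datum $\phi_{\omega\lambda}$. That is the correct skeleton and is indeed how the expansion is assembled.

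The genuine gap is in the \emph{universality} step, which is the actual content of the theorem. You assert that ``a dimension count shows they span $\mathcal A(\mathcal X,\mathcal M_\lambda)$,'' but this is an infinite-dimensional situation: each of the $|W|$ branches is parameterized by a whole space of hyperfunctions on $\mathcal B$, not a scalar, so no finite dimension count is available. Showing that \emph{every} analytic solution of the (overdetermined, in general non-elliptic) system $(\mathcal M_\lambda)$ on $\mathcal X$ admits an asymptotic decomposition into exactly these $|W|$ branches near the edge $\mathcal B\times\{0\}$ is precisely the deep part of \cite{Kashiwara-Oshima:regular-singularities}: one needs to construct a boundary value map and prove it is bijective, which is done by microlocal/sheaf-theoretic methods, not by a Frobenius recursion plus a span argument. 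This is also where the pseudo-differential (micro-differential) nature of $Q_\omega$ actually comes from; the formal series $\sum_m P_{\omega,m}(x,D_x)t^m$ produced by the recursion is not automatically a $\Psi$DO of order $0$ with principal symbol $1$, and your final sentence correctly flags that controlling the growth of the $P_{\omega,m}$ in order is required, but that estimate is itself a theorem of the Kashiwara--Oshima calculus rather than something you can invoke after the fact. In short, your sketch constructs the branches; it does not yet prove that they exhaust $\mathcal A(\mathcal X,\mathcal M_\lambda)$, and replacing the ``dimension count'' with the boundary-value bijection of \cite{Kashiwara-Oshima:regular-singularities} is the missing idea.
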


\subsection{Boundary values and intertwining operators}
\label{subsection:intertwining}
Recall that $An(\mathcal X,\mathcal M_\lambda)$ denotes the space of real analytic solutions of $(\mathcal M_\lambda)$ on $\mathcal X$ for $\lambda\in\mathfrak a^*_{\mathbb C}$. Then, according to \eqref{expansion-extension}, given $u\in\mathcal A(\mathcal X,\mathcal M_\lambda)$ and $\nu=\omega\lambda$ for $\omega\in W$, we define the boundary map
$$\beta_{\nu} u:=\phi_{\nu}\in B(\mathcal B).$$
Note the inverse of the Poisson transform $\mathcal P_\lambda$ is $\mathbf c_\lambda^{-1}$ times the boundary value map $\beta_\lambda$.

It is also shown in \cite{KKMOOT} that the boundary map $\beta_\nu$ is a $\mathcal G$-homomorphism. This is, the following diagram is commutative
\begin{equation*}\begin{CD}
An(\mathcal X,\mathcal M_\nu)    @>\pi(g)>>
An(\mathcal X,\mathcal M_\nu)\\
@VV\beta_\nu V        @VV\beta_\nu V\\
B(\mathcal B)  @>\pi_\nu(g)>>  B(\mathcal B).
\end{CD}\end{equation*}
This allows to define the boundary map acting on conformal densities, i.e.,
$$\beta^{(\lambda)}:=\bigoplus_{\omega\in W}\beta_{\omega\lambda}:An(\mathcal X,\mathcal M_\lambda) \to \bigoplus_{\omega\in W} B(\mathcal B,L_{\omega\lambda}).$$

Now consider, for each $\omega\in W$, the scattering map is
\begin{equation}\label{scattering}\mathcal S^{(\lambda)}:= \beta^{(\lambda)} \mathcal P_\lambda,\end{equation}
defined componentwise by
$$\mathcal S_{\omega,\lambda}:=\beta_{\omega\lambda} \mathcal P_\lambda,\quad \omega\in W.$$
These are intertwining operators between two density spaces
\begin{equation}\label{intertwining}
\mathcal S_{\omega,\lambda}:B(\mathcal B,L_\lambda) \to B(\mathcal B,L_{\omega\lambda}).
\end{equation}
Trivially, when $\omega$ is the identity element in $W$, we have that $\mathcal S_{e,\lambda}=Id$.\\

We will precisely illustrate all this for the product case in Section \ref{section:asymptotically-product}, where we will provide a simple independent proof.\\

Finally, we worry about the case of systems of PDE with regular singularities for which $\lambda$ does not satisfy \eqref{hypothesis-lambda}. This was considered in \cite{Oshima:polos}. Their idea is that, even though the expansion \eqref{expansion-extension} does not hold since the terms $t^{s_\omega}$ do not give enough linearly independent solutions, there is a related expansion where these powers are replaced by suitable functions with log terms. In any case, one can still define boundary values $\beta u$. We will deal with this issue for an explicit example in Remark \ref{remark-poles}.

\section{Weak solutions, Martin boundary and the Poisson kernel}\label{section:Martin-boundary}

In this section we define weakly $\mu$-harmonic functions and the Martin boundary, and we prove that minimal positive weak  $\mu$-harmonic functions are indeed strong. Most of the results of this section are taken from the books \cite{GJT,Anker-Orsted}, the classical papers \cite{Martin,Karpelevic}, and the survey \cite{Koranyi:survey}.

%\textcolor{red}{todo esto esta mejor escrito en el libro \cite{Anker-Orsted}}

We recall that a function $u$ is strongly $\mu$-harmonic  in $\mathcal X$ if it satisfies the system \eqref{strong system}. In contrast we define:

\begin{defi} \label{def-weak-harmonic}
A function $u$ is weakly  $\mu$-harmonic if
$$-\Delta_{\mathcal X} u-\mu u=0.$$
\end{defi}

The main objective of the Martin compactification is to introduce a notion of boundary that allows us to represent weakly  $\mu$-harmonic in terms of boundary values. The construction presented below was first proposed by Martin in \cite{Martin}.

We assume in the following that $\mu_0$ is the smallest eigenvalue of $-\Delta_{\mathcal X}$ and consider
  $\mu\leq \mu_0$. Fix $x_0\in \mathcal X$ a base point. Assume additionally that $-\Delta_{\mathcal X}-\mu $ admits a Green function $G^{\mu}(x,y)$ with Dirichlet conditions at infinity. Then the \emph{Martin compactification} of $\mathcal X$ respect to $\mu$ (that we denote by $\mathcal X(\mu)$) is the only compactification $\tilde{\mathcal X}$ of $\mathcal X$ with the properties:
\begin{enumerate}
\item For each for each $x\in \mathcal X$ it is possible to extend continuously to $\tilde{\mathcal X}$ the functions $y\to \mathcal K^{\mu}(x,y):=\left\{\begin{array}{cc}\frac{G^{\mu}(x,y)}{G^{\mu}(x_0,y)}& \hbox{ if } x\ne x_0\\ 0 & \hbox{ if } x=x_0 \hbox{ and } y\ne x_0 \\ 1 &\hbox{ if }x=y=x_0\end{array}\right.$ ; \quad and

\item the extended function separates the points of the ideal boundary $\partial\mathcal X(\mu)=\tilde{ \mathcal X}\setminus \mathcal X$.
\end{enumerate}
The function $\mathcal K^\mu$ is called the Martin Kernel.

Each point $y\in \mathcal{X}$ may be uniquely identified with the function $h_y(x)=\mathcal K^{\mu}(x,y)$. Hence, we consider $\mathcal H^\mu_1(\mathcal X)$ be the set of positive functions $u$ for which
 \begin{equation}\label{eigenvalue-equation}
 -\Delta_{\mathcal{X} }u-\mu u=0
  \end{equation}
  and $u(x_0)=1$. The set  $ \mathcal H^\mu_1(\mathcal X)$ is convex and compact in the topology of uniform convergence of compact sets.

The Martin  boundary is the set of functions $h\in \mathcal H^\mu_1(\mathcal X)$ that are limit functions, i.e $h(x)=\lim_n \mathcal K^\mu(x,y_n)$ for some sequence $(y_n)\subset \mathcal X$ that converges to infinity. Under the identification above, it is possible to show that  $\tilde{\mathcal X}$ is metrizable and $\mathcal X$ is open and dense in $\tilde{\mathcal X}$.

According to Martin's work \cite{Martin}, the construction above allows us to represent weakly $\mu$ harmonic functions in term of their boundary values. However, the representation may not be unique. In order to address this issue, we need to introduce the following notion:

\begin{defi} \label{def-minimal-harmonic}
A positive solution  $u$ is minimal if for every other solution $v$ such that $0\leq v\leq u$ we have $v= C u$ for some $0\leq C\leq 1$.
\end{defi}

We remark that all extremal points of $ \mathcal H^\mu_1(\mathcal X)$  belong to the set of limit functions, but not all of them are minimal in the sense of Definition \ref{def-minimal-harmonic}.
The subset of the Martin boundary corresponding to minimal functions, called the set of minimal points, will be denoted by $\partial_e \mathcal X(\mu)$.% That is the functions $\mathcal K^\mu(\cdot,b)$ be the limit function corresponding to a minimal point $b\in \partial_e \mathcal X(\mu)$. The limit function associated to a minimal point  is minimal in the (classical) sense of Definition \ref{def-weak-harmonic}.

 The class of minimal $\mu$-harmonic functions  forms a basis for the positive $\mu$-harmonic functions in $\mathcal X$. More precisely,  every positive solution $u$ of \eqref{eigenvalue-equation} is represented by a unique positive measure $m$ of total mass $u(x_0)$ carried by the set of extremal points of $ \mathcal H^\mu_1(\mathcal X)$, denoted by $\partial_e \mathcal X(\mu)$, as
%This measure will be called the representing measure for $u$.
$$u(z)=\int_{\partial_e \mathcal X(\mu)}\mathcal K^\mu(z,b)\, dm,\quad z\in\mathcal X.$$

Note that the construction above does not require the structure of a symmetric space. On the other hand, in the context of symmetric spaces a
 crucial step is to identify the set $\partial_e \mathcal X(\mu)$ and relate it to the distinguished boundary $\mathcal B$. We first observe the following interesting remark.

\begin{remark}
If $\mathcal X$ is a symmetric space the Green function $G^\mu$ exists for every $x_0$ and by composing with  isometries $x_0$ can be taken as the origin $o$. In the rank 1 case, the Martin compactification coincides with the (standard)  conic compactification. In particular, for
$\mathcal X=\mathbb{H}^{n+1}$ the Martin boundary can be identified with $\mathbb S^{n}$.
\end{remark}

 To identify the minimal boundary   the  case of symmetric spaces of  rank bigger than 1, one needs to take into account the different directions $L$ inside the Weyl chamber. However,  those directions provide different limits only for $\mu<\mu_0$ and do not appear in the Martin compactification $\mathcal X(\mu_0)$. In what follows, we will restrict then to the case $\mu<\mu_0$.\\

First, a classical theorem states that weak minimal $\mu$-harmonic functions are indeed strongly harmonic, i.e., they satisfy the system $(\mathcal M_\lambda)$ for some $\lambda\in (\overline{ \mathfrak a^+})^*$ (see  \cite{Karpelevic}, for a classical reference, and the book \cite{GJT} for a more modern account):

\begin{thm}\label{thm-minimal-solutions}
For $\mu <\mu_0$, the minimal solutions to $$-\Delta_{\mathcal X} u-\mu u=0$$ are given by
\begin{equation}\label{minimal-solutions}
e^{(\rho +\lambda)A(z,b)},
\end{equation}
where $A(z,b) $ is defined by \eqref{composite distance},  $z\in \mathcal X$, $b\in\mathcal  B$, $\rho\in   \mathfrak a^*$ is given by \eqref{defrho} and  $\lambda\in (\overline{ \mathfrak a^+})^*$  is given by $\lambda(H)=c\langle L,H\rangle$ with $c=\sqrt{\mu_0-\mu}$ and the quantity $L$ represents a direction in $\overline{\mathfrak a}^+$ of length 1.
\end{thm}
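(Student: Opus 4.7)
My plan is to split the argument into a direct verification that the listed functions are positive weakly $\mu$-harmonic, followed by a Martin-boundary argument showing they exhaust the minimal ones.

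\emph{Step 1 (verification).} For fixed $b=kM\in\mathcal B$ and $\lambda\in\mathfrak a^*$, put $u(z):=e^{(\rho+\lambda)A(z,b)}$. By the $\mathcal G$-equivariance $A(g\cdot z,g\cdot b)=A(z,b)$ and the $\mathcal G$-invariance of $\Delta_{\mathcal X}$, it suffices to handle $b=eM$; then $u$ only depends on the $A$-component in the Iwasawa decomposition $NAK$, hence is left $N$-invariant. Using the formula $R_N(\Delta_{\mathcal X})=e^{\rho}\Delta_A\circ e^{-\rho}-\langle\rho,\rho\rangle$ and the fact that $a\mapsto a^{\lambda}$ satisfies $\Delta_A(a^\lambda)=\langle\lambda,\lambda\rangle a^\lambda$, one obtains on $A\cdot o$, and then by $N$-invariance everywhere,
$$-\Delta_{\mathcal X}u=\big(\langle\rho,\rho\rangle-\langle\lambda,\lambda\rangle\big)\,u.$$
Since $\mu_0=\langle\rho,\rho\rangle$ is the classical value of the bottom of the $L^2$-spectrum of $-\Delta_{\mathcal X}$, the eigenvalue equals $\mu$ iff $|\lambda|^2=\mu_0-\mu$, which is exactly the condition $\lambda(H)=c\langle L,H\rangle$ with $|L|=1$ and $c=\sqrt{\mu_0-\mu}$. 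Applying the analogous computation for every $D\in\mathbf D(\mathcal X)$ via the Harish-Chandra isomorphism \eqref{Gamma} shows that $u$ is in fact \emph{strongly} $\lambda$-harmonic, i.e., lies in $\mathcal E_\lambda(\mathcal X)$: this extra rigidity will drive minimality below.

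\emph{Step 2 (identification with the Martin boundary).} I would realize each such $u$ as a Martin kernel. Set $\mathcal K^\mu(z,y)=G^\mu(z,y)/G^\mu(o,y)$. The claim is that the ideal points of $\mathcal X(\mu)$ are parameterized by pairs $(b,L)$ with $b\in\mathcal B$ and $L\in\overline{\mathfrak a^+}$ of length one, and that if $y_n=(\exp t_n H_n)\,k_n M$ with $H_n/|H_n|\to L$ strictly inside the Weyl chamber, $k_nM\to b$ and $t_n|H_n|\to\infty$, then
$$\mathcal K^\mu(z,y_n)\longrightarrow e^{(\rho+\lambda)A(z,b)},\qquad \lambda(H)=c\langle L,H\rangle.$$
The convergence follows from a saddle-point (or heat-kernel) asymptotic analysis of the resolvent $(-\Delta_{\mathcal X}-\mu)^{-1}$, exploiting Harish-Chandra's expansion $\varphi_\lambda=\sum_\omega\mathbf c(\omega\lambda)\Phi_{\omega\lambda}$ and the explicit form of $\Phi_\lambda$ along the chosen direction. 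This is the classical Karpelevich/Guivarc'h--Taylor picture of the Martin compactification of a symmetric space below the bottom of the spectrum.

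\emph{Step 3 (minimality, exhaustion, and the main obstacle).} Minimality of each $u_{b,\lambda}$ follows by combining Step 1 with the Martin representation: if $0\le v\le u_{b,\lambda}$ is weakly $\mu$-harmonic, then $v=\int_{\partial_e\mathcal X(\mu)}e^{(\rho+\lambda')A(\cdot,b')}\,dm(b',L')$, but the strong harmonicity of $u_{b,\lambda}$ forces each $D\in\mathbf D(\mathcal X)$ to act by the scalar $\chi_\lambda(D)$ on the representing measure; since characters $\chi_{\lambda'}$ separate directions $L'$, the support collapses to a single $L$, and the $\mathcal G$-orbit structure along that stratum (identified with $\mathcal B$) forces $m$ to be a point mass at $(b,L)$. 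Conversely, every minimal weakly $\mu$-harmonic function appears as an ideal point in $\partial_e\mathcal X(\mu)$ and so is of the claimed form by Step 2. The genuine difficulty is Step 2: the Green-function asymptotic along all admissible directions in $\overline{\mathfrak a^+}$ is multi-parameter and has no rank-one shortcut. This is also the reason the theorem restricts to $\mu<\mu_0$: at $\mu=\mu_0$, $\lambda=0$, the direction $L$ no longer separates ideal points, and the minimal Martin boundary collapses onto $\mathcal B$.
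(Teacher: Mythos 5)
The paper does not supply a proof of this theorem. It is cited as classical, with pointers to Karpelevi\v{c} and the book of Guivarc'h--Ji--Taylor, so there is no internal argument to compare against; your proposal is really a reconstruction attempt of that classical proof.

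Step 1 is fine: the radial-part computation with $R_N(\Delta_{\mathcal X})=e^{\rho}\Delta_A\circ e^{-\rho}-\langle\rho,\rho\rangle$ applied to $a\mapsto a^{\rho+\lambda}$ gives eigenvalue $\langle\rho,\rho\rangle-\langle\lambda,\lambda\rangle$, so the condition $\mu=\mu_0-|\lambda|^2$ is exactly as stated, and the same computation via the Harish--Chandra isomorphism $\mathbf\Gamma$ shows that these exponentials are in fact strongly $\lambda$-harmonic. You are also right that Step 2, the Green-function asymptotics along all directions in the closed chamber, is the hard technical core of Karpelevi\v{c}'s theorem; naming it without proving it is an honest acknowledgment of the gap, but it is the gap.

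Step 3 contains a genuine logical error. You begin from $0\le v\le u_{b,\lambda}$ weakly $\mu$-harmonic and write $v$ as an integral over $\partial_e\mathcal X(\mu)$ --- but $\partial_e\mathcal X(\mu)$ is the set of \emph{minimal} points, which is precisely what the theorem is meant to identify, so this is circular. Moreover, the assertion that ``the strong harmonicity of $u_{b,\lambda}$ forces each $D\in\mathbf D(\mathcal X)$ to act by the scalar $\chi_\lambda(D)$ on the representing measure'' conflates the two functions: domination $v\le u_{b,\lambda}$ does not transfer the eigenvalue structure of $u_{b,\lambda}$ to $v$, since $v$ is a priori only an eigenfunction of $\Delta_{\mathcal X}$, not of the whole algebra. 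The classical argument proceeds in the reverse direction and avoids the circularity: one first shows that any \emph{minimal} positive $\mu$-harmonic $u$ must be strongly $\lambda$-harmonic for some $\lambda$ (roughly, because every $D\in\mathbf D(\mathcal X)$ commutes with $\Delta_{\mathcal X}$, and a suitable real combination of $Du$ with $u$ is nonnegative, dominated by a multiple of $u$, hence by minimality a scalar multiple of $u$; iterating forces $u\in\mathcal E_\lambda(\mathcal X)$). Only then does one apply the Poisson-transform characterization of $\mathcal E_\lambda(\mathcal X)$: a positive minimal strongly $\lambda$-harmonic function is the Poisson integral of a point mass, hence a single exponential $e^{(\rho+\lambda)A(\cdot,b)}$. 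You would need this ``minimal $\Rightarrow$ strong'' lemma to close the argument, and Step 1 as written does not supply it.
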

We thus define the Poisson kernel as in \eqref{Poisson-kernel},
and the set of directions,  for  $c=\sqrt{\mu_0-\mu} $,
$$S_\mu=\{\lambda\in (\overline{ \mathfrak a^+})^* \,: \,\lambda(H)=c\langle L,H\rangle,\, L \text{ is a direction in }\overline{\mathfrak a}^+ \text{ of length } 1\}.$$
As a consequence, all (weakly) $\mu$-harmonic functions on $\mathcal X$ are obtained as
\begin{equation}u(z)=\doubleint_{\mathcal B\times{S_\mu}} P_\lambda(z,b) \,dm(b,\lambda),\quad z\in\mathcal X.\label{rep formula}\end{equation}

In the particular case that $\mu =0$, it is well known  \cite{Furstenberg} that bounded solutions of $-\Delta_{\mathcal X}u=0$ are in bijection with $L^\infty (m_1)$, where $m_1$ is the measure that represents the constant function 1. For a general $\mu\neq 0$ this is not true in general since the set of directions $S_\mu$ is non-trivial.\\

Following \cite{Freire}, in the product case (not necessarily a symmetric space) we are able to prove a splitting theorem  in the spirit \eqref{minimal-solutions}, but in a more general setting:

\begin{thm}\label{thm:splitting}
Let $\mathcal X = \mathcal X_1\times \mathcal X_2$ be a Riemannian product, where $\mathcal X_1$  and
$\mathcal X_2$ are complete, noncompact, with Ricci curvature bounded below. Then
the following hold:
\begin{itemize}
\item[i.] Each minimal positive $\mu$-harmonic function $u$ on $\mathcal X$ splits as a product
$$u(z_1,z_2) = u_1^{\mu_1}(z_1)u_2^{\mu_2}(z_2),$$
where $\mu_i \leq \mu_0$, $u_i^{\mu_i}$ is a minimal positive $\mu_i$-harmonic function on $\mathcal X_i$, $i=1,2$, and $\mu_1+\mu_2=\mu$.
\item[ii.] Conversely, each product as above is a minimal positive $\mu$-harmonic
function on $\mathcal X$.
\end{itemize}
\end{thm}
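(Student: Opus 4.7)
I treat the two statements in turn.

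For part (ii), the easy half is a direct computation: since the Laplace--Beltrami operator on a Riemannian product splits as $\Delta_{\mathcal X} = \Delta_{\mathcal X_1} + \Delta_{\mathcal X_2}$,
\begin{equation*}
-\Delta_{\mathcal X}\bigl(u_1^{\mu_1}(z_1)\, u_2^{\mu_2}(z_2)\bigr)
 = u_2^{\mu_2}\,(-\Delta_{\mathcal X_1} u_1^{\mu_1}) + u_1^{\mu_1}\,(-\Delta_{\mathcal X_2} u_2^{\mu_2})
 = (\mu_1+\mu_2)\, u_1^{\mu_1} u_2^{\mu_2},
\end{equation*}
so the product is $\mu$-harmonic whenever $\mu_1+\mu_2 = \mu$. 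Minimality of the product on $\mathcal X$ will drop out of the Martin representation established in part (i) below: any positive $\mu$-harmonic function dominated by $u_1^{\mu_1} u_2^{\mu_2}$ has Choquet decomposition concentrated at the single pair of boundary points to which the extremal factors correspond, hence is a scalar multiple of the product.

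For part (i), let $u$ be minimal positive $\mu$-harmonic on $\mathcal X$. The core step is to show that $u$ is an eigenfunction of $\Delta_{\mathcal X_2}$ in the second variable. Let $P_t^{(2)}$ denote the heat semigroup on $\mathcal X_2$, which is well defined under completeness and the Ricci lower bound, and act with it on the second variable via $P_t^{(2)} u(z_1,z_2) = \int_{\mathcal X_2} p_t^{(2)}(z_2,w)\, u(z_1,w)\, dw$. Since $\Delta_{\mathcal X_1}$ commutes with $P_t^{(2)}$ and $(\Delta_{\mathcal X_1} + \mu)u = -\Delta_{\mathcal X_2} u$ by $\mu$-harmonicity, one checks directly that $P_t^{(2)} u$ is again positive $\mu$-harmonic on $\mathcal X$. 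The Gaussian upper bound for $p_t^{(2)}$ (Li--Yau, using Ricci bounded below) combined with the elliptic Harnack inequality for positive $\mu$-harmonic functions on $\mathcal X$ yields a comparison $P_t^{(2)} u \le C(t)\, u$. Minimality of $u$ then forces $P_t^{(2)} u = c(t)\, u$, and the semigroup law gives $c(t) = e^{-\mu_2 t}$ for some $\mu_2 \in \mathbb R$. Differentiating at $t = 0$ yields $-\Delta_{\mathcal X_2} u = \mu_2\, u$ for each fixed $z_1$, and substituting back into the $\mu$-harmonic equation gives $-\Delta_{\mathcal X_1} u = \mu_1\, u$ with $\mu_1 := \mu - \mu_2$.

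To promote these separate eigenfunction properties to a product form, I apply the Martin representation fiberwise. For each fixed $z_1$, $u(z_1, \cdot)$ is positive $\mu_2$-harmonic on $\mathcal X_2$, so
\begin{equation*}
 u(z_1, z_2) = \int_{\partial_e \mathcal X_2(\mu_2)} \mathcal K^{\mu_2}(z_2, \eta)\, d\nu_{z_1}(\eta)
\end{equation*}
for a unique positive measure $\nu_{z_1}$. The $\mu_1$-harmonicity of $u$ in $z_1$ combined with uniqueness of the Martin representation forces the assignment $z_1 \mapsto \nu_{z_1}$ to be $\mu_1$-harmonic in the measure-valued sense: for each Borel $E$, the function $z_1 \mapsto \nu_{z_1}(E)$ is positive $\mu_1$-harmonic on $\mathcal X_1$. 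Expanding it in turn by the Martin kernel $\mathcal K^{\mu_1}$ and combining via Fubini gives
\begin{equation*}
 u(z_1, z_2) = \iint_{\partial_e \mathcal X_1(\mu_1)\times \partial_e \mathcal X_2(\mu_2)} \mathcal K^{\mu_1}(z_1, \xi)\, \mathcal K^{\mu_2}(z_2, \eta)\, d\rho(\xi, \eta)
\end{equation*}
for a positive measure $\rho$. Each integrand is positive $\mu$-harmonic by the computation of part (ii) and minimal on $\mathcal X$ (this is the step where parts (i) and (ii) are proved jointly, closing the small logical loop). Hence the above is the Choquet decomposition of $u$ into extremals, and extremality of $u$ forces $\rho$ to be a Dirac mass at a single point $(\xi_0, \eta_0)$, producing the desired product splitting $u(z_1,z_2) = c\, \mathcal K^{\mu_1}(z_1,\xi_0)\, \mathcal K^{\mu_2}(z_2,\eta_0)$ into minimal positive $\mu_i$-harmonic factors.

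The main obstacle is the comparability $P_t^{(2)} u \le C(t)\, u$: controlling the heat semigroup's action on $u$ globally requires a Gaussian upper heat kernel bound together with a Harnack inequality with exponentially-controlled constants, and this is precisely where the Ricci lower bound and completeness of the factors enter decisively. A subsidiary issue is showing that products $\mathcal K^{\mu_1}\, \mathcal K^{\mu_2}$ of minimal factors are themselves minimal on $\mathcal X$; this is handled by running the Martin-representation argument in reverse to obtain both directions simultaneously.
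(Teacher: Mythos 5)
The paper's ``proof'' is a single sentence: ``It is a simple modification of \cite{Freire}.'' So there is no detailed argument in the paper to compare against; what you have done is essentially reconstruct Freire's proof from scratch, and in outline your reconstruction is faithful to it. The heat-semigroup step---acting with $P_t^{(2)}$ on the second variable, checking it preserves positive $\mu$-harmonicity via $\Delta_{\mathcal X}=\Delta_1+\Delta_2$, bounding $P_t^{(2)}u\le C(t)u$ via the Yau-type Harnack inequality (so $u(z_1,w)\le e^{Cd(w,z_2)}u(z_1,z_2)$) together with Li--Yau Gaussian upper bounds and Bishop--Gromov volume control, then invoking minimality to force $P_t^{(2)}u=c(t)u$ with $c(t)=e^{-\mu_2 t}$---is exactly the engine of Freire's argument, and the Ricci lower bound and completeness hypotheses enter precisely where you say they do.

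The one place where you should be more careful is the final passage from the double Martin representation
\begin{equation*}
u(z_1,z_2)=\iint \mathcal K^{\mu_1}(z_1,\xi)\,\mathcal K^{\mu_2}(z_2,\eta)\,d\rho(\xi,\eta)
\end{equation*}
to the conclusion that $\rho$ is a point mass. You justify this by asserting that the products $\mathcal K^{\mu_1}\mathcal K^{\mu_2}$ are the extremals of the cone, which is exactly part (ii) that you have not yet proved, and you acknowledge the circularity but defer it to a vague ``run in reverse'' remark. You do not actually need the extremality of the integrands. Instead, argue directly from the minimality of $u$: for any Borel set $E\subset\partial_e\mathcal X_1(\mu_1)\times\partial_e\mathcal X_2(\mu_2)$, set $u_E:=\iint_E\mathcal K^{\mu_1}\mathcal K^{\mu_2}\,d\rho$. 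Then $0\le u_E\le u$ and $u_E$ is $\mu$-harmonic (each integrand is, by your part (ii) computation, which does not use minimality), so minimality of $u$ gives $u_E=c(E)u$ with $c(E)\in[0,1]$. Uniqueness of the representation then forces $\rho|_E=c(E)\rho$, hence $c(E\cap F)=c(E)c(F)$: $c$ is a multiplicative probability measure and therefore a Dirac mass, and so is $\rho$. This closes the loop cleanly, and once the splitting of every minimal $u$ is established, minimality of products of minimal factors follows from the now-identified Martin boundary of $\mathcal X$, completing part (ii). Apart from this, the proposal is a correct implementation of the cited approach.
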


\begin{proof}
It is a simple modification of \cite{Freire}.
\end{proof}

\begin{remark}
In fact, following the proof of \cite{Freire} it is possible to prove that functions that have the appropriate decay at $\partial \mathcal X$ split as product. Minimal functions are a particular case of such behavior. We will elaborate further in Theorem \ref{thm:expansion2}.
\end{remark}

Let us give the explicit formulas in \eqref{minimal-solutions} for the simplest rank two symmetric space, the product of two hyperbolic spaces.

\subsection{The product of two hyperbolic spaces  $ \mathbb H^{n_1+1}\times  \mathbb H^{n_2+1}$}\label{subsection:product-hyperbolic}

As an introduction,  we consider in this section the product of two hyperbolic spaces, although the results here could be generalized to other settings. We write $N_1=n_1+1$, $N_2=n_2+1$, and set $\mathcal X=\mathbb H^{N_1}\times  \mathbb H^{N_2}$. We parameterize each hyperbolic space by the coordinates $z_i=(b_i,y_i)$, $b_i\in\partial\mathbb H^{N_i}$, $y_i=\dist(z_i,\partial\mathbb H^{N_i})\in\mathbb R_+$, $i=1,2$.

 First we survey the results of \cite{Giulini-Woess}, where the full Martin boundary is computed for  this case. The idea, as above, is that the minimal $\mu-$harmonic functions  \eqref{minimal-solutions} split as a product of minimal $\mu_i-$harmonic functions on each hyperbolic space. In particular, we have that the invariant differential operators are  generated by $\Delta_i=\Delta_{\mathbb H^{N_i}}, \, i=1,2.$

The Laplace Beltrami operator $\Delta_{\mathcal X}$ on  is given by $$\Delta_{\mathcal X}=\Delta_{1}+\Delta_{2} \quad\text{where}\quad \Delta_i=\Delta_{\mathbb H^{N_i}}, \, i=1,2.$$
The bottom of the spectrum is $\mu_0=\left(\frac{n_1}{2}\right)^2+\left(\frac{n_2}{2}\right)^2$.
For $\mu<\mu_0$ we define the set of directions
$$S_\mu=\left\{\ell=(\mu_1,\mu_2):\mu_1\leq \left(\tfrac{n_1}{2}\right)^2,\, \mu_2\leq \left(\tfrac{n_2}{2}\right)^2, \mu_1+\mu_2=\mu\right\}.$$
We will use the following notation
\begin{equation}\label{notation1}\gamma_1:=\sqrt{\lp\tfrac{n_1}{2}\rp^2-\mu_1},\quad\gamma_2:=\sqrt{\lp\tfrac{n_2}{2}\rp^2-\mu_2},\end{equation}
so $\gamma:=(\gamma_1,\gamma_2)$ has length $|\gamma|=\sqrt{\mu_0-\mu}$.
Note that, here, a direction $\ell=(\mu_1,\mu_2)$ corresponds to a unitary direction $L=\frac{\gamma}{|\gamma|}$ from Theorem \ref{thm-minimal-solutions}.

While the Furstenberg boundary reduces to $\mathcal B= \partial \mathbb H^{N_1}\cup \partial \mathbb H^{N_2}$, the Martin boundary consists of three pieces:
\begin{equation}\label{three-pieces}(\partial \mathbb H^{N_1}\times \mathbb H^{N_2})\cup (\partial \mathbb H^{N_1}\times\partial\mathbb H^{N_2}\times S_\mu)\cup
(\mathbb H^{N_1}\times \partial\mathbb H^{N_2}).\end{equation}
The first and the third piece consist of the non-minimal points while, for the second, we have:
\begin{thm}[\cite{Giulini-Woess}]
The minimal functions in $\mathcal H^+(\mu)$ are given, in terms of the Poisson kernel for hyperbolic space \eqref{Poisson-ball}, by
\begin{equation}P_{\ell}(z,b)=P^{\frac{n_1}{2}+\gamma_1}(z_1,b_1) P^{\frac{n_2}{2}+\gamma_2}(z_2,b_2),\label{product poisson} \end{equation}
with $b=(b_1,b_2)\in \partial \hh^{N_1}\times \partial \hh^{N_2} $, $z_1\in \mathbb H^{N_1}$, $z_2\in\mathbb H^{N_2}$, $\ell=(\mu_1,\mu_2).$
\end{thm}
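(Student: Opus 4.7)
The plan is to combine the abstract splitting result (Theorem~\ref{thm:splitting}) with the known classification of minimal positive $\mu_i$-harmonic functions on each hyperbolic factor. First, apply Theorem~\ref{thm:splitting} with $\mathcal X_1=\mathbb H^{N_1}$, $\mathcal X_2=\mathbb H^{N_2}$ (both are complete, noncompact and of constant negative curvature, hence trivially have Ricci bounded below). This immediately yields that any minimal positive $\mu$-harmonic function $u$ on $\mathcal X$ factors as $u(z_1,z_2)=u_1(z_1)u_2(z_2)$ where $u_i$ is minimal positive $\mu_i$-harmonic on $\mathbb H^{N_i}$, with $\mu_1+\mu_2=\mu$ and $\mu_i\le (n_i/2)^2=\mu_0^{(i)}$.

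Next, I would identify the minimal positive $\mu_i$-harmonic functions on $\mathbb H^{N_i}$. This is the classical rank-one Furstenberg/Karpelevi\v{c} result already recalled in Subsection~\ref{subsection:hyperbolic-space}: in the Poincar\'e ball model, the minimal positive eigenfunctions of $-\Delta_{\mathbb H^{N_i}}$ with eigenvalue $\mu_i$ are exactly the plane waves, i.e.\ constant multiples of $P^{\frac{n_i}{2}+\gamma_i}(z_i,b_i)$ for $b_i\in\partial\mathbb H^{N_i}$, where $\gamma_i=\sqrt{(n_i/2)^2-\mu_i}\ge 0$, as in \eqref{Poisson-ball} and Theorem~\ref{thm-minimal-solutions} specialized to rank one. (Equivalently, these are the functions $e^{(\rho_i+\lambda_i)A_i(z_i,b_i)}$ appearing in \eqref{minimal-solutions} for the symmetric space $\mathbb H^{N_i}$.)

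Combining the two steps, each minimal positive $\mu$-harmonic function on $\mathcal X$ has the form
\[
u(z_1,z_2)=C\, P^{\frac{n_1}{2}+\gamma_1}(z_1,b_1)\,P^{\frac{n_2}{2}+\gamma_2}(z_2,b_2)
\]
for some $b_i\in\partial\mathbb H^{N_i}$ and a normalizing constant $C>0$. The constraint $\mu_1+\mu_2=\mu$ translates, via \eqref{notation1}, into $\gamma_1^2+\gamma_2^2=\mu_0-\mu$ together with $\gamma_i\ge 0$, which is exactly the condition defining the parameter set $S_\mu$. Normalizing $C=1$ (so that $P_\ell(o,b)=1$ at a chosen base point) gives the claimed formula \eqref{product poisson}. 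The converse direction---that every such product is minimal positive $\mu$-harmonic---is the second part of Theorem~\ref{thm:splitting} together with the minimality of each factor on $\mathbb H^{N_i}$.

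The one delicate point, and the only place where serious analysis enters, is the application of the splitting theorem: one must verify that the decay/integrability hypothesis used in Freire's argument is compatible with minimality. For a minimal positive solution on a Riemannian product this reduces to showing that the two ``marginal'' integrals obtained by separating variables are each minimal on their factor, which is the content of Theorem~\ref{thm:splitting} and the remark following it. Once splitting is granted, the rest is a direct bookkeeping of parameters using the rank-one theory. The remaining two pieces of the Martin boundary in \eqref{three-pieces} are then accounted for because products with one non-minimal factor decomposing over an additional parameter collapse to non-minimal limits, consistent with the fact that minimal points only arise when both factors are at their respective boundaries.
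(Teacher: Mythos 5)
Your proof is correct, and it follows exactly the route the paper itself sketches: the paper states this result with a bare citation to Giulini--Woess, but the surrounding exposition in Section~5.1 explicitly says ``the idea, as above, is that the minimal $\mu$-harmonic functions \eqref{minimal-solutions} split as a product of minimal $\mu_i$-harmonic functions on each hyperbolic space,'' which is precisely the chain you run through Theorem~\ref{thm:splitting} (Freire) plus the rank-one classification of minimal eigenfunctions as powers of the Poisson kernel (recalled in \eqref{Poisson-ball} and specialized from Theorem~\ref{thm-minimal-solutions}). The parameter bookkeeping $\gamma_1^2+\gamma_2^2=\mu_0-\mu$, $\gamma_i\ge 0$ matching $S_\mu$ via \eqref{notation1} is right, and the converse is indeed part~(ii) of Theorem~\ref{thm:splitting}. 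The only caveat worth recording is that this is not the proof in the cited reference: Giulini--Woess compute the \emph{full} Martin compactification \eqref{three-pieces}, including the two non-minimal components, by a direct asymptotic analysis of the ratio of Green functions, and the minimal piece drops out as a corollary; your argument, borrowed from Freire via Theorem~\ref{thm:splitting}, gives the minimal piece more cheaply and in greater generality (any product with Ricci bounded below) but says nothing about the non-minimal part of the boundary, which is what Giulini--Woess's heavier machinery buys.
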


As a consequence, we have  from \eqref{rep formula} that any weakly $\mu$-harmonic function on $\hh^{N_1}\times\hh^{N_2}$ can be represented as
\begin{equation}u(z_1,z_2)=\int_{\partial \mathbb{H}^{N_1}\times \partial \mathbb{H}^{N_2}\times S_\mu} P_{\ell}(z_1,z_2,b_1,b_2 )d\nu(b_1,b_2,\ell),\label{representation formula for u}\end{equation}
where $P_{\ell}$ is given by \eqref{product poisson}.
The measure $d\nu(b_1,b_2,\ell)$ depends on the function $u$ and we would like to relate it to the {\em boundary values } of $u$.

%\textcolor{red}{no lo usamos mas abajo?}
%{An observation made by \cite{Giulini-Woess} is that since $u$ is still a solution under the action of the group $K$ (rotation around the origin in the disk model), necessarily holds that
 % $$d\nu(b_1,b_2,\ell)=\nu_1((b_1,l),db_2) \nu_2(b_2,l).$$ }

Note that sometimes we write $P_\gamma$ for $P_\ell$, using the conventions in \eqref{notation1}.

Now we look at strongly $(\mu_1,\mu_2)$-harmonic functions in $\mathcal X$. In this case, they may be characterized as solutions to the system
\begin{equation*}\left\{\begin{split}-\Delta_{\hh^{N_1}}u-\mu_1u=&0,\\
-\Delta_{\hh^{N_2}}u-\mu_2u=&0.\end{split}\right.\end{equation*}
Our next results state that a weakly $\mu$-harmonic function with appropriate decay must be indeed strong:

\begin{lemma}
If the measure splits as
$\nu(b_1,b_2,\ell)=f(b_1,b_2)\delta_{(\mu_1,\mu_2)}$, for some fixed $(\mu_1,\mu_2)$, $\mu_1+\mu_2=\mu$, then any weak $\mu$-harmonic function
must be a strongly $(\mu_1,\mu_2)$-harmonic function.
\end{lemma}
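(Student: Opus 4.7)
The plan is to substitute the special form of $\nu$ into the representation formula \eqref{representation formula for u}, and then differentiate under the integral sign, using the fact that each Poisson factor in \eqref{product poisson} is a minimal harmonic function for the corresponding factor Laplacian. More precisely, under the hypothesis $d\nu(b_1,b_2,\ell) = f(b_1,b_2)\,\delta_{(\mu_1,\mu_2)}(\ell)$, the representation formula collapses to
\begin{equation*}
u(z_1,z_2) = \int_{\partial\mathbb H^{N_1}\times\partial\mathbb H^{N_2}} P^{\frac{n_1}{2}+\gamma_1}(z_1,b_1)\,P^{\frac{n_2}{2}+\gamma_2}(z_2,b_2)\,f(b_1,b_2)\,db_1\,db_2,
\end{equation*}
with $\gamma_i$ fixed by $\mu_i$ through \eqref{notation1}.

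Next I would recall from Subsection \ref{subsection:hyperbolic-space} that $P^{\frac{n_i}{2}+\gamma_i}(\cdot,b_i)$ is a minimal $\mu_i$-harmonic function on $\mathbb H^{N_i}$, so that
\begin{equation*}
-\Delta_{\mathbb H^{N_i}}P^{\frac{n_i}{2}+\gamma_i}(z_i,b_i) = \mu_i\,P^{\frac{n_i}{2}+\gamma_i}(z_i,b_i), \quad i=1,2.
\end{equation*}
Since $\Delta_{\mathbb H^{N_1}}$ acts only in the $z_1$ variable and the other Poisson factor is independent of $z_1$, interchanging derivative and integral yields $-\Delta_{\mathbb H^{N_1}}u = \mu_1 u$, and symmetrically $-\Delta_{\mathbb H^{N_2}}u = \mu_2 u$. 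These are precisely the two equations in the system characterizing strong $(\mu_1,\mu_2)$-harmonicity for the product $\mathcal X = \mathbb H^{N_1}\times\mathbb H^{N_2}$, since $\mathbf D(\mathcal X)$ is generated by $\Delta_1$ and $\Delta_2$, as noted in Section~\ref{Section:joint-eigenvalues}.

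The only nontrivial point is the justification of differentiation under the integral. I would argue this locally: for $(z_1,z_2)$ in any compact subset of $\mathcal X$, the kernels $P^{\frac{n_i}{2}+\gamma_i}(z_i,b_i)$ and their derivatives up to second order in $z_i$ are uniformly bounded in $b_i$, and the function $f$ is integrable (indeed, the representation \eqref{representation formula for u} is built from the finite positive measure $\nu$, so $f\in L^1(\partial\mathbb H^{N_1}\times\partial\mathbb H^{N_2})$). Therefore the dominated convergence theorem allows the two Laplacians to pass under the integral sign. Once this is in place the conclusion is immediate, and no further structure of $f$ is required beyond its integrability.
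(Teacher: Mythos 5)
Your proof is correct and takes essentially the same approach as the paper, whose own proof is the one-line remark ``Trivial since the Poisson kernel \eqref{product poisson} splits.'' You have simply spelled out what that remark leaves implicit: the product structure of the kernel, the fact that each factor satisfies the corresponding rank-one eigenvalue equation, and the dominated-convergence justification for passing the factor Laplacians under the integral sign.
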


\begin{proof}
Trivial since the Poisson kernel \eqref{product poisson}  splits.
\end{proof}

\begin{lemma}\label{lemma:weak=strong}
Let $u$ be a weak $\mu$-harmonic function in $\hh^{N_1}\times \hh^{N_2}$.
Consider the half space model with coordinates $(x_i, y_i)\in \hh^{N_i}$ (hence $\{y_i=0\}$ defines the boundary at infinity of $\hh^{N_i}$).
Assume that $\mu_1+\mu_2=\mu$ and that
$u(x_1,y_1,x_2,y_2) \leq C y_1^{\mu_1} y_2^{\mu_2}$,
where $C$ is a fixed constant. Then $u$ is strongly $(\mu_1,\mu_2)$-harmonic.
\end{lemma}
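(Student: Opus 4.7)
The plan is to invoke the representation formula \eqref{representation formula for u} and show that the decay hypothesis forces the measure $\nu$ to concentrate at the single direction $\tilde\ell=(\mu_1,\mu_2)\in S_\mu$; the preceding lemma then concludes that $u$ is strongly $(\mu_1,\mu_2)$-harmonic. Concretely, I would write
\begin{equation*}
u(z_1,z_2) = \iint_{\partial\hh^{N_1}\times\partial\hh^{N_2}\times S_\mu} P^{n_1/2+\tilde\gamma_1}(z_1,b_1)\, P^{n_2/2+\tilde\gamma_2}(z_2,b_2)\, d\nu(b_1,b_2,\tilde\ell),
\end{equation*}
with $\tilde\ell=(\tilde\mu_1,\tilde\mu_2)$ and $\tilde\gamma_i=\sqrt{(n_i/2)^2-\tilde\mu_i}$. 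From the scattering expansion recalled in Section 2, convolution of $P^{n_i/2+\tilde\gamma_i}(y_i,x_i-b_i)$ against a boundary datum has leading behavior $y_i^{n_i/2-\tilde\gamma_i}$ as $y_i\to 0^+$. Hence, tracking the iterated leading order of $u$ as $y_1\to 0$ and $y_2\to 0$, each direction $\tilde\ell$ contributes a term scaling like $y_1^{n_1/2-\tilde\gamma_1}y_2^{n_2/2-\tilde\gamma_2}$. Comparing with the bound $u\le C y_1^{\mu_1}y_2^{\mu_2}$ forces $n_i/2-\tilde\gamma_i\ge \mu_i$ on the support of $\nu$, and together with the constraint $\tilde\mu_1+\tilde\mu_2=\mu_1+\mu_2$ defining $S_\mu$ this pins $\tilde\ell$ to the single direction $(\mu_1,\mu_2)$.

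A cleaner, more PDE-flavored alternative is to set $v:=-\Delta_{\hh^{N_1}}u-\mu_1 u$; a short computation using $\mu=\mu_1+\mu_2$, the commutativity $[\Delta_{\hh^{N_1}},\Delta_{\hh^{N_2}}]=0$, and $-\Delta_{\mathcal X}u=\mu u$ gives
\begin{equation*}
-\Delta_{\mathcal X} v = \mu v,
\end{equation*}
so $v$ is again weakly $\mu$-harmonic. Interior elliptic estimates (translation-invariant in the Euclidean boundary variables $x_1,x_2$) transfer the growth of $u$ to $v$, and a Fourier analysis in $(x_1,x_2)$ reduces the problem to a coupled ODE system in $(y_1,y_2)$ parameterized by the dual variables $(\xi_1,\xi_2)$; one then argues that among the admissible Bessel-type profiles only the trivial one is compatible with the prescribed decay, hence $v\equiv 0$.

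The main obstacle in either route is rigorously extracting the sharp scattering exponent. In the representation approach, the near-diagonal concentration of $P^{n_i/2+\tilde\gamma_i}$ as $y_i\to 0^+$ is delta-like, so the expansion must be tested against smooth compactly supported functions in $(x_1,x_2)$ and the double asymptotic in $(y_1,y_2)$ justified by dominated convergence against $\nu$. In the Fourier approach, the analogous difficulty lies in isolating, among all admissible modified Bessel profiles in $y_1$ and $y_2$, precisely those compatible with both the eigenvalue equation and the prescribed decay rate.
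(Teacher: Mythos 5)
Your first route is the paper's own argument: invoke the representation formula \eqref{representation formula for u} for weakly $\mu$-harmonic functions, use the decay hypothesis to force the $S_\mu$-marginal of the measure $\nu$ to concentrate at the single direction $(\mu_1,\mu_2)$, and then apply the preceding lemma. The paper's proof is no more detailed than you are on the two points you flag (extracting the sharp boundary exponent from the Poisson kernel, and justifying the double limit $y_1,y_2\to 0$ against $\nu$), so this part of your proposal genuinely reproduces the paper's reasoning.

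Your second route is a different strategy but, as sketched, has a real gap. The verification that $v:=-\Delta_{\hh^{N_1}}u-\mu_1 u$ is again weakly $\mu$-harmonic is correct (commute $\Delta_{\hh^{N_1}}$ past $\Delta_{\mathcal X}$ and use $\mu=\mu_1+\mu_2$), and interior elliptic estimates do propagate the decay from $u$ to $v$. But those two facts alone only place $v$ back in the same class as $u$: a weakly $\mu$-harmonic function with the stated decay. Appealing to the lemma itself would give ``$v$ is strongly $(\mu_1,\mu_2)$-harmonic,'' not $v\equiv 0$, so the argument as stated is circular. To actually conclude $v\equiv 0$ you must exploit the additional structural fact that $v$ lies in the image of $-\Delta_{\hh^{N_1}}-\mu_1$, and that is precisely the spectral information the Martin representation encodes. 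The Fourier/ODE reduction you indicate would, if carried out, lead you back to separating along the modified Bessel profiles $y_i^{n_i/2}K_{\tilde\gamma_i}(|\xi_i|y_i)$ indexed by $\tilde\gamma_1^2+\tilde\gamma_2^2=\gamma_1^2+\gamma_2^2$, which is the Martin boundary argument of the first route in different clothing. So the second route is not a shortcut; as written it is a sketch that still needs the central step filled in.
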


\begin{proof}
%\textcolor{red}{Hay algo que no no entiendo de la gramatica de este parrafo}In particular,

Let $u$ be a $\mu$-harmonic function represented as \eqref{representation formula for u}.
If the growth of $u$ is bounded by the growth of $P_{\gamma}$ for a fixed $\gamma=(\gamma_1,\gamma_2)$ we have that the measure $d\nu(b_1,b_2,\ell)$ is absolutely continuous respect to the measure  $\delta_{\gamma_1}(\ell_1) \delta_{\gamma_2}(\ell_2) P_{\gamma}(\cdot, b_1,b_2) db_1db_2$, and then $u$ is a strongly $(\mu_1,\mu_2)$ harmonic  and it can be represented as

$$u(z_1,z_2)=\int_{\partial \mathbb{H}^{N_1}\times \partial \mathbb{H}^{N_2}} P_{\gamma}(z_1,z_2,b_1,b_2 )f(b_1,b_2)d b_1 db_2,$$
where $f(b_1,b_2)$ is the Radon-Nykodim derivative of
 $d\nu(b_1,b_2,\ell)$ with respect to the measure $\delta_{\gamma_1}(\ell_1) \delta_{\gamma_2}(\ell_2) db_1db_2$ and $P_{\gamma}$ is given by \eqref{product poisson}.
The function $f$ corresponds to compute the limiting behavior of $u$ up to the appropriate decay. More precisely, given the explicit decay of the kernel $P^{\frac{n_1}{2}+\gamma_1}(z_1,b_1) P^{\frac{n_2}{2}+\gamma_2}(z_2,b_2)$,
by computing $$\lim_{(z_1,z_2)\to \partial \mathbb{H}^{N_1}\times \partial \mathbb{H}^{N_2}}y_1^{-(\frac{n_1}{2}+\gamma_1)}y_2^{-(\frac{n_2}{2}+\gamma_2)}u(z_1,z_2)$$ we obtain $f$ at $\partial \mathbb{H}^{N_1}\times \partial \mathbb{H}^{N_2}$.
\end{proof}

For the half space model of $\mathbb{H}^{N_i}$, with coordinates $(x_i,y_i)$, $x_i\in\mathbb R^{n_i}$, $y_i\in\mathbb R_+$, $i=1,2$, the computations above translate as follows: let
$$\mathcal{K}_{(\gamma_1, \gamma_2)}^i(x_1,y_1,x_2,y_2)=c_{n_1,\gamma_1}c_{n_2,\gamma_2}\frac{y_1^{2\gamma_1} y_2^{2\gamma_2}}{(|x_1|^2+|y_1|^2)^{\frac{n_1+1}{2}}(|x_2|^2+|y_2|^2)^{\frac{n_2+1}{2}}},$$
where $c_{n_i,\gamma_i}$ are appropriate dimensional constants. Define

\begin{equation}U_{(\gamma_1, \gamma_2)}^f(x_1,y_1,x_2,y_2)=\int_{\mathbb{R}^{n_1} \times \mathbb{R}^{n_2}}\mathcal{K}_{(\gamma_1, \gamma_2)}(x_1-\xi_1,y_1, x_2-\xi_2, y_2)f(\xi_1,\xi_2)d\xi_1 d\xi_2.\label{U}\end{equation}
The function $U^f$ satisfies $U_{(\gamma_1, \gamma_2)}^f(x_1,y_1,x_2,y_2)\to f(x_1,x_2)$ as $y_1,y_2 \to 0$.
Moreover,
$$u(x_1,y_1,x_2,y_2)=y_1^{\frac{n_1}{2}+\gamma_1}y_2^{\frac{n_2}{2}+\gamma_2} U^f_{(\gamma_1, \gamma_2)}(x_1,y_1,x_2,y_2)$$
is the strong solution described by Lemma \ref{lemma:weak=strong}.

Notice that all  $\mu$-harmonic solutions (weak and strong) described by \eqref{rep formula}  converge to 0 as we approach the boundary (both minimal and non-minimal boundary). In particular, to obtain the
boundary value as a limit such as the one stated in  \ref{lemma:weak=strong} it is necessary (but not sufficient) an understanding of the decay towards the minimal boundary. To see that the condition is not sufficient,
 consider $(\mu_1, \mu_2)$ and $(\bar{\mu}_1, \bar{\mu}_2)$ such that $\mu_1+\mu_2=\bar{\mu}_1+ \bar{\mu}_2=\mu$. % and $\mu_1<\bar{\mu_1}$, $\mu_2<\bar{\mu_2}$.
 Let $f, \bar{f}$ be functions defined on  $\partial \mathbb{H}^{N_1}\times \partial \mathbb{H}^{N_2}$ and
$U_{(\gamma_1, \gamma_2)}^f, U_{(\bar{\gamma}_1,\,  \bar{\gamma}_2)}^{\bar{f}}$ given by \eqref{U}.
Then
\begin{equation*}
\tilde{u}(x_1,y_1,x_2, y_2)= y_1^{\frac{n_1}{2}+\gamma_1}y_2^{\frac{n_2}{2}+\gamma_2}U_{(\gamma_1, \gamma_2)}^f(x_1,y_1,x_2, y_2) +y_1^{2\bar{\gamma_1} }y_2^{2\bar{\gamma_2}}  U_{(\bar{\gamma_1},\,  \bar{\gamma_2})}^{\bar{f}} (x_1,y_1,x_2, y_2)\label{non-minimal-example}\end{equation*}
is a  weakly $\mu$-harmonic, but for any power $(\delta_1,\delta_2)$ the function $y_1^{-\delta_1} y_2^{-\delta_2} \tilde{u}(z_1,z_2)$ does not converge uniformly to a function as $y_1,\, y_2\to 0$. In fact, if we additionally fix $\theta\in[0,\infty]$ such that $\frac{y_1}{y_2}\to \theta$ as $y_1,y_2\to 0$ and assume  without loss of generality that $\gamma_1+\gamma_2>\bar{\gamma_1}+\bar{\gamma_2}$,
then we have as $(y_1,y_2)\to 0$ and $\frac{y_1}{y_2}\to \theta$
that

$$  y_1^{-\left(\frac{n_1}{2}+\bar{\gamma_1}\right)}y_2^{-\left(\frac{n_2}{2}+\bar{\gamma_2}\right)} \tilde{u}(x_1,y_1,x_2, y_2) \to \left\{\begin{array}{cc}f(x_1,x_ 2)&\hbox{ if }\theta\ne 0 \hbox{ or } (\theta=0\hbox{ and } \gamma_1>\bar{\gamma_1}),\\
\hbox{diverges} & \hbox{ otherwise.}\end{array}\right.
$$

Furthermore, there are solutions that do not have a power decay under any assumption. Consider for instance the solution
$$\int_{(\gamma_1, \gamma_2)\in\mathbb{S}_+}y_1^{\frac{n_1}{2}+\gamma_1}y_2^{\frac{n_2}{2}+\gamma_2}d\sigma(\gamma_1, \gamma_2),$$ where  $\mathbb{S}_+=\{(\gamma_1, \gamma_2)\,: \, \gamma_1^2+\gamma_2^2= \frac{n_1^2+n_2^2}{4}-\mu, \hbox{ and  }  \gamma_1, \gamma_2\geq 0 \}$ and $\sigma$ the standard measure on this set.

Hence, in general, we can only consider the boundary values in the sense of Martin as the projections on the corresponding kernel  representing a minimal boundary point.

On the other hand, note that a strongly $\mu$-harmonic function $u$ as above satisfies at the non-minimal boundary that, as $y_1\to 0$ (resp. $y_2\to 0$),
\begin{align*}y_1^{-\left(\frac{n_1}{2}+\gamma_1\right)}u(x_1,y_1,x_2, y_2) &\to y_2^{\frac{n_2}{2}+\gamma_2} U^f_{(\gamma_1, \gamma_2)}(x_1,0,x_2,y_2),\\
\hbox{(resp. }, y_2^{-\left(\frac{n_2}{2}+\gamma_2\right)}u(x_1,y_1,x_2, y_2) &\to y_1^{\frac{n_1}{2}+\gamma_1} U^f_{(\gamma_1, \gamma_2)}(x_1,y_1,x_2,0)).\end{align*}
In particular we have that at the non-minimal boundary the extension is $\mu_j$-harmonic in $\mathbb{H}^{N_j}$.

A similar computation cannot be generally performed for weakly $\mu$-harmonic solutions.

\section{The asymptotically product case}\label{section:asymptotically-product}

Inspired by the product of two hyperbolic spaces, first we cover the product of any two  conformally compact Einstein manifolds, and then we consider a more general perturbation.  We would like to first study strongly harmonic functions and to construct the scattering map.

\subsection{Review}

Let us start by  carefully reviewing the construction of the scattering operator on the boundary of a conformally compact Einstein manifold (see \cite{Graham-Zworski,Mazzeo-Melrose,Joshi-SaBarreto}, for instance), which is the natural generalization of a (real) symmetric space of rank one to the curved case. We use the notation $N=n+1$.

Let $(\mathcal X^{N},g^+)$ be a conformally compact Einstein manifold with conformal infinity $(\mathcal B^n,[h])$ and defining function $y$. A more general setting would be to take $(\mathcal X^{N},g^+)$ only asymptotically hyperbolic but let us restrict to the conformally compact Einstein setting for simplicity. Let us assume that $g^+$ is written in normal form, i.e.,
$$g^+=\frac{dy^2+h_y}{y^2},$$
where $h_y$ is a one-parameter family of metrics on $\mathcal B$ satisfying $h_y|_{y=0}=h.$

Take $s\in\mathbb C$, $Re(s)>n/2$,  we are actually interested in the case $s=\frac{n}{2}+\gamma$, $\gamma\in \left(0,\frac{n}{2}\right)$ not an integer.

\begin{lemma}[\cite{Graham-Zworski}]\label{lemma-dim1}
Given a smooth function $f$ on $\mathcal B$, and $s\not\in\frac{n}{2}+\mathbb N$ not in the pure spectrum of $-\Delta_{g^+}$, then there exists a unique solution of
$$-\Delta_{g^+} u-s(n-s)u=0\quad\text{in }\mathcal X$$
with the asymptotic expansion near $\mathcal B$ given by
\begin{equation}\label{expansion-GZ}u=y^{n-s}F+y^{s}G, \quad F,G\in\mathcal C^{\infty} (\overline{\mathcal X}), \quad F|_{y=0}=f.\end{equation}
Moreover, $G=\mathcal R f$, where $\mathcal R$ is given explicitly in terms of derivatives of $f$ and the resolvent for $\Delta_{g^+}$ on $(\mathcal X^{N},g^+)$.

Now, the scattering operator on $\mathcal B$, defined as $$\mathcal S(s)f = G|_\mathcal B,$$
gives a meromorphic family of pseudo-differential operators in $\mathcal B$, with principal symbol  the same as $(-\Delta_h)^\gamma$ (times a multiplicative constant).

The operator  $\mathcal S(s)$ conformally covariant. Indeed, for a change of metric
 $\tilde h=e^{2w} h$, $w>0$, on $\mathbb R^n$, one has
$$\mathcal S_{\tilde h}(s)(\cdot)=e^{-\frac{n+2\gamma}{2}w} \mathcal S_{h}(s)(e^{\frac{n-2\gamma}{2} w}\,\cdot\,).$$
The values $s = n/2, n/2 +1 , n/2 + 2, \ldots$ are simple poles of finite rank, these are known as the trivial poles.
\end{lemma}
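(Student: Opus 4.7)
The plan is the classical two-step scheme: build a formal series solution at the leading order $y^{n-s}$ and then correct the infinite-order remainder with the $L^2$ resolvent. In normal form $g^+=y^{-2}(dy^2+h_y)$, a direct computation gives
$$-\Delta_{g^+}(y^\mu v)=\mu(n-\mu)\,y^\mu v + O(y^{\mu+1}),\quad v\in\mathcal C^\infty(\overline{\mathcal X}),$$
so the indicial roots of $-\Delta_{g^+}-s(n-s)$ are $\mu=n-s$ and $\mu=s$; because $\re(s)>n/2$, the root $n-s$ is the ``Dirichlet'' one that carries the data $f$. Substituting the ansatz $y^{n-s}\sum_{k\geq 0}f_{2k}y^{2k}$ with $f_0=f$ into the equation and equating powers of $y$, I would extract a recursion of the form
$$2k(2s-n-2k)\,f_{2k}=P_{2k}(f_0,\dots,f_{2k-2}),$$
where $P_{2k}$ is a linear differential operator on $\mathcal B$ built from $\Delta_h$ and the Taylor coefficients of $h_y$ at $y=0$. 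This is uniquely solvable as long as $s\notin\tfrac{n}{2}+\mathbb N$, and Borel summation then produces a function $\tilde F\in\mathcal C^\infty(\overline{\mathcal X})$ with $\tilde F|_{y=0}=f$ such that the error
$$e:=(-\Delta_{g^+}-s(n-s))(y^{n-s}\tilde F)$$
vanishes to infinite order at $\mathcal B$.

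Second, because $s(n-s)$ is assumed to be off the $L^2$-spectrum, the resolvent $R(s):=(-\Delta_{g^+}-s(n-s))^{-1}$ exists on $L^2(\mathcal X,g^+)$, and I would set
$$u:=y^{n-s}\tilde F-R(s)e,$$
which by construction solves the eigenvalue equation. The key input for extracting the expansion \eqref{expansion-GZ} is the Mazzeo--Melrose description of $R(s)$ in the 0-calculus: applied to a function vanishing to infinite order at $\mathcal B$, $R(s)$ produces a polyhomogeneous function whose leading term at $\mathcal B$ is of the form $y^s G$ with $G$ smooth up to the boundary. Reading off $G|_\mathcal B$ and tracing through the construction identifies it as $f$ acted upon by a combination of local differential operators and the boundary trace of $R(s)$, which is the operator $\mathcal R$ of the statement. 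Uniqueness follows immediately: the difference of two such solutions has its expansion starting at $y^s$, hence (since $\re(s)>n/2$) belongs to $L^2$ and lies in the kernel of $-\Delta_{g^+}-s(n-s)$, which is trivial by the spectral hypothesis.

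For the remaining structural properties, the membership of $R(s)$ in the 0-calculus forces the boundary map $\mathcal S(s):f\mapsto G|_\mathcal B$ to be pseudo-differential on $\mathcal B$; its principal symbol is computed by freezing coefficients at $p\in\mathcal B$ and comparing with the hyperbolic model of Subsection \ref{subsection:hyperbolic-space}, which gives a constant multiple of $(-\Delta_h)^\gamma$. Meromorphicity in $s$ is inherited from that of $R(s)$, and the trivial poles at $s=\tfrac{n}{2}+k$ are exactly the values at which the prefactor $2k(2s-n-2k)$ in the recursion above vanishes, forcing a logarithmic correction and producing a pole of finite rank whose residue is a differential operator of order $2k$ on $\mathcal B$ (proportional to $\Delta_h^k$ to leading order). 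Conformal covariance is then a direct bookkeeping exercise: a change of defining function $\tilde y=e^{w}y+O(y^2)$ rescales $h\mapsto e^{2w}h$ and transforms the leading coefficients in \eqref{expansion-GZ} as $f\mapsto e^{(s-n)w}f$ and $G|_\mathcal B\mapsto e^{-sw}G|_\mathcal B$, which upon rearrangement yields the stated intertwining rule. The main technical obstacle is precisely the polyhomogeneity and mapping properties of $R(s)$: without the 0-calculus machinery, controlling how $R(s)$ processes boundary asymptotics in the manner described is far from routine, and this is where the bulk of the analytic work of \cite{Mazzeo-Melrose,Graham-Zworski} sits.
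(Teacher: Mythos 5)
Your proposal is correct and follows essentially the same route as the paper's summary of Graham--Zworski: build the formal series at the indicial root $n-s$ by the recursion on the $f_i$, Borel-sum it to an approximate solution with error vanishing to infinite order at $\mathcal B$, and then correct via the Mazzeo--Melrose resolvent, whose mapping properties in the $0$-calculus yield the $y^s G$ leading term. The only cosmetic differences are your even-power ansatz $\sum f_{2k}y^{2k}$ (natural for the Poincar\'e--Einstein setting and making the location of the trivial poles transparent) versus the paper's $\sum_i f_i y^i$, and your spelling out the uniqueness and conformal-covariance bookkeeping that the paper leaves to \cite{Graham-Zworski}.
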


Note that $\mathcal S(s)$ may have other poles, however, for the rest of the paper we assume that we are not in those exceptional cases. Sometimes we will write $\mathcal S^\mu$ for $\mathcal S(s)$, in the notation \eqref{gamma-nu}.

\begin{proof}
The proof in \cite{Graham-Zworski} goes by  constructing $F$ as formal solution and then completing it to an exact solution using the resolvent for the operator $Lu=-\Delta_{g^+}u -s(n-s)u$. First, the construction of $F$ is inductive: let $F=\sum_{i=0}^\infty f_{i} y^{i}$, where $f_0:=f.$
Let also
$$D = -y\partial_{y y}+(2s-n-1)\partial_{y}+y\Delta_{h_y},$$
so that
$[-\Delta_{g^+} -s(n-s)]\circ y^{n-s} =y^{n-s+1} D.$
Note that
$$D(f_{i} y^i)= i (2s_1-n_1-i)f_{i} y^{i-1}+O(y^i),$$
We claim that it is possible to choose $f_i$ such that
\begin{equation*}D (F_{i})=O(y^i),\quad i=1,2,\ldots.\end{equation*}
To construct $F_{i}$ from  the previous step $F_{i-1}$ just note that
\begin{equation}\label{cuentas1}\begin{split}
D(F_{i})&=D(F_{i-1})+D(f_{i} y^{i})\\
&=O(y^{i-1})+i(2s-n-i)f_{i} y^{i-1}+O(y^i).
\end{split}\end{equation}
Thus if $2s-n-i\neq 0$, this fixes $f_{i}$. In particular, we have shown that
\begin{equation}\label{operator-p}f_i=p_{i,s} f,\end{equation}
where $p_{i,s}$ is a differential operator on $\mathcal B$.

By using Borel's lemma, we may ensure that there is a function  $F(x,y) $ that satisfies
\begin{equation}\label{equation-F}-\Delta_{g^+} (y^{n-s}F)-s(n-s)y^{n-s}F=O(y^\infty).\end{equation}
That is, the operator evaluated at $F$ vanishes at infinite order at the boundary. Note that $F$ may be explicitly computed in terms of $p_{i,s} f$.

Now we find $G$ by observing that if  we set  $ u=y^{n-s}F+E$, we have that $E$ must satisfy
$$-\Delta_{g^+} E-s(n-s)E=-(-\Delta_{g^+} (y^{n-s}F)-s(n-s)y^{n-s}F)\in L^2(\mathcal X).$$
According to \cite{Mazzeo-Melrose} $E$ can be written in term of the resolvent $\mathcal{R}$ of $D$  and has decay $y^s$. That is
$$E=\mathcal{R} F=y^sG(x,y).$$

Note that if $G$ is real analytic then we have that
$$G=\sum_{i=1}^{\infty} g_i y^i,$$
and from the proof in \cite{Graham-Zworski} one knows that
\begin{equation}\label{operator-q}g_0=\mathcal S(s) f \quad\text{and} \quad g_i=q_{i,s} g_0,\quad i=1,2,\ldots,\end{equation}
where $q_{i,s}$ is a differential operator on $\mathcal B$.
\end{proof}

\begin{remark}
In order to perform the construction above
the formal solution $F$ does not need to be constructed to vanish  to infinite order \eqref{equation-F},  it is enough for it to be in $L^2$ (i.e to vanish to a high enough order).
%\textcolor{red}{ maybe we should write a more precise reference for this}
\end{remark}

\begin{remark}\label{polosGZ}
In the case that $s=\tfrac{n}{2}+k$ for some $k\in\mathbb N$, there still exists a unique solution $u$ but the expansion \eqref{expansion-GZ} needs to be replaced by
$$u=y^{n-s}F+y^{s}\log y \,G.$$
The scattering operator can be still be defined and a residue argument relates it to the local conformal powers of the Laplacian $(-\Delta_{h})^k$. The idea of the proof is that the coefficient in front of $f_i$ in \eqref{cuentas1} vanish for some $i$, so there is an obstruction to solve for smooth solutions and a log term needs to be introduced (\cite{Graham-Zworski}).
\end{remark}

\subsection{The product of two conformally compact Einstein manifolds}

Let $(\mathcal X_1^{N_1},g_1^+)$ and $(\mathcal X_2^{N_2},g_2^+)$ be two conformally compact Einstein manifolds  with conformal infinities $(\mathcal B^{n_1}_1,[h_1])$, $(\mathcal B^{n_2}_2,[h_2])$, respectively.  We consider its product $\mathcal X=\mathcal X_1\times \mathcal X_2$ with the product metric $g^+=g_1^+ + g_2^+$.  Then there exist defining functions $y_i$ on $\mathcal X_i$ so that
\begin{equation}\label{metric-product}
g^+_i=\frac{dy_i^2+h_{y_i}}{y_i^2},\quad h_{y_i}|_{y_i=0}=h_i, \quad i=1,2.
\end{equation}

From \cite{Mazzeo-Vasy:products} we have that for $\mathcal X_1$ and $\mathcal X_2$  the distinguished boundary can be identified with $\mathcal B_1\times \mathcal B_2$, while the Martin boundary consists of three pieces
$$(\partial \mathcal X_1\times \mathcal X_2)\cup (\partial \mathcal X_1\times\partial\mathcal X_2\times S_\mu)\cup
(\mathcal X_1\times \partial\mathcal X_2),$$
as in the model case \eqref{three-pieces}.

It is well known that the continuous spectrum of $-\Delta_{g_i}$ is the interval $\left[\frac{n^2}{4},\infty\right)$, but there could be a discrete number of points in $[0,\frac{n^2}{4})$. Let us assume that we are not in these exceptional cases for now.

The main result of this section is the following theorem:

\begin{thm}
 \label{thm:expansion-of-sol}
If $u$ is strongly $(\mu_1,\mu_2)$-harmonic on $\mathcal X$, then $u$ has an asymptotic behavior near the distinguished boundary $\mathcal B_1\times \mathcal B_2$ given by
\begin{equation}
\label{expansion-product}
u=F\, y_1^{\frac{n_1}{2}-\gamma_1}y_2^{\frac{n_2}{2}-\gamma_2}+
G\,y_1^{\frac{n_1}{2}-\gamma_1}y_2^{\frac{n_2}{2}+\gamma_2}
+H\, y_1^{\frac{n_1}{2}+\gamma_1}y_2^{\frac{n_2}{2}-\gamma_2}+
I\, y_1^{\frac{n_1}{2}+\gamma_1}y_2^{\frac{n_2}{2}+\gamma_2},
\end{equation}
where $F$, $G$, $H$, $I$ are $\mathcal C^\infty(\mathcal X_1\times \mathcal X_2)$ up to the boundary  if $f$ is $\mathcal C^\infty$.
\end{thm}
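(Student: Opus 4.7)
The plan is to iterate the one-factor Graham–Zworski construction (Lemma \ref{lemma-dim1}) in each factor, exploiting the fact that strongly $(\mu_1,\mu_2)$-harmonic means $u$ simultaneously solves
\[
-\Delta_{g_1^+}u-\mu_1 u=0 \quad\text{and}\quad -\Delta_{g_2^+}u-\mu_2 u=0,
\]
where the two operators commute since they act on disjoint sets of variables.

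\textbf{Step 1 (Expand in $y_1$).} Fix an arbitrary point $z_2=(x_2,y_2)\in\mathcal X_2$ and regard $u(\cdot\,;z_2)$ as a function on $\mathcal X_1$ satisfying $-\Delta_{g_1^+}u-\mu_1 u=0$. Applying Lemma \ref{lemma-dim1} with $s_1=\tfrac{n_1}{2}+\gamma_1$ (and assuming the non-resonance condition $2\gamma_1\notin\mathbb N$ so no log terms appear, cf.\ Remark \ref{polosGZ}) yields a unique expansion
\[
u=y_1^{\frac{n_1}{2}-\gamma_1}\,\Phi_1(x_1,y_1;z_2)+y_1^{\frac{n_1}{2}+\gamma_1}\,\Phi_2(x_1,y_1;z_2),
\]
with $\Phi_1,\Phi_2\in\mathcal C^\infty(\overline{\mathcal X_1})$ and with smooth $z_2$-dependence coming from the fact that the inductive formulas \eqref{operator-p}--\eqref{operator-q} and the resolvent on $(\mathcal X_1,g_1^+)$ depend only on $x_1,y_1$ and commute with differentiation in $z_2$.

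\textbf{Step 2 (Use the second equation).} Apply $-\Delta_{g_2^+}-\mu_2$ to this expansion. Since $\Delta_{g_2^+}$ does not differentiate $y_1$,
\[
0=y_1^{\frac{n_1}{2}-\gamma_1}\bigl(-\Delta_{g_2^+}-\mu_2\bigr)\Phi_1+y_1^{\frac{n_1}{2}+\gamma_1}\bigl(-\Delta_{g_2^+}-\mu_2\bigr)\Phi_2.
\]
Uniqueness of the expansion in Step 1 (equivalently, linear independence of $y_1^{\frac{n_1}{2}\pm\gamma_1}$ in the Graham–Zworski class, which holds precisely under the non-resonance condition on $\gamma_1$) forces each coefficient to vanish:
\[
(-\Delta_{g_2^+}-\mu_2)\Phi_j=0 \quad\text{on } \mathcal X_2,\qquad j=1,2,
\]
for each fixed $(x_1,y_1)$.

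\textbf{Step 3 (Expand in $y_2$).} Now apply Lemma \ref{lemma-dim1} in $(\mathcal X_2,g_2^+)$ with $s_2=\tfrac{n_2}{2}+\gamma_2$ to each $\Phi_j$. This gives
\[
\Phi_1=y_2^{\frac{n_2}{2}-\gamma_2}F+y_2^{\frac{n_2}{2}+\gamma_2}G,\qquad
\Phi_2=y_2^{\frac{n_2}{2}-\gamma_2}H+y_2^{\frac{n_2}{2}+\gamma_2}I,
\]
with $F,G,H,I$ smooth up to the boundary. Substituting into the expression of Step 1 produces the four-term asymptotic \eqref{expansion-product}.

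\textbf{Main obstacle.} The delicate points are: (i) justifying the separation of powers in Step 2 rigorously, which needs the non-resonance $2\gamma_1,2\gamma_2\notin\mathbb N$ to avoid logarithmic corrections as in Remark \ref{polosGZ}; and (ii) showing joint smoothness of $\Phi_1,\Phi_2$ in the full set of variables, not just in $(x_1,y_1)$ with $z_2$ as a parameter. The latter follows because the formal coefficients in the Graham–Zworski inductive scheme \eqref{cuentas1} are universal differential operators $p_{i,s_1},q_{i,s_1}$ on $\mathcal B_1$ whose coefficients depend smoothly on $h_{y_1}$ (hence only on $\mathcal X_1$-data), and the correction via the resolvent on $(\mathcal X_1,g_1^+)$ is an operator in the $\mathcal X_1$-variables alone that commutes with $\partial_{x_2}$ and $\partial_{y_2}$, so smoothness in $z_2$ is inherited from that of $u$. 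Resonant values of $(\gamma_1,\gamma_2)$ and possible embedded eigenvalues of $-\Delta_{g_i^+}$ would require the log modifications of Remark \ref{polosGZ} and are explicitly excluded by hypothesis.
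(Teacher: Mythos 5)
Your proposal takes a genuinely different route from the paper. The paper proves Theorem~\ref{thm:expansion-of-sol} constructively: starting from boundary data $f$, Lemma~\ref{lemma-four-terms2} builds a solution with the four-term expansion by applying the one-factor resolvents $\mathcal R_1$ and $\mathcal R_2$ in sequence and using their commutativity; the $C^\infty$ regularity of $F,G,H,I$ is then inherited from the mapping properties of these resolvents. Your argument runs in the opposite, ``deconstructive'' direction: take an arbitrary strongly $(\mu_1,\mu_2)$-harmonic $u$, peel off the two indicial powers in $y_1$, use the second equation together with the linear independence of $y_1^{n_1/2\pm\gamma_1}$ to conclude that each coefficient is again $\mu_2$-harmonic, and then peel off the $y_2$-powers. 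This is cleaner in spirit and, if completed, would actually yield a sharper statement (uniqueness of the expansion for a given $u$), which the paper's constructive proof does not directly address.

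However, Step~1 has a real gap. Lemma~\ref{lemma-dim1} (Graham--Zworski) says: \emph{given} a smooth $f$ on $\mathcal B_1$, there exists a unique solution with the two-term expansion $y_1^{n_1-s_1}F + y_1^{s_1}G$. It does \emph{not} say that every solution of $(-\Delta_{g_1^+}-\mu_1)v=0$ admits such an expansion. To go from ``$u(\cdot;z_2)$ is a solution'' to ``$u(\cdot;z_2)$ is polyhomogeneous with only the exponents $\tfrac{n_1}{2}\pm\gamma_1$'' you need a conormal regularity theorem from the $0$-calculus (Mazzeo--Melrose), which in turn requires an a priori growth or function-space hypothesis on $u$ (for instance, that $u(\cdot;z_2)$ lies in $y_1^{n_1/2-\gamma_1-\epsilon}L^\infty(\mathcal X_1)$), together with the assumption that $s_1(n_1-s_1)$ is not in the $L^2$-spectrum. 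As written, your Step~1 silently assumes this converse. The simplest fix, consistent with the paper's intended reading (``\ldots if $f$ is $\mathcal C^\infty$''), is to restrict $u$ to be the Poisson integral $\mathcal P_\lambda f$; for fixed $z_2$ this \emph{is} the unique Graham--Zworski solution with boundary data $\int_{\mathcal B_2}P^{\frac{n_2}{2}+\gamma_2}(z_2,b_2)f(\cdot,b_2)\,db_2$, and then your Steps~2--3 go through. Alternatively, state the missing a priori regularity hypothesis and cite the relevant edge-calculus result. Your discussion in the ``Main obstacle'' paragraph of the joint smoothness in $z_2$ (via commutation of the $\mathcal X_1$-resolvent with $\partial_{x_2},\partial_{y_2}$) is correct and is essentially the same point the paper exploits via $\mathcal R_1\mathcal R_2=\mathcal R_2\mathcal R_1$. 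One small point in your favour: the non-resonance condition you impose, $2\gamma_i\notin\mathbb N$, is the correct one for unique separation of powers; the paper's statement ``$s\notin\frac n2+\mathbb N$'' is slightly weaker than what is actually needed.
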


Note that the proof of this result in the particular case that $\mathcal X$ is exactly the product of two hyperbolic spaces  as in Section \ref{subsection:product-hyperbolic}  follows directly from \cite{KKMOOT} applied to a rank two symmetric space. But this proof would not work as it is for the general curved case, so we provide a new direct proof that does not use any tools from representation theory.

In any case, for the product of two hyperbolic spaces the Weyl group consists of four elements $\{e,\omega_1,\omega_2,\omega_1\omega_2\}         $, which are the possible reflections across two orthogonal axes, so it is very natural to expect the asymptotic expansion \eqref{expansion-product}.

Keeping the same notation as in the previous section, for simplicity we assume in the first two subsections that we are not in the exceptional value case. We start by constructing formally a
 strongly $(\mu_1,\mu_2)$-harmonic on $\mathcal X$ in Subsection \ref{formalconstruction} and identifying the scattering map in this case. Subsection \ref{rigorousconstruction} gives a rigorous proof of Theorem \ref{thm:expansion-of-sol}.

\subsubsection{Formal proof of Theorem \ref{thm:expansion-of-sol}}\label{formalconstruction}

First we give a formal (but constructive) solution to the system \eqref{system1}-\eqref{system2} for the product $\mathcal X^{N_1}\times \mathcal X^{N_2}$.

\begin{lemma}\label{lemma:formal-solution} Assume that $s_1>\frac{n_1}{2}$, $s_1\not\in \frac{n_1}{2}+\mathbb N$ and $s_2>\frac{n_2}{2}$, $s_2\not\in \frac{n_2}{2}+\mathbb N$.
Given $f\in\mathcal C^\infty(\mathcal B_1\times \mathcal B_2)$, there exists a formal solution $u$ in $\mathcal X_1 \times \mathcal X_2$ for the system
\begin{eqnarray}
\label{system1}&D_1 u:=-\Delta_1 u-s_1(n_1-s_1)u=0,\\
\label{system2}&D_2(u):=-\Delta_2 u-s_2(n_2-s_2)u=0,
\end{eqnarray}
with the asymptotic behavior $u=y_1^{n_1-s_1} y_2^{n_2-s_2}F$, $F\in\mathcal C^\infty$ up to the boundary, $F=f$ on $\mathcal B_1\times\mathcal B_2$.
\end{lemma}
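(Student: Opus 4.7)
The plan is to reduce the system \eqref{system1}--\eqref{system2} to a pair of indicial equations in the two normal variables and then to build $F$ by iterating the Graham--Zworski construction of Lemma~\ref{lemma-dim1}, using that $\Delta_1$ and $\Delta_2$ act on disjoint sets of coordinates and hence commute.

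First I would substitute $u = y_1^{n_1 - s_1} y_2^{n_2 - s_2} F$ into the system. Using the single-factor identity from the proof of Lemma~\ref{lemma-dim1},
\[
[-\Delta_i - s_i(n_i - s_i)]\circ y_i^{n_i - s_i} = y_i^{n_i - s_i + 1}\, D_i, \qquad D_i = -y_i \partial_{y_i y_i} + (2s_i - n_i - 1)\partial_{y_i} + y_i \Delta_{h_{y_i}},
\]
combined with the product structure $\Delta_{g^+} = \Delta_1 + \Delta_2$ and the fact that $\Delta_2$ commutes with multiplication by functions of $(x_1, y_1)$, the system becomes the decoupled pair $D_1 F = 0$ and $D_2 F = 0$ with Dirichlet condition $F|_{\mathcal B_1 \times \mathcal B_2} = f$. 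Note that each $D_i$ involves only differentiations in $x_i, y_i$, so as operators on $\mathcal X_1 \times \mathcal X_2$ one has $[D_1, D_2] = 0$.

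Next I would apply Lemma~\ref{lemma-dim1} in the $y_1$-direction with $x_2$ treated as a parameter, producing a formal series $F^{(1)}(x_1, y_1, x_2) = \sum_{i \geq 0} p_{i, s_1}(f)\, y_1^i$ with $p_{i, s_1}$ a differential operator on $\mathcal B_1$; the recursion for the $f_i$ has indicial factor $i(2s_1 - n_1 - i)$, which under the hypothesis $s_1 \notin \tfrac{n_1}{2} + \mathbb N$ never vanishes, and yields $D_1 F^{(1)} = O(y_1^\infty)$. A second application of the same lemma in $y_2$ with boundary data $F^{(1)}$ and with $(x_1, y_1)$ as parameters produces $F(x_1, y_1, x_2, y_2) = \sum_{j \geq 0} q_{j, s_2}(F^{(1)})\, y_2^j$ satisfying $F|_{y_2 = 0} = F^{(1)}$ and $D_2 F = O(y_2^\infty)$. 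A standard two-variable Borel summation at the corner $\{y_1 = y_2 = 0\}$ promotes this double Taylor series into an actual $\mathcal C^\infty$ function on $\overline{\mathcal X_1} \times \overline{\mathcal X_2}$ with the prescribed mixed expansion.

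The main obstacle, which I expect to be the delicate point, is to check that the $y_2$-step has not spoiled the first equation, i.e.\ that one still has $D_1 F = O(y_1^\infty) + O(y_2^\infty)$ and not merely $D_1 F|_{y_2 = 0} = O(y_1^\infty)$. This is where the commutation $[D_1, D_2] = 0$ is essential. Setting $G := D_1 F$, one has $D_2 G = D_1(D_2 F) = O(y_2^\infty)$, while $G|_{y_2 = 0} = D_1 F^{(1)} = O(y_1^\infty)$. Since $s_2 \notin \tfrac{n_2}{2} + \mathbb N$, the indicial coefficients $j(2s_2 - n_2 - j)$ for $D_2$ are nonzero, so any formal power series in $y_2$ annihilated by $D_2$ to infinite order is uniquely determined by its trace at $y_2 = 0$; applied to $G$ this forces $D_1 F = O(y_1^\infty) + O(y_2^\infty)$. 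Consequently $u = y_1^{n_1 - s_1} y_2^{n_2 - s_2} F$ is a formal solution of \eqref{system1}--\eqref{system2} with the required asymptotic behavior.
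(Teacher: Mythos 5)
Your proof is correct and follows essentially the same strategy as the paper: both iterate the Graham--Zworski formal expansion of Lemma~\ref{lemma-dim1} in the two normal variables and exploit the commutativity of the reduced indicial operators (which holds because $\Delta_1$ and $\Delta_2$ act on disjoint sets of coordinates) to ensure the double series $F=\sum_{i,j}p^{(1)}_{i,s_1}p^{(2)}_{j,s_2}f\,y_1^iy_2^j$ solves both equations. The consistency check you carry out via $G=D_1F$ is precisely what the paper condenses into the observation that each $y_1$-slice $F_{*j}$ and each $y_2$-slice $F_{i*}$ is a one-variable formal solution, so the full series solves the system by linearity.
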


\begin{proof}
The proof is a consequence of \cite{Graham-Zworski}, summarized in Lemma \ref{lemma-dim1} above, and we follow their notation.
We will construct $F$ as
$$F=\sum_{i=0}^\infty \sum_{j=0}^\infty f_{ij} y_1^{i} y_2^j, \quad\mbox{where}\quad f_{00}:=f.$$
Looking only at the first variable $x_1$, we set
$$f_{ij}:=p^{(1)}_{i,s_n1} f_{0j}, \quad j=0,1,\ldots,$$
where $p^{(1)}_{i,s_1}$ is the corresponding differential operator from \eqref{operator-p} in the variable $x_1$. Next, looking only at the second variable $x_2$, we set
$$f_{0j}:=p^{(2)}_{j,s_2} f.$$
Note that the operators $p^{(1)}_{i,s_1}$ and $p^{(2)}_{j,s_2}$ commute, so we have the identity
$$p^{(1)}_{i,s_1}p^{(2)}_{j,s_2} f=f_{ij}=p^{(2)}_{j,s_2}p^{(1)}_{i,s_1}f.$$
In particular, the first equality shows that $y_1^{n_1-s_1}F_{*j}$ for $F_{*j}:=\sum_{i} f_{ij} y_1^{i}$  is a formal solution to \eqref{system1} for every $j=0,1,\ldots$, while the second equality tells us that  $y_2^{n_2-s_2}F_{i*}$ for $F_{i*}:=\sum_{j} f_{ij} y_2^{j}$ is a formal solution to \eqref{system2}. Finally, by linearity, the function $u=y_1^{n_1-s_1} y_2^{n_2-s_2}F$ is a solution to both \eqref{system1} and \eqref{system2}.
This completes the proof of the Lemma.
%First, set $f_{00}=f$, and $F_{ij}=\sum_{\bar i=0}^i \sum_{\bar j=0}^j f_{\bar i\bar j} y_1^{\bar i}y_2^{\bar j}$.
%Let also
%$$D_\alpha = -y_\alpha\partial_{y_\alpha y_\alpha}+(2s_\alpha-n_\alpha-1)\partial_{y_\alpha}+y_\alpha\Delta_{h_\alpha}, \quad \alpha=1,2,$$
%so that
%$[-\Delta_\alpha -s_\alpha(n_\alpha-s_\alpha)]\circ y_\alpha^{n_\alpha-s_\alpha} =y_\alpha^{n_\alpha-s_\alpha+1} D_\alpha$.
%Note that
%$$D_1(f_{ij} y_1^i)= i (2s_1-n_1-i)f_{ij} y_1^{i-1}+O(y_1^i),$$
%and that the analogous formula is true for $D_2$. We claim that it is possible to choose so that
%\begin{equation}\label{Dalpha}D_\alpha (F_{ij})=O(y_1^i y_2^j),\quad \alpha=1,2.\end{equation}
%To construct $F_{ij}$ from  the previous step $F_{i-1,j}$ just note that
%\begin{equation*}\begin{split}
%D_1(F_{ij})&=D_1(F_{i-1,j})+D_1(f_{ij} y_1^{i}y_2^j)\\
%&=O(y_1^{i-1} y_2^j)+i(2s_1-n_1-i)f_{ij} y_1^{i-1} y_2^{j}+O(y_1^i y_2^j).
%\end{split}\end{equation*}
%Thus if $2s_1-n_1-i\neq 0$, this fixes $f_{ij}$. In the case that this coefficient is zero, then one can use instead $F_{i,j-1}$ to arrive to $F_{ij}$, thanks to the initial hypothesis on $s_1,s_2$. Finally, it is straightforward to check \eqref{Dalpha}, which concludes the proof of the lemma.
\end{proof}

%\begin{remark} \label{approximate solution}
Notice that the proof above does not guarantee that the power series above converges. However,
as in \cite{Graham-Zworski}, using Borel's lemma is it possible to obtain $F$ such that in $\mathcal X_1\times \mathcal X_2$, $u:=y_1^{n_1-s_1}y_2^{n_2-s_2}F$ satisfies
\begin{eqnarray*}
\label{system1aprox}-\Delta_1 u-s_1(n_1-s_1)u=O(y_1^{\infty}),\\
\label{system2aprox}-\Delta_2 u-s_2(n_2-s_2)u=O(y_2^\infty).
\end{eqnarray*}
In the next subsection it will also suffice to conclude that the sum is taken up to a higher enough power such that $\phi_1:= -\Delta_1 u-s_1(n_1-s_1)u\in L^2(\mathcal X_1)$ and $\phi_2:= -\Delta_2 u-s_2(n_2-s_2)u\in L^2(\mathcal X_2)$.
%\end{remark}

\begin{lemma}\label{lemma-four-terms}
Given $F$ as in Lemma \ref{lemma:formal-solution}, there exist terms $G$, $H$, $I$ smooth up the whole boundary of $\mathcal X_1\times \mathcal X_2$ so that
\begin{equation*}
u=y_1^{n_1-s_1}y_2^{n_2-s_2}  F +y_2^{s_2} y_1^{n_1-s_1 }G+y_1^{s_1}y_2^{n_2-s_2}H+y_1^{s_1}y_2^{s_2} I.
\end{equation*}
 is a formal solution to the system \eqref{system1}-\eqref{system2}.
\end{lemma}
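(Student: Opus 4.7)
\emph{Proof plan.} My strategy is to build the four summands $y_1^{n_1 - s_1} y_2^{n_2 - s_2} F$, $y_1^{n_1 - s_1} y_2^{s_2} G$, $y_1^{s_1} y_2^{n_2 - s_2} H$, $y_1^{s_1} y_2^{s_2} I$ separately, so that each one is a formal solution of both \eqref{system1} and \eqref{system2} modulo $O(y_1^\infty)$ and $O(y_2^\infty)$, and then to add them together. The ansatz is forced by the fact that the indicial roots of $D_i$ at $\mathcal B_i$ are $\{n_i - s_i,\, s_i\}$, and under the genericity hypothesis of Lemma~\ref{lemma:formal-solution} the two indicial branches in each variable do not resonate.

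The first summand is exactly $F$ from Lemma~\ref{lemma:formal-solution}. For the remaining three, I will run the same inductive Graham--Zworski recursion used in the proof of Lemma~\ref{lemma:formal-solution}, but with one or both of the indicial exponents $n_i - s_i$ replaced by $s_i$. To illustrate with $G$: I seek the formal expansion $G = \sum_{i,j \geq 0} g_{ij}(x_1, x_2)\, y_1^i\, y_2^j$ with $g_{00}$ freely chosen. The requirement $D_1(y_1^{n_1 - s_1} y_2^{s_2} G) = O(y_1^\infty)$ reduces, after pulling the factor $y_2^{s_2}$ out of $D_1$, to the same $y_1$-recursion as in Lemma~\ref{lemma:formal-solution}, in which one divides by $i(2s_1 - n_1 - i)$. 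The requirement $D_2(y_1^{n_1 - s_1} y_2^{s_2} G) = O(y_2^\infty)$ is the analogous recursion for the other indicial root of $D_2$, where the denominator becomes $j(n_2 - 2s_2 - j)$; since $s_2 > n_2/2$, one has $n_2 - 2s_2 < 0$, so this denominator never vanishes for $j \geq 1$ and the recursion is automatically unobstructed. The two recursions commute because they act on disjoint variables, determining the coefficients $g_{ij}$ consistently in any order, and a Borel summation as in Lemma~\ref{lemma:formal-solution} yields $G \in \mathcal C^\infty(\overline{\mathcal X_1 \times \mathcal X_2})$. The construction of $H$ is identical with the roles of the two factors swapped, and the construction of $I$ uses the indicial root $s_i$ in both variables, so that both recursions are unobstructed at no extra cost.

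Summing the four terms and using linearity of $D_1, D_2$, each summand contributes $O(y_1^\infty)$ to $D_1 u$ and $O(y_2^\infty)$ to $D_2 u$, and $u$ is a formal solution of the system as claimed. The main obstacle to this construction is a potential resonance between the two indicial branches in a single variable, which is precisely ruled out by the hypothesis $s_i \notin \tfrac{n_i}{2} + \mathbb N$ of Lemma~\ref{lemma:formal-solution}; in the excluded cases one would need to incorporate logarithmic corrections of the form $\log y_i$ in the summands with prefactor $y_i^{s_i}$, paralleling the trivial-pole discussion of Remark~\ref{polosGZ} separately in each variable.
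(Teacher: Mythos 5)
Your proof is correct for the lemma as stated, and the observation driving it is the right one: after switching the indicial exponent $n_i-s_i$ to $s_i$, the Graham--Zworski recursion acquires a denominator $j(n_i-2s_i-j)$, which is automatically nonzero for $j\geq 1$ because $s_i>n_i/2$, so those branches of the recursion never resonate and you may feed in any leading coefficient. The commutativity of the two single-variable recursions, which you invoke to resolve the order-of-construction ambiguity in the double index $(i,j)$, does hold here precisely because the metric is a Riemannian product, so the differential operators $D_1$ and $D_2$ and the coefficients of their Taylor expansions live in disjoint sets of variables.

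That said, you take a genuinely different route from the paper, and the difference matters downstream. You construct $G$, $H$, $I$ purely formally with freely chosen leading coefficients $g_{00}$, $h_{00}$, $w_{00}$. The paper instead runs the rank-one Lemma~\ref{lemma-dim1} \emph{slicewise}: fixing the second slot, it completes $y_1^{n_1-s_1}F_{*j}$ to the exact rank-one solution $y_1^{n_1-s_1}F_{*j}+y_1^{s_1}H_{*j}$, thereby forcing $h_{0j}=\mathcal S^{(1)}(s_1) f_{0j}$ and, by symmetry, $g_{i0}=\mathcal S^{(2)}(s_2) f_{i0}$ and $w_{00}=\mathcal S^{(1)}(s_1)\mathcal S^{(2)}(s_2) f$, together with the compatibility check $I=\tilde I$ that the two slicing orders agree. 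These identifications \eqref{sos1}--\eqref{sos5} are precisely what produce the explicit scattering matrix \eqref{scattering-model} and what tie Lemma~\ref{lemma-four-terms} to the exact solution built via the resolvent in Lemma~\ref{lemma-four-terms2}. Your construction, while cleaner and more self-contained (it never needs the rank-one resolvent), produces \emph{some} formal solution but not the canonical one: with arbitrary $g_{00},h_{00},w_{00}$ (e.g.\ $G=H=I=0$, which also satisfies the existential statement), the formal series need not be the asymptotics of the exact solution the paper goes on to use, and the link to the rank-one scattering operators is lost. Were the lemma being used only as an existence statement your argument would suffice; since the paper uses it to identify the Dirichlet-to-Neumann map, the extra structure in the paper's proof is doing real work.

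Two minor points: you should state explicitly that the consistency of the two recursions uses the product structure of the metric (the coefficients of $D_1$ are independent of $(x_2,y_2)$ and vice versa), rather than only that the operators act in disjoint variables; and the Borel summation you invoke must be performed in both $y_1$ and $y_2$ simultaneously, which works but deserves a word since the one-variable version is what is cited in Lemma~\ref{lemma:formal-solution}.
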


\begin{proof}
The main idea is to look separately at each equation in the system, keeping the other variable fixed, and construct a solution piece by piece.

In the first step, we write
$$u=y_2^{n_2-s_2} \left( y_1^{n_1-s_1}F+y_1^{s_1}H \right)+y_2^{s_2}\left( y_1^{n_1-s_1 }G+y_1^{s_1} I\right),$$
where $F$ is the approximate solution given in Lemma \ref{lemma:formal-solution}, and $H$, $G$, $I$ are to be found.

\textbf{Claim 1.} There exists an exact solution to \eqref{system1} of the form
$$y_1^{n_1-s_1} F+y_1^{s_1}H,$$
for some $$H=\sum_{i}\sum_j h_{ij} y_1^i y_2^j.$$

\noindent To see this claim, write $$F=\sum_i \sum_j f_{ij} y_2^jy_1^i=\sum_j F_{*j} y_2^{j},$$
for
$$F_{*j}:=\sum_{i} f_{ij} y_1^{i}. $$
Recall from the construction of Lemma \ref{lemma:formal-solution} that
\begin{equation}\label{sos1}
f_{ij}=p^{(1)}_{i,s_1} p^{(2)}_{j,s_2}f,\end{equation}
 so that $F_{*j}$ is a formal solution to \eqref{system1} for each fixed $j=0,1,\ldots$, where we are keeping
$x_2$ fixed. From Lemma \ref{lemma-dim1}, given $F_{*j}$,
there exists an exact solution to  \eqref{system1} of the form
$$y_1^{n_1-s_1} F_{*j}+y_1^{s_1}H_{*j},$$
for some
$$H_{*j}:=\sum_i h_{ij} y_1^i. $$
Moreover, formula \eqref{operator-q} gives the following characterization
\begin{equation}\label{sos3}h_{0j}=\mathcal S^{(1)}(s_1) f_{0j},\quad h_{ij}=q^{(1)}_{i,s_1} h_{0j},\end{equation}
for each $j=0,1,\ldots$. The claim follows by linearity.\\

\textbf{Claim 2.} $y_2^{n_2-s_2}H$ is a formal solution to \eqref{system2}.

\noindent This follows because
we can commute
\begin{equation*}
D_2(y_2^{n_2-s_2}H)=\sum_{i}  y_1^{i} D_2 \big(y_2^{n_2-s_2} \sum_j h_{ij}y_2^{j} \big )
= \sum_i y_1^{i}\, q^{(1)}_{i,s_1} \mathcal S^{(1)}(s_1) D_2\big(y_2^{n_2-s_2}\sum_j f_{0j} y_2^j\big),
\end{equation*}
and $y_2^{n_2-s_2}\sum_j f_{0j} y_2^j$ is a formal solution to \eqref{system2} from the proof of Lemma \ref{lemma:formal-solution}. The claim is proved.\\

Next, at the second step we interchange the role of $y_1$, $y_2$, writing
$$u=y_1^{n_1-s_1} \left( y_2^{n_2-s_2}F+y_2^{s_2}G \right)+y_1^{s_1}\left( y_2^{n_2-s_2 }H+y_2^{s_2} \tilde I\right).$$

\textbf{Claim 3.} There exists an exact solution to \eqref{system2} of the form
$$y_2^{n_2-s_2} F+y_2^{s_2}G,$$
for some $$G=\sum_{j}\sum_i g_{ij} y_1^i y_2^j.$$

\noindent The proof of this claim is analogous to the one of Claim 1. Just note that
\begin{equation}\label{sos2}g_{i0}=\mathcal S^{(2)}(s_2) f_{i0},\quad g_{ij}=q^{(2)}_{j,s_2} g_{i0}.\end{equation}

\textbf{Claim 4.} Given $G$ as in the previous claim, there exists an exact solution to \eqref{system1} of the form
$$y_1^{n_1-s_1} G+y_1^{s_1}I,$$
for some $$I=\sum_{j}\sum_i w_{ij} y_1^i y_2^j.$$

\noindent To show this, first we need to make sure that $G$ provides a good Dirichlet data for \eqref{system1}. But this is a consequence of the fact that, for each $j=0,1,\ldots$ fixed,
$$g_{ij}=p_{i,s_1}^{(1)}  \left(p_{0,s_2}^{(2)}q_{j,s_2}^{(2)}\mathcal S^{(2)}(s_2)f\right),$$
which follows by combining formulas \eqref{sos1} and \eqref{sos2}. The claim follows then by applying Lemma \ref{lemma-dim1}. Moreover, it gives that
\begin{equation}\label{sos4}w_{0j}=\mathcal S^{(1)}(s_1) g_{0j},\quad w_{ij}=q^{(1)}_{i,s_1} w_{0j}.\end{equation}

\textbf{Claim 5.} $y_1^{n_1-s_1}G$ is a formal solution to \eqref{system1}.

\noindent The proof is analogous to that of Claim 2.\\

\textbf{Claim 6.} Given $H$ as in Claim 1, there is an exact solution to \eqref{system2} of the form
$$y_2^{n_2-s_2} H+y_2^{s_2}\tilde I,$$
for some $$\tilde I=\sum_{i}\sum_j \tilde w_{ij} y_1^i y_2^j.$$

\noindent We follow the ideas from Claim 4. First, from \eqref{sos1} and \eqref{sos3} we have that
$$H=p^{(2)}_{j,s_2} \left( p_{0,s_1}^{(1)} q_{i,s_1}^{(1)}\mathcal S^{(1)}(s_1) f\right),$$
so it gives a good starting Dirichlet data for \eqref{system2}. Then
Lemma \ref{lemma-dim1} determines $\tilde I$ as stated in the claim. Moreover, for every  $i=0,1,\ldots$ fixed,
\begin{equation}\label{sos5} \tilde w_{i0}=\mathcal S^{(2)}(s_2) h_{i0},\quad\tilde w_{ij}=q^{(2)}_{j,s_2} \tilde w_{i0}.\end{equation}
This concludes the proof of the claim.\\

To finish the proof of Lemma \ref{lemma-four-terms} one needs to check that our choices are compatible and that they determine the same $I=\tilde I$.
But, from \eqref{sos4} and \eqref{sos2} one has
\begin{equation*}
w_{ij}=q^{(1)}_{i,s_1} \mathcal S^1(s_1) q^{(2)}_{j,s_2} \mathcal S^2(s_2) f,
\end{equation*}
and from \eqref{sos5} and \eqref{sos3},
$$\tilde w_{ij}=q^{(2)}_{j,s_2} \mathcal S^2(s_2) q^{(1)}_{i,s_1} \mathcal S^1(s_1) f.$$
Because the operators are applied to different variables $(x_1,y_1)$ and $(x_2,y_2)$, they commute and we automatically get $I=\tilde I$, as desired. Moreover, Claims 1 and 4 show that the $u$ we have constructed is a solution to \eqref{system1}, while Claims 3 and 6 yield that $u$ is a solution to \eqref{system2}, so we have constructed a suitable solution for the whole system.
\end{proof}

Note that the computations above are formal since they do not guarantee the convergence of the series. We will deal with this issue in the coming subsection.

\subsubsection{Proof of Theorem \ref{thm:expansion-of-sol} and further comments }\label{rigorousconstruction}

In this subsection we give a rigorous construction of the functions $G$, $H$ and $I$ of Lemma
\ref{lemma-four-terms}.

\begin{lemma}\label{lemma-four-terms2}
Given $F$ as in Remark \ref{approximate solution}, there exist terms $G$, $H$, $I$ smooth up the whole boundary of $\mathcal X_1\times \mathcal X_2$ so that
\begin{equation}\label{expansion-u}
u:=y_1^{n_1-s_1}y_2^{n_2-s_2}  F +y_2^{s_2} y_1^{n_1-s_1 }G+y_1^{s_1}y_2^{n_2-s_2}H+y_1^{s_1}y_2^{s_2} I.
\end{equation}
 is an exact solution to the system \eqref{system1}-\eqref{system2}.
\end{lemma}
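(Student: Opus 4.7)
The plan is to realize the formal construction of Lemma \ref{lemma-four-terms} as a closed-form expression built from the Mazzeo--Melrose resolvents $\mathcal R_1, \mathcal R_2$ of $D_1, D_2$, exploiting the crucial fact that $D_1$ and $D_2$ act on independent factors of the product and therefore commute as differential operators on $\mathcal X_1 \times \mathcal X_2$; the same commutation will be inherited by their resolvents, suitably interpreted as acting parametrically in the transverse factor. Starting from the approximate solution $u_0 := y_1^{n_1-s_1} y_2^{n_2-s_2} F$ produced by Lemma \ref{lemma:formal-solution} and refined by Borel's lemma, the errors $\phi_1 := D_1 u_0$ and $\phi_2 := D_2 u_0$ vanish to infinite order in $y_1$ and $y_2$ respectively (each retaining the indicial prefactor in the other variable). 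The candidate exact solution is then defined by the inclusion--exclusion formula
\begin{equation*}
u := u_0 - \mathcal R_1 \phi_1 - \mathcal R_2 \phi_2 + \mathcal R_1 \mathcal R_2 (D_1 D_2 u_0).
\end{equation*}

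Using $D_i \mathcal R_i = I$ and $[D_i, \mathcal R_j] = 0$ for $i\neq j$, a short computation yields
\begin{equation*}
D_1 u = \phi_1 - \phi_1 - \mathcal R_2 D_1 \phi_2 + \mathcal R_2 D_1 D_2 u_0 = 0,
\end{equation*}
since $D_1 \phi_2 = D_1 D_2 u_0$, and symmetrically $D_2 u = 0$. For the expansion \eqref{expansion-u}, I invoke a parameter version of Lemma \ref{lemma-dim1}: because $\phi_1 = y_2^{n_2-s_2} \widetilde\phi_1$ with $\widetilde\phi_1 = O(y_1^{\infty})$ smooth in $(x_2,y_2)$ across $\mathcal B_2$, the first resolvent produces $\mathcal R_1 \phi_1 = y_1^{s_1} y_2^{n_2-s_2} \widetilde H$ with $\widetilde H$ smooth up to the full boundary. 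Symmetrically $\mathcal R_2 \phi_2 = y_1^{n_1-s_1} y_2^{s_2} \widetilde G$, while applying the (commuting) composition to the doubly vanishing $D_1 D_2 u_0$ gives $\mathcal R_1 \mathcal R_2 (D_1 D_2 u_0) = y_1^{s_1} y_2^{s_2} \widetilde I$. Setting $G := -\widetilde G$, $H := -\widetilde H$, $I := \widetilde I$ yields exactly the form \eqref{expansion-u}.

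The main obstacle is justifying the two structural ingredients that drive the argument: the parameter mapping property $\mathcal R_i \colon y_i^\infty\,C^\infty \to y_i^{s_i} C^\infty(\overline{\mathcal X_1 \times \mathcal X_2})$ with smooth transverse dependence, and the commutation $[\mathcal R_1, \mathcal R_2] = 0$. The commutation is conceptually clean because each $\mathcal R_i$ is built from functional calculus on the $\mathcal X_i$ factor, so they act on orthogonal tensor components; it is where the product structure of the geometry enters decisively, distinguishing this case from the general asymptotically product case treated later. The parameter smoothness requires upgrading the Mazzeo--Melrose 0-calculus on $(\mathcal X_i, g_i^+)$ to allow smooth dependence on external parameters, which is standard away from the corner $\{y_1 = y_2 = 0\}$; at the corner, one must check no additional conormal singularities arise beyond the four indicial pairs $(n_i-s_i, s_i)$, which is consistent with the product structure and recovers, in the exact symmetric-space case, the expansion predicted by \cite{KKMOOT}.
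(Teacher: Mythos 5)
Your proof is correct and is essentially the paper's own argument: both exploit that the Mazzeo--Melrose resolvents $\mathcal R_1,\mathcal R_2$ act in disjoint factors of $\mathcal X_1\times\mathcal X_2$ and therefore commute, together with the Graham--Zworski mapping properties, to upgrade the Borel-summed formal solution to an exact one exhibiting all four indicial powers. The only cosmetic difference is that you package the correction as a single inclusion--exclusion formula $u = u_0 - \mathcal R_1\phi_1 - \mathcal R_2\phi_2 + \mathcal R_1\mathcal R_2(D_1 D_2 u_0)$ and verify $D_i u = 0$ algebraically, whereas the paper constructs $G$, $H$, $I$ one at a time from $\mathcal R_1 f$, $\mathcal R_2 f$, $\mathcal R_1\mathcal R_2 f$ and then assembles $u$ by grouping terms and linearity.
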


\begin{proof}
As before, the main idea is to look separately at each equation in the system, keeping the other variable fixed, and construct a solution piece by piece. For $i=1,2$, let us denote by $\mathcal R_i$ the resolvent operator in $(\mathcal X^{N_i},g_i^+)$ given by Lemma \ref{lemma-dim1}. Note that $\mathcal R_i$ only acts on the coordinates $x_i$ of $\mathcal X^{N_i}$ and in particular $\mathcal R_1 \mathcal R_2 f(x_1,x_2)=\mathcal R_2 \mathcal R_1 f(x_1,x_2)$.

Let us define
\begin{align*}y_1^{s_1}H(x_1,y_1,x_2,y_2)=&\mathcal R_1 f,\\
y_2^{s_2}G(x_1,y_1,x_2,y_2)=&\mathcal R_2 f,\\
\hat{I}(x_1,y_1,x_2,y_2)=&\mathcal R_1(y_2^{s_2} G)=\mathcal R_1 \mathcal R_2 f=\mathcal R_2 \mathcal R_1 f=\mathcal R_2(y_1^{s_1} H),\\
y_1^{s_1}y_2^{s_2}I(x_1,y_1,x_2,y_2)=&\hat{I}.
\end{align*}
%\textcolor{red}{$H$ no depende de $y_2$, $G$ no depende de $y_1$, no?}
Note that the decays above are guaranteed by \cite{Mazzeo-Melrose} and arise from the decay of the resolvents $\mathcal R_1$ and $\mathcal R_2$.

From Lemma  \ref{lemma-dim1} we have that
$u_1= y_1^{n_1-s_1}F+y_1^{s_1} H$ is a solution to \eqref{system1}. Similarly,
$u_2=y_2^{n_2-s_2}F+y_2^{s_2} G$ is a solution to \eqref{system2} and
$u_3=  y_1^{n_1-s_1}G+y_1^{s_1} I$ is a solution to \eqref{system1}. However, since $R_1$ and
$R_2$ commute, we also have that $u_4=  y_2^{n_2-s_2}H+y_2^{s_2} I$ is a solution to \eqref{system2}.  Now, using the linearity,  it is direct to check that  $u=y_1^{n_1-s_1}y_2^{n_2-s_2}  F +y_2^{s_2} y_1^{n_1-s_1 }G+y_1^{s_1}y_2^{n_2-s_2}H+y_1^{s_1}y_2^{s_2} I$ is a solution to \eqref{system1}-\eqref{system2}.
\end{proof}

Note that the proof of Theorem \ref{thm:expansion-of-sol} follows from the previous Lemma.\\

\begin{remark}\label{remark-poles}
Looking at the modifications in Remark \ref{polosGZ} to treat the pole values, the same ideas will work if, for instance $s_1=\frac{n}{2}+k$, $k_1\in\mathbb N$. In particular, expansion \eqref{expansion-u} needs to be replaced by:
\begin{itemize}
\item If $s_2\not\in\frac{n}{2}+\mathbb N$,
$$u=y_1^{n_1-s_1}y_2^{n_2-s_2}  F +y_2^{s_2} y_1^{n_1-s_1 }G+y_1^{s_1}\log y_1y_2^{n_2-s_2}H+y_1^{s_1}\log y_1y_2^{s_2} I.$$
\item If $s_2=\frac{n}{2}+k_2$, $k_2\in\mathbb N$,
$$u=y_1^{n_1-s_1}y_2^{n_2-s_2}  F +y_2^{s_2} \log y_2y_1^{n_1-s_1 }G+y_1^{s_1}\log y_1y_2^{n_2-s_2}H+y_1^{s_1}\log y_1y_2^{s_2} \log y_2 I.$$
\end{itemize}
The scattering map is also well defined in this case (by using a residue formula).
\end{remark}

Our next objective is to deepen into the relation of our result to those of \cite{Mazzeo-Vasy:products}, at least in the product of two hyperbolic spaces $\mathcal X=\mathbb H^{N_1}\times\mathbb H^{N_2}$, through the use of Mellin transform. We will use the notation from Subsection \ref{subsection:product-hyperbolic}.

The results in \cite{Mazzeo-Vasy:products} concern the behavior of the resolvent $\mathcal R$ for the equation
\begin{equation}\label{equation-weak}
\Delta_{\mathcal X} v+\mu v=\phi.\end{equation}
The Mellin transform of a function $v:(0,\infty)\to \mathbb R$ is defined as
$$M[v](\zeta)=\int_0^\infty v(t) t^{\zeta-1}\,dt.$$
Take the Mellin transform of equation \eqref{equation-weak}
both in the variables $y_1$ and $y_2$ (denoted by $M_{12}$). By well known properties, we arrive to
$$[P_1(\zeta_1)+P_2(\zeta_2)] M_{12}[v]=M_{12}[\phi],$$
where $P_\alpha$ is the inditial polynomial for $\Delta_\alpha+s_\alpha(n_\alpha-s_\alpha)$, $\alpha=1,2$, i.e.,
$$P_\alpha(\zeta_\alpha)=\zeta_\alpha^2+n_\alpha \zeta_\alpha+s_\alpha(n_\alpha-s_\alpha),$$
and we have set $\mu=\mu_1+\mu_2$ with the usual conventions.
Let
$$P(\zeta_1,\zeta_2):=P_1(\zeta_1)+P_2(\zeta_2).$$
It is clear that this polynomial vanishes (only) on the circle
$$\left(\zeta_1+\tfrac{n_1}{2}\right)^2+\left(\zeta_2+\tfrac{n_2}{2}\right)^2=\gamma_1^2+\gamma_2^2=:|\gamma|^2,$$
so one needs to be careful when we invert the Mellin transform due to the presence of singularities
\begin{equation}\label{inverse-Mellin}v=\mathcal R(\mu)\phi:=\frac{1}{(2\pi i)^2} \int_{c_1-i\infty}^{c_1+i\infty} \int_{c_2-i\infty}^{c_2+i\infty} \frac{M_{12}[\phi]}{P(\zeta_1,\zeta_2)} y_1^{-\zeta_1} y_2^{-\zeta_2}\,d\zeta_1 \,d\zeta_2.  \end{equation}

One of the main results in \cite{Mazzeo-Vasy:products} is the calculation of the asymptotic behavior of the resolvent for \eqref{equation-weak} near the corner $\mathcal B_1\times \mathcal B_2=\{y_1=y_2=0\}$. Indeed, they show that
\begin{equation*}\label{resolventMV}\mathcal R(\mu)\phi \sim y_1^{n_1/2} y_2^{n_2/2} e^{\frac{|\gamma|}{\rho}},\end{equation*}
where $\rho^{-1}=\sqrt{\rho_1^{-2}+\rho_2^{-2}}$, $\rho_\alpha=-1/\log y_\alpha$, $\alpha=1,2$. Their proof can be recovered by using the stationary phase lemma to estimate the asymptotic behavior of the integral \eqref{inverse-Mellin} when $y_1,y_2\to 0$ (see, for instance, the book \cite{Bender-Orszag} for a standard reference on the stationary point lemma).

Let us see, at least formally, how this compares to our method. One just needs to observe that, if we take an approximate solution
$$u_0=y_1^{n_1-s_1}y_2^{n_2-s_2}  F +y_2^{s_2} y_1^{n_1-s_1 }G+y_1^{s_1}y_2^{n_2-s_2}H,$$
for equation \eqref{equation-weak},
we can find an exact solution of the form $u=u_0+v$, for
$$v=\mathcal R(\mu)\phi_0,\quad \phi_0=-(\Delta_{12}+\mu)(u_0).$$
We would like to show that such $v$ is the remaining term in the expansion \eqref{expansion-product}, i.e.,
\begin{equation}\label{equation30}v\sim y_1^{s_1} y_2^{s_2},\quad \text{as } y_1,y_2\to 0.\end{equation}

For this, define $D_\alpha:=\Delta_\alpha+\mu_\alpha$, $\alpha=1,2$, and $D_{12}=D_1+D_2$. Calculate
$$-\phi_1:=D_1(u_0)=y_2^{n_2-s_2} D_1\left(y_1^{n_1-s_1}F+y_1^{s_1}H\right)+y_2^{s_2}D_1\left(y_1^{n_1-s_1}G\right).$$
From Claim 1 in Lemma \ref{lemma-four-terms}, we know that the first term vanishes because we have constructed an exact solution to \eqref{system1}. The second term in the sum above vanishes in the variable $y_1$ up to infinite order thanks to Claim 5 in that lemma.
A similar calculation tells us that
$$-\phi_2:=D_2(u_0)=y_1^{n_1-s_1} D_2\left(y_2^{n_2-s_2}F+y_2^{s_2}G\right)+y_1^{s_1}D_2\left(y_2^{n_2-s_2}H\right)$$
vanishes up to infinite order in $y_2$, thanks to Claims 3 and 2.  We conclude that
$$-\phi=D_{12}u_0=-\phi_1-\phi_2=O(y_1^{\infty}y_2^{s_2})+O(y_1^{s_1}y_2^{\infty}).$$
Using the resolvent in the rank one case, since $\phi_1=O(y_1^{\infty}y_2^{s_2})$, there exists a solution $v_1$ for
$D_1(v_1)=\phi_1$ with the asymptotic behavior $v_1\sim y_1^{s_1}y_2^{s_2}$. Taking the Mellin transform, and looking at the first order term,
$$M_1[\phi_1]=P_1(\zeta_1) M_1[v_1]\sim P_1(\zeta_1)\delta_{y_1-s_1} y_2^{s_2}.$$
An analogous calculation gives that
$$M_2[\phi_2]=P_2(\zeta_2) M_2[v_2]\sim P_2(\zeta_2)\delta_{y_2-s_2} y_1^{s_1}.$$
In any case,
\begin{equation*}\label{Mphi}
M_{12}[\phi]=M_{12}[\phi_1]+M_{12}[\phi_2]\sim \left[P_1(\zeta_1)+P_2(\zeta_2)\right] \delta_{y_1-s_1} \delta_{y_2-s_2},\end{equation*}
so we can calculate $v$ as the inverse Mellin transform \eqref{inverse-Mellin}.
But the presence of the delta functions above yields \eqref{equation30}, as desired.

\subsection{The scattering map}

Equipped with Theorem \ref{thm:expansion-of-sol}, we are ready to define the scattering map with respect to the metric \eqref{metric-product} for the product of two conformally compact Einstein manifolds. As usual, we assume that we are not in the exceptional values.

Inspired by \eqref{scattering} in the symmetric space case, looking at the expansion \eqref{expansion-product}, it is natural to define
\begin{equation}\label{scattering-definition}
\mathcal S^{(\mu_1,\mu_2)} f:=\begin{pmatrix}
f & g_{00}\\
h_{00} & w_{00}
\end{pmatrix},\quad \text{for}\quad \mu_1\in\big(0,\left(\tfrac{n_1}{2}\right)^2\big),\, \mu_2\in\big(0,\left(\tfrac{n_2}{2}\right)^2\big),
\end{equation}
where $f$, $g_{00}$, $h_{00}$ and $w_{00}$ are, respectively, the boundary values of $F$, $G$, $H$ and $I$ on the distinguished boundary..
Because of our construction, it is easy to explicitly identify all the terms,
\begin{equation}\label{scattering-model}
\mathcal S^{(\mu_1,\mu_2)} f=\begin{pmatrix}
Id & \mathcal S^{(2)}(s_2)\\
\mathcal S^{(1)}(s_1) & \mathcal S^{(1)}(s_1)\,\mathcal S^{(2)}(s_2)
\end{pmatrix} f.
\end{equation}

Let us try to understand the behavior of this construction under conformal changes. However, in order to  mimmick the symmetric space setting of the product of two hyperbolic spaces, we just allow conformal changes in each variable separately. Thus, we perform a conformal change
$$\tilde h_\alpha=e^{2v_\alpha} h_\alpha, $$
for some $v_\alpha$ function on $\mathcal B_\alpha$.
Then one may a new defining function $\tilde y_\alpha$, $\alpha=1,2$, so that the metric is rewritten as
$$g^+_\alpha=\frac{d\tilde y_\alpha^2+\tilde h_{y_\alpha}}{\tilde y_\alpha^2},\quad h_{\tilde y_\alpha}|_{\tilde y_\alpha=0}=h_\alpha.$$
Moreover, the new defining function satisfies
$$\frac{\tilde y_\alpha}{y_\alpha}=e^{v_\alpha}+l.o.t. \text{ near }\{y_\alpha=0\}.$$
The scattering operator satisfies the following conformal property (componentwise):
\begin{equation*}
\tilde{\mathcal S}^{(\mu_1,\mu_2)}(f)=\begin{pmatrix}
e^{-(\frac{n_1}{2}-\gamma_1)w_1}e^{-(\frac{n_2}{2}-\gamma_2)w_2} \mathcal S_{11}f& e^{-(\frac{n_1}{2}-\gamma_1)w_1}e^{-(\frac{n_2}{2}+\gamma_2)w_2}\mathcal S_{12}f\\
e^{-(\frac{n_1}{2}+\gamma_1)w_1}e^{-(\frac{n_2}{2}-\gamma_2)w_2} \mathcal S_{21}f&
e^{-(\frac{n_1}{2}+\gamma_1)w_1}e^{-(\frac{n_2}{2}+\gamma_2)w_2}\mathcal S_{22}f
\end{pmatrix},
\end{equation*}
for the matrix
\begin{equation*}
\mathcal Sf:=\mathcal S^{(\mu_1,\mu_2)}\lp e^{(\frac{n_1}{2}-\gamma_1)w_1}e^{(\frac{n_2}{2}-\gamma_2)w_2} f\rp,
\end{equation*}
where we have denoted $\gamma_\alpha=\sqrt{\left(\frac{n_\alpha}{2}\right)^2-\mu_\alpha}$, $\alpha=1,2$.\\

We note that in the case of the product of two hyperbolic spaces the scattering matrix can be expressed in terms of the standard fractional Laplacian in $\mathbb{R}^n$. More precisely, in the expansion \eqref{expansion-product} we have

\begin{align*}
d_{\gamma_2} G|_{\{y_1=y_2=0\}}=&(-\Delta_{x_2})^{\gamma_2} f, \\
d_{\gamma_1} H|_{\{y_1=y_2=0\}}=&(-\Delta_{x_1})^{\gamma_1} f, \\
d_{\gamma_1}d_{\gamma_2} I|_{\{y_1=y_2=0\}}=&(-\Delta_{x_2})^{\gamma_2} (-\Delta_{x_1})^{\gamma_1}f,
\end{align*}
where $d_{\gamma_i}=2^{2\gamma_i}\frac{\Gamma(\gamma_i)}{\Gamma(-\gamma_i)}$, $i=1,2$.

We remark here that in \cite{Huang:thesis} the author studies the resolvent for \eqref{equation-weak} in the spirit of \cite{Mazzeo-Vasy:products}, and constructs a scattering-type operator in the weak $\mu$-harmonic function setting. This operator is a combination, depending on the angle (which is equivalent to the combination of $(\mu_1,\mu_2)$, $\mu_1+\mu_2=\mu$, of both $\mathcal S^{(1)}(s_1)$ and $\mathcal S^{(2)}(s_2)$. However, there is not a clear notion of conformality for this operator.

\subsection{The asymptotically product case}

Asymptotically product hyperbolic metrics were considered in great generality in
 \cite{Biquard-Mazzeo:products}, but there is not yet a uniform definition. The resolvent and the scattering operator for weakly harmonic functions were considered in \cite{Mazzeo-Vasy:products,Huang:thesis}. Here we would like to obtain the analogue to Theorem \ref{thm:expansion-of-sol} for strongly harmonic functions in order to define a scattering operator with good conformal properties. The main idea is to consider a perturbation of the product of two conformally compact Einstein manifolds $\mathcal X=\mathcal X_1\times \mathcal X_2$. In order to simplify our presentation, we just allow compact perturbations, but it is clear that our method would work for a more general perturbation as long as it has the right decay near the walls. Again, we stay away from the exceptional values.

\begin{thm}\label{thm:expansion2} Let $(\mathcal X_1,g_1)$, $(\mathcal X_1,g_1)$ be two conformally compact Einstein manifolds with conformal infinities $(\mathcal B_1,[h_1])$, $(\mathcal B_2,[h_2])$ and defining functions $y_1,y_2$, respectively.
Let $k$ be a compact perturbation for the metric $g_0=g_{1}+g_{2}$, with support away from all walls, and consider the metric on $\mathcal X:=\mathcal X_1\times \mathcal X_2$ given by $g^+=g_0+k$.

Then, given $f$ smooth on $\mathcal B_1\times\mathcal B_2$ and $\mu_1,\mu_2$, there exists a solution to
\begin{equation}-\Delta_{g^+} u-\mu u=0\quad \text{in }\mathcal X \label{full equation in product}\end{equation}
with the asymptotic expansion near the walls as in  \eqref{expansion-product}, for $\mu=\mu_1+\mu_2$.
\end{thm}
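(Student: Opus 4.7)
The overall strategy is a classical parametrix-plus-resolvent argument, taking advantage of the fact that the metric $g^+$ coincides with the exact product metric $g_0=g_1+g_2$ in a neighbourhood of the walls $\mathcal{B}_1\cup\mathcal{B}_2$. The plan is: (i) build an approximate solution $u_0$ near the walls using Theorem \ref{thm:expansion-of-sol}, which already has the desired four-term expansion \eqref{expansion-product}; (ii) cut it off so that it is globally defined, producing a compactly-supported error; (iii) correct by the resolvent $\mathcal{R}_{g^+}(\mu)=(-\Delta_{g^+}-\mu)^{-1}$ for the perturbed metric; and (iv) verify that the correction does not spoil the leading asymptotics.

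For step (i), fix a neighbourhood $U$ of the walls on which $k\equiv 0$, so that $g^+=g_0$ on $U$. Writing $\mu=\mu_1+\mu_2$ and choosing $s_i=\tfrac{n_i}{2}+\gamma_i$ as in \eqref{gamma-nu}, Theorem \ref{thm:expansion-of-sol} applied to the product metric produces a strongly $(\mu_1,\mu_2)$-harmonic function $u_0$ on $U$ with the expansion \eqref{expansion-product}. In particular $u_0$ solves $(-\Delta_{g_0}-\mu)u_0=0$ on $U$, since this single equation is implied by the pair of equations \eqref{system1}-\eqref{system2}. For step (ii), pick a smooth cutoff $\chi$ supported in $U$, identically equal to $1$ in a smaller neighbourhood $U'$ of the walls, and with $\chi\equiv 0$ on $\operatorname{supp}(k)$. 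Set $\tilde u_0=\chi u_0$ and define the error
\begin{equation*}
\phi := (-\Delta_{g^+}-\mu)\tilde u_0 = [\,\Delta_{g^+},\chi\,]\,u_0,
\end{equation*}
which is compactly supported in the annular region $\{0<\chi<1\}\subset U\setminus U'$, since on $\{\chi=1\}\subset U$ we have $g^+=g_0$ and $u_0$ solves the equation exactly, while on $\{\chi=0\}$ the whole expression vanishes.

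For step (iii), since $(\mathcal{X},g^+)$ is a compact perturbation of the asymptotically product manifold $(\mathcal{X},g_0)$, the resolvent $\mathcal{R}_{g^+}(\mu)$ is meromorphic as established in \cite{Mazzeo-Vasy:products,Huang:thesis}; assuming $\mu$ is not a pole, set $v:=\mathcal{R}_{g^+}(\mu)\phi$ and $u:=\tilde u_0 - v$. Then $(-\Delta_{g^+}-\mu)u=0$ by construction. For step (iv), the key point is that $\phi$ is smooth and compactly supported, hence has trivial asymptotic data on every wall; on the product background, the Mellin/stationary-phase analysis recalled in Section \ref{section:asymptotically-product} (or the inverse Mellin formula \eqref{inverse-Mellin} and the discussion around \eqref{equation30}) shows that $\mathcal{R}_{g_0}(\mu)\phi$ has leading behaviour $y_1^{s_1}y_2^{s_2}$ near the corner and $y_\alpha^{s_\alpha}$ times a smooth function on each wall. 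Because $g^+-g_0$ and the induced difference of Laplacians are compactly supported away from the walls, the same mapping properties persist for $\mathcal{R}_{g^+}(\mu)$ by a standard parametrix comparison (resolvent identity plus compactness of the perturbation). Consequently $v$ contributes only to the $y_1^{s_1}y_2^{s_2}I$ piece of \eqref{expansion-product}, and $u=\tilde u_0-v$ inherits the full four-term expansion, with Dirichlet datum $f$ arising from $\tilde u_0$.

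The main obstacle is precisely the last step: controlling the decay of $\mathcal{R}_{g^+}(\mu)\phi$ near the corner $\mathcal{B}_1\times\mathcal{B}_2$ and verifying that it falls inside the $y_1^{s_1}y_2^{s_2}$ sector. On the exact product this is guaranteed by the explicit Mellin representation and the inditial polynomial $P(\zeta_1,\zeta_2)=P_1(\zeta_1)+P_2(\zeta_2)$; for the perturbed operator one has to argue that the compact perturbation $k$ does not introduce new terms at the walls (essentially because the parametrix error of $\mathcal{R}_{g_0}(\mu)$ for $-\Delta_{g^+}-\mu$ is a compact operator supported away from the walls, and iteration of the resolvent identity preserves the corner asymptotics). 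Outside this technical point, everything else is a straightforward assemblage of Theorem \ref{thm:expansion-of-sol} and standard resolvent theory.
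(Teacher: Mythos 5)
Your overall scheme — build an approximate solution using the exact product metric near the walls, observe the resulting error is compactly supported away from the walls, correct by a resolvent, and check the correction lands in the $y_1^{s_1}y_2^{s_2}$ sector — matches the paper's strategy at a high level, and you correctly identify that the last step is the crux. However, your execution of that step contains a genuine error: you apply the weak resolvent $\mathcal{R}_{g_0}(\mu)=(-\Delta_{g_0}-\mu)^{-1}$ (and its perturbation $\mathcal{R}_{g^+}(\mu)$) to the compactly supported error $\phi$, and then claim that the Mellin analysis around \eqref{inverse-Mellin}--\eqref{equation30} yields leading behaviour $y_1^{s_1}y_2^{s_2}$. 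It does not. For a generic compactly supported $\phi$, the Mellin transform $M_{12}[\phi]$ is entire, and what governs the corner asymptotics of $\mathcal{R}_{g_0}(\mu)\phi$ is the stationary-phase result of Mazzeo--Vasy quoted in the paper: $\mathcal{R}_{g_0}(\mu)\phi\sim y_1^{n_1/2}y_2^{n_2/2}\,e^{-|\gamma|/\rho}$ with $\rho^{-1}=\sqrt{(\log y_1)^2+(\log y_2)^2}$. Writing $t_\alpha=-\log y_\alpha$, this profile is $e^{-|\gamma|\sqrt{t_1^2+t_2^2}}$, whereas $y_1^{s_1}y_2^{s_2}$ corresponds to $e^{-(\gamma_1 t_1+\gamma_2 t_2)}$; these only agree along the single direction $(t_1,t_2)\parallel(\gamma_1,\gamma_2)$, and the ratio $e^{\gamma_1 t_1+\gamma_2 t_2-|\gamma|\sqrt{t_1^2+t_2^2}}$ has a direction-dependent limit at the corner, hence is not a smooth function of $(y_1,y_2)$ up to $\mathcal B_1\times\mathcal B_2$. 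The special $y_1^{s_1}y_2^{s_2}$ output obtained around \eqref{equation30} was specific to the right-hand side $\phi_0$ produced by the three-term approximate solution, whose Mellin transform concentrates at $\zeta_\alpha=-s_\alpha$; that argument does not apply to a general compactly supported $\phi$.

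The paper's proof avoids this trap by choosing a different parametrix: not the weak product resolvent, but the composition $\mathcal{R}_1\mathcal{R}_2$ of the two rank-one Mazzeo--Melrose resolvents (which commute away from $\operatorname{supp}(k)$). Each $\mathcal{R}_i$ maps $\dot{\mathcal C}^\infty(\mathcal{X}_i)$ into $y_i^{s_i}\mathcal{C}^\infty(\mathcal{X}_i)$, so $\mathcal{R}_1\mathcal{R}_2:\dot{\mathcal C}^\infty(\mathcal{X})\to y_1^{s_1}y_2^{s_2}\mathcal{C}^\infty(\mathcal{X})$ by construction, which is precisely the strong decay you need. The error $L(s)\mathcal{R}_1\mathcal{R}_2-\mathrm{Id}=-E(s)$ is supported in $\operatorname{supp}(k)$, hence away from the walls, and is compact on $L^2$; Fredholm theory gives $(\mathrm{Id}-E(s))^{-1}=\mathrm{Id}+E_1(s)$ with $E_1$ again mapping into functions supported in $\operatorname{supp}(k)$. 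The operator $\mathcal{R}=\mathcal{R}_1\mathcal{R}_2+\mathcal{R}_1\mathcal{R}_2E_1$ therefore both inverts $-\Delta_{g^+}-\mu$ (on the relevant data) and preserves the $y_1^{s_1}y_2^{s_2}\mathcal{C}^\infty$ sector. If you replace your invocation of $\mathcal{R}_{g^+}(\mu)$ by this parametrix-and-Fredholm construction with $\mathcal{R}_1\mathcal{R}_2$, your argument becomes correct; also note that the cutoff $\chi$ in step (ii) is unnecessary, since $u_0$ from Lemma \ref{lemma-four-terms2} is defined globally on $\mathcal{X}_1\times\mathcal{X}_2$ and already produces an error $A=(\Delta_{g^+}+\mu)u_0$ supported in $\operatorname{supp}(k)$ because $g^+=g_0$ off that set.
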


\begin{proof}
Let $f$ be a smooth function on $\mathcal B_1\times\mathcal B_2$ and fix $\mu_1,\mu_2$ as above.
Let $u_0$ be the solution to
$$-\Delta_{g_0} u-\mu u=0$$
 given by Lemma \ref{lemma-four-terms}. That is
 $$u_0= u:=y_1^{n_1-s_1}y_2^{n_2-s_2}  F +y_2^{s_2} y_1^{n_1-s_1 }G+y_1^{s_1}y_2^{n_2-s_2}H+y_1^{s_1}y_2^{s_2} I,$$
 where $\mu_i=s_i(n_i-s_i)$.

 We conclude the result by finding a function $v$ such that
 $u=u_0+v$ is a solution to \eqref{full equation in product} and that decays as $y_1^{s_1}y_2^{s_2}$. For this, we write
 $$-\Delta_{g^+} v-\mu v= A(x) ,\quad \text{in }\mathcal X,$$
where $A(x)=\Delta_{g^+} u_0+\mu u_0$. From the choice of $g^+$ we have that  $\textrm{supp }(A)\subset \textrm{ supp }(k)$, which is compact.

 The existence of $v$ will be proved by showing the existence of a resolvent between the correct function spaces that has a prescribed behavior.
To construct the resolvent we will follow  \cite{Mazzeo-Melrose}.

Let  $\mathcal R_1, \, \mathcal R_2$ be the functions defined in the proof of Lemma \ref{lemma-four-terms2}. That is, $\mathcal{R}_i$ is the resolvent in $\mathcal{X}_i$ of the operator $-\Delta_i-s_i(n_i-s_i)$. The mapping properties of $\mathcal{R}_i$ and the definition of the metric imply that for the space of smooth functions that vanish at all orders at infinity $\dot{\mathcal C}^\infty(\mathcal{X})$ the operator $\mathcal{R}_1\mathcal{R}_2:\dot{\mathcal C}^\infty(\mathcal{X})\to y_1^{s_1}y_2^{s_2}\mathcal C^\infty(\mathcal{X}) $ is well defined. Moreover, outside the support of $k$ we have that $\mathcal{R}_1\mathcal{R}_2=\mathcal{R}_2\mathcal{R}_1$.

Let $L(s)=-\Delta_{g^+}-s(n-s)$.
Then for any given $h\in \dot{\mathcal C}^\infty(\mathcal{X}) $ we have that
$$L(s) \mathcal{R}_1\mathcal{R}_2 -Id=-E(s),$$
 where $E(s):\dot{\mathcal C}^\infty(\mathcal{X}) \to \mathcal C_k^\infty(\mathcal{X})$. Here $\mathcal C_k^\infty(\mathcal{X})$ is the set of smooth functions supported in the support of $k$.
 Since  the set $\dot{\mathcal C}^\infty(\mathcal{X})$ is a dense subspace of $L^2(\mathcal{X})$,
 the construction also implies that  $E(s)$ is a $L^2(\mathcal{X})$ compact operator. Applying Fredholm theory we have that $[I-E(s)]$ is invertible and the inverse is meromorphic in $s$. Set $F(s)$ such that
 $$[I-E(s)]^{-1}=Id+E_1(s).$$

 We start by showing that $E_1$ has the correct mapping properties. More precisely $E_1(s):\dot{\mathcal C}^\infty(\mathcal{X}) \to \mathcal C_k^\infty(\mathcal{X}). $
 From the definition we have that
 $$E_1=E+E_1E=E+EE_1.$$

 The first equality implies that $E_1$ maps $\dot{\mathcal C}^\infty(\mathcal{X}) $ into $L^2(\mathcal{X})$, then the second equality implies $\dot{\mathcal C}^\infty(\mathcal{X}) $ is mapped into $\mathcal C^\infty_k (\mathcal{X}) $.

 Now we observe that the operator $\mathcal{R}: \dot{\mathcal C}^\infty(\mathcal{X}) \to y_1^{s_1}y_2^{s_2} \mathcal C^\infty(\mathcal{X}) $ given by
 $$\mathcal{R}= \mathcal{R}_1\mathcal{R}_2+ \mathcal{R}_1\mathcal{R}_2 E_1$$
 is a resolvent for $-\Delta_{g^+}-s(n-s)$ with the desired decay. Now we take $v=\mathcal{R}A$ and $u=u_0+v$ is the solution to \eqref{full equation in product} with the asymptotics of \eqref{expansion-product}.

 \end{proof}

 \begin{remark}
For every $\mu$-harmonic function in $(\mathcal{X}, g^+)$ a construction similar to the previous one can be done.
 In particular, this implies that the Martin boundary agrees with $\partial \mathcal{X}_1\times \partial\mathcal{X}_2 \times S_\mu$ as before.
 \end{remark}

Theorem \ref{thm:expansion2} allows to define the scattering operator in this setting with the same expression \eqref{scattering-definition}. However, as expected, in this general case we have lost the product structure \eqref{scattering-model} of the model, but it is is still a conformally covariant operator.\\

\noindent\textbf{Acknowledgements.}
The authors are grateful to Olivier Biquard,
  Sagun Chanillo, Matias Courdurier, Charles Fefferman, Rafe Mazzeo and Yiannis Sakellaridis
for many insightful discussions.

M. S\'aez was partially supported by Proyecto  Fondecyt  Regular 1150014 and
M.d.M. Gonz\'alez is supported by Spanish government grant MTM2014-52402-C3-1-P, and the BBVA foundation grant for  Investigadores y Creadores Culturales. Both authors would like to acknowledge the support of the  NSF grant DMS-1440140 while both authors were in residence at the Mathematical Sciences Research Institute in Berkeley, CA, during the Spring 2016 semester.

%This material is based upon work supported by the National Science Foundation under Grant No. DMS-1440140while the author was in residence at the Mathematical Sciences Research Institute in Berkeley, California, during the Spring 2016 semester.

%\bibliographystyle{abbrv}

%\bibliography{bibliography,books,mar,rep_theory}

\begin{thebibliography}{10}

\bibitem{Anker-Orsted}
J.-P. Anker, B.~Orsted.
\newblock {\em Lie theory}, volume 229 of {\em Progress in Mathematics}.
\newblock Birkh\"auser Boston, Inc., Boston, MA, 2005.
\newblock Unitary representations and compactifications of symmetric spaces.

\bibitem{Bender-Orszag}
C. Bender, S. Orszag. {\em Advanced mathematical methods for scientists and engineers. I. Asymptotic methods and perturbation theory}. Springer-Verlag, New York, 1999.

\bibitem{Biquard-Mazzeo:parabolic}
O. Biquard, R. Mazzeo.
Parabolic geometries as conformal infinities of Einstein metrics.
{\em Arch. Math. (Brno)} 42 (2006), suppl., 85--104.

\bibitem{Biquard-Mazzeo:products} O. Biquard, R. Mazzeo. A nonlinear Poisson transform for Einstein metrics on product spaces. {\em J. Eur. Math. Soc.} 13, no. 5, 1423--1475,  2011.

\bibitem{Borel:book} A. Borel.
{\em Semisimple groups and Riemannian symmetric spaces}.
Texts and Readings in Mathematics, 16. Hindustan Book Agency, New Delhi, 1998.

\bibitem{Branson:sharp-inequalities}
T.~P. Branson.
\newblock Sharp inequalities, the functional determinant, and the complementary
  series.
\newblock {\em Trans. Amer. Math. Soc.}, 347(10):3671--3742, 1995.

\bibitem{Cabre-Serra} X. Cabre, J. Serra. An extension problem for sums of fractional Laplacians and $1-D$ symmetry of phase transitions. {\em Nonlinear Anal}. 137 (2016), 246--265.

\bibitem{Caffarelli-Silvestre}
L.~Caffarelli, L.~Silvestre.
\newblock An extension problem related to the fractional {L}aplacian.
\newblock {\em Comm. Partial Differential Equations}, 32(7-9):1245--1260, 2007.

\bibitem{Cap-Slovak}
A. C\v{a}p, J. Slov\'ak.
{\em Parabolic geometries. I.
Background and general theory}.
Mathematical Surveys and Monographs, 154. American Mathematical Society, Providence, RI, 2009.

\bibitem{Carron-Pedon}
G.~Carron and E.~Pedon.
\newblock On the differential form spectrum of hyperbolic manifolds.
\newblock {\em Ann. Sc. Norm. Super. Pisa Cl. Sci. (5)}, 3(4):705--747, 2004.

\bibitem{Chang-Gonzalez}
S.-Y. A.~Chang, M.~Gonz{\'a}lez.
\newblock Fractional {L}aplacian in conformal geometry.
\newblock {\em Adv. Math.}, 226(2):1410--1432, 2011.

\bibitem{Conrad}
K. Conrad.
\newblock Decomposing  $SL_2(\mathbb{R})$
\url{http://www.math.uconn.edu/~kconrad/blurbs/grouptheory/SL(2,R).pdf}

\bibitem{Frank-Gonzalez-Monticelli-Tan}
 R. Frank, M.d.M. Gonz\'alez, D. Monticelli, J. Tan.
Conformal fractional Laplacians on the Heisenberg group. {\em Advances in Mathematics}, 270 (2015) 97--137.

\bibitem{Freire} A. Freire.
On the {M}artin boundary of {R}iemannian products.
{\em J. Differential Geom.} 33, no. 1, 215--232, 1991.

\bibitem{Fulton-Harris}
W.~Fulton, J.~Harris.
\newblock {\em Representation theory}, volume 129 of {\em Graduate Texts in
  Mathematics}.
\newblock Springer-Verlag, New York, 1991.
\newblock A first course, Readings in Mathematics.

\bibitem{Furstenberg} H. Furstenberg.
A Poisson formula for semi-simple Lie groups.
{\em Ann. of Math}. (2) 77, 335--386,  1963.

\bibitem{Gangolli:spherical}
R.~Gangolli.
\newblock Spherical functions on semisimple {L}ie groups.
\newblock In {\em Symmetric spaces ({S}hort {C}ourses, {W}ashington {U}niv.,
  {S}t. {L}ouis, {M}o., 1969--1970)}, pages 41--92. Pure and Appl. Math., Vol.
  8. Dekker, New York, 1972.

\bibitem{Gindikin-Karpelevi}
S.~G. Gindikin and F.~I. Karpelevi{\v{c}}.
\newblock Plancherel measure for symmetric {R}iemannian spaces of non-positive
  curvature.
\newblock {\em Dokl. Akad. Nauk SSSR}, 145:252--255, 1962.

\bibitem{Gonzalez:survey}
M.d.M. Gonz\'alez.
Recent progress on the fractional Laplacian in conformal geometry.
Preprint.

\bibitem{Graham-Zworski} C.R. Graham, M. Zworski.
Scattering matrix in conformal geometry.
{\em Invent. Math. 152} (2003), no. 1, 89--118.

\bibitem{GJT}
Y.~Guivarc'h, L.~ Ji, J.C.~ Taylor.
\newblock Compactifications of symmetric spaces.
\newblock {\em Progress in Mathematics, } 156. Birkhuser Boston, Inc., Boston, MA, 1998.

\bibitem{Giulini-Woess} S. Giulini, W. Woess.
\newblock The Martin compactification of the Cartesian product of two hyperbolic spaces.
\newblock {\em J. Reine Angew. Math}. 444:17--28, 1993.

\bibitem{Harish-Chandra:sphericalI}
Harish-Chandra.
\newblock Spherical functions on a semisimple {L}ie group. {I}.
\newblock {\em Amer. J. Math.}, 80:241--310, 1958.

\bibitem{Harish-Chandra:sphericalII}
Harish-Chandra.
\newblock Spherical functions on a semisimple {L}ie group. {II}.
\newblock {\em Amer. J. Math.}, 80:553--613, 1958.

\bibitem{Helgason:duality}
S.~Helgason.
\newblock A duality for symmetric spaces with applications to group
  representations.
\newblock {\em Advances in Math.}, 5:1--154 (1970), 1970.

\bibitem{Helgason:jewel}
S.~Helgason.
\newblock Harish-{C}handra's {$c$}-function. {A} mathematical jewel.
\newblock In {\em Noncompact {L}ie groups and some of their applications ({S}an
  {A}ntonio, {TX}, 1993)}, volume 429 of {\em NATO Adv. Sci. Inst. Ser. C Math.
  Phys. Sci.}, pages 55--67. Kluwer Acad. Publ., Dordrecht, 1994.

\bibitem{Helgason:libro3}
S.~Helgason.
\newblock {\em Groups and geometric analysis}, volume~83 of {\em Mathematical
  Surveys and Monographs}.
\newblock American Mathematical Society, Providence, RI, 2000.
\newblock Integral geometry, invariant differential operators, and spherical
  functions, Corrected reprint of the 1984 original.

\bibitem{Helgason:libro2}
S.~Helgason.
\newblock {\em Differential geometry, {L}ie groups, and symmetric spaces},
  volume~34 of {\em Graduate Studies in Mathematics}.
\newblock American Mathematical Society, Providence, RI, 2001.
\newblock Corrected reprint of the 1978 original.

\bibitem{Helgason:libro}
S.~Helgason.
\newblock {\em Geometric analysis on symmetric spaces}, volume~39 of {\em
  Mathematical Surveys and Monographs}.
\newblock American Mathematical Society, Providence, RI, second edition, 2008.

\bibitem{KoriyaneiHelgasonboundary} S. Helgason, A. Kor\'anyi. A Fatou-type theorem for harmonic functions on symmetric spaces.  {\em Bull. Amer. Math. Soc.}  74  (1968) 258--263.


\bibitem{Huang:thesis}
H.-C. Huang.
{\em The Resolvent and Scattering Operator on Near-Product-Hyperbolic Spaces}. Phd Thesis. Stanford University. 2006.

\bibitem{Joshi-SaBarreto} M. Joshi, A. S\'a Barreto. Inverse scattering on asymptotically hyperbolic manifolds.
{\em Acta Mathematica}, 184, 41--86, (2000).



 \bibitem{Baum-Juhl} H. Baum, A. Juhl. {\em Conformal differential geometry. $Q$-curvature and conformal holonomy}. Oberwolfach Seminars, 40. Birkhäuser Verlag, Basel, 2010.

\bibitem{Karpelevic}
F.~Karpelevic.
\newblock The geometry of geodesics and the eigenfunctions of the Beltrami-Laplace operator on symmetric spaces.
\newblock {\em Trans. Moscow Math. Soc}. 14:51--199, 1965.


\bibitem{KKMOOT}
M.~Kashiwara, A.~Kowata, K.~Minemura, K.~Okamoto, T.~{\=O}shima, and M.~Tanaka.
\newblock Eigenfunctions of invariant differential operators on a symmetric
  space.
\newblock {\em Ann. of Math. (2)}, 107(1):1--39, 1978.

\bibitem{Kashiwara-Oshima:regular-singularities}
M.~Kashiwara and T.~{\=O}shima.
\newblock Systems of differential equations with regular singularities and
  their boundary value problems.
\newblock {\em Ann. of Math. (2)}, 106(1):145--200, 1977.

\bibitem{Knapp}
A.~W. Knapp.
\newblock {\em Lie groups beyond an introduction}, volume 140 of {\em Progress
  in Mathematics}.
\newblock Birkh\"auser Boston Inc., Boston, MA, second edition, 2002.


\bibitem{Knappboundary}  A. Knapp.
Fatou's theorem for symmetric spaces. I.
{\em Ann. of Math.} (2)  88  (1968) 106--127.


\bibitem{Koriyaniboundary1} A. Kor\'anyi. Boundary behavior of Poisson integrals on symmetric spaces. {\em Trans. Amer. Math. Soc.}  140  (1969) 393--409.

\bibitem{Koriyaniboundary2} A. Kor\'anyi.
Poisson integrals and boundary components of symmetric spaces. {\em Invent. Math.}  34  (1976), no. 1, 19--35.

%\bibitem{Koornwinder}
%T. Koornwinder
%\newblock Jacobi functions and analysis on noncompact semisimple Lie groups.
%\newblock {\em Special functions: group theoretical aspects and applications}, 1--85, Math. Appl., Reidel, Dordrecht, 1984.

\bibitem{Koranyi:survey} A. Kor\'anyi.
A survey of harmonic functions on symmetric spaces. Harmonic analysis in Euclidean spaces (Proc. Sympos. Pure Math., Williams Coll., Williamstown, Mass., 1978), Part 1, pp. 323344,
Proc. Sympos. Pure Math., XXXV, Part, Amer. Math. Soc., Providence, R.I., 1979.

\bibitem{Martin}
R. Martin.
\newblock Minimal positive harmonic functions.
\newblock {\em Trans. Amer. Math. Soc.} 49:137--172, 1941.

\bibitem{Mazzeo-Melrose} R. Mazzeo, R. Melrose. Meromorphic extension of the resolvent on complete spaces with asymptotically constant negative curvature. {\em J. Funct. Anal.}, 75(2):260--310, 1987.

\bibitem{Mazzeo-Vasy:products}
 R. Mazzeo, A. Vasy. Resolvents and Martin boundaries of product spaces. {\em Geom. Funct. Anal.} 12, no. 5, 1018--1079,  2002.

\bibitem{Mazzeo-Vasy:SL3} R. Mazzeo, A. Vasy. Analytic continuation of the resolvent of the Laplacian on $SL(3)/SO(3)$. {\em Amer. J. Math.} 126, no. 4, 821--844, 2004.

  \bibitem{Mazzeo-Vasy:SL3bis}
  R. Mazzeo, A. Vasy. Scattering theory on $SL(3)/SO(3)$: connections with quantum 3-body scattering. {\em Proc. Lond. Math. Soc.} (3) 94, no. 3, 545--593, 2007.

\bibitem{Mazzeo-Vasy:noncompact} R. Mazzeo, A. Vasy. Analytic continuation of the resolvent of the Laplacian on symmetric spaces of noncompact type. {\em J. Funct. Anal}. 228 (2005), no. 2, 311368.

\bibitem{Michelson} H. L. Michelson. Fatou theorems for eigenfunctions of the invariant differential operators on symmetric spaces. {\em Trans. Amer. Math. Soc.}  177  (1973), 257--274.

\bibitem{Oshima:polos} T. Oshima. Boundary value problems for systems of linear partial differential equations with regular singularities. {\em Group representations and systems of differential equations (Tokyo, 1982)}, 391--432, Adv. Stud. Pure Math., 4, North-Holland, Amsterdam, 1984.

\bibitem{Oshima:commuting}
T. Oshima.
Commuting differential operators with regular singularities. {\em Algebraic analysis of differential equations from microlocal analysis to exponential asymptotics},  195–224, Springer, Tokyo, 2008.

\bibitem{Oshima-Sekiguchi}
T. \={O}shima, J. Sekiguchi.
Eigenspaces of invariant differential operators on an affine symmetric space.
Invent. Math. 57, no. 1, 1--81, 1980.

\bibitem{Sato-Kawai-Kashiwara}
M. Sato, T. Kawai, M. Kashiwara.
\newblock    Microfunctions and pseudo-differential equations,
 {\em Hyperfunctions and pseudo-differential equations ({P}roc.
              {C}onf., {K}atata, 1971; dedicated to the memory of {A}ndr\'e
              {M}artineau)},
\newblock      pp 265--529. Lecture Notes in Math., Vol. 287,
 {\it Springer, Berlin}, 1973.


\bibitem{Schlichtkrull} H. Schlichtkrull.
{\em Hyperfunctions and harmonic analysis on symmetric spaces}.
Progress in Mathematics, 49. Birkh\"auser Boston, Inc., Boston, MA, 1984.

\bibitem{Schlichtkrull:paper} H. Schlichtkrull. On the boundary behaviour of generalized Poisson integrals on symmetric spaces. {\em Trans. Amer. Math. Soc.}  290  (1985),  no. 1, 273--280.

\bibitem{Sjoren} P. Sj\"ogren. A Fatou theorem for eigenfunctions of the Laplace-Beltrami operator in a symmetric space. {\em Duke Math.} J.  51  (1984),  no. 1, 47--56.

 \bibitem{Sugiura} M. Sugiura.
{\em Unitary representations and harmonic analysis. An introduction}. North-Holland Mathematical Library, 44. North-Holland Publishing Co., Amsterdam; Kodansha, Ltd., Tokyo, 1990.


\end{thebibliography}

\end{document}